\theoremstyle{plain}
\newtheorem{thm}{Theorem}[section]
\newtheorem{prop}[thm]{Proposition}
\newtheorem{lm}[thm]{Lemma}
\newtheorem{cor}[thm]{Corollary}
\newtheorem{ques}[thm]{Question}
\newtheorem{clm}{Claim}
\newtheorem{clmA}{Claim}
\theoremstyle{definition}
\newtheorem{rmk}[thm]{Remark}
\newtheorem{alg}[thm]{Algorithm}
\DeclareMathOperator{\impart}{Im}
\DeclareMathOperator{\hull}{hull}
\DeclareMathOperator{\Int}{Int}
\DeclareMathOperator{\bolda}{\mathbf{a}}
\DeclareMathOperator{\boldb}{\mathbf{b}}
\definecolor{cof}{RGB}{219,144,71}
\definecolor{pur}{RGB}{186,146,162}
\definecolor{greeo}{RGB}{91,173,69}
\definecolor{greet}{RGB}{52,111,72}
\pgfplotsset{compat=1.14}
\titlespacing*{\paragraph}{0pt}{0pt}{1em}
\numberwithin{equation}{section}
\title{\bfseries Train tracks, entropy, and the halo of a measured lamination}
\author{Tina Torkaman}
\author{Yongquan Zhang}
\affil{\small Department of Mathematics, Harvard University}
\begin{document}

\maketitle

\begin{abstract}
Let $\mathcal{L}$ be a measured geodesic lamination on a complete hyperbolic surface of finite area. Assuming $\mathcal{L}$ is not a multicurve, our main result establishes the existence of a geodesic ray which has finite intersection number with $\mathcal{L}$ but is not asymptotic to any leaf of $\mathcal{L}$ nor eventually disjoint from $\mathcal{L}$. In fact, we show that the endpoints of such rays, when lifted to the universal cover $\mathbb{H}^2$ of $X$, give an uncountable set $h\tilde{\mathcal{L}}\subset S^1$ (called the \emph{halo} of $\tilde{\mathcal{L}}$), which is disjoint from the endpoints of leaves of the lifted lamination $\tilde{\mathcal{L}}$.

\end{abstract}
\begin{center}
\begin{minipage}{0.9\linewidth}
    \tableofcontents
\end{minipage}
\end{center}
\thispagestyle{empty}
\newpage

\clearpage
\pagenumbering{arabic} 
\section{Introduction}

In this paper we investigate the intersection of geodesic rays with a given measured geodesic lamination $\mathcal{L}$ on a hyperbolic surface $X$ of finite area. Our main result establishes the existence of exotic rays; these rays have the unexpected property that they have finite intersection number with $\mathcal{L}$, even though they meet $\mathcal{L}$ infinitely often. We are motivated by applications to bending laminations and hyperbolic 3-manifolds, described below.
%which is a geodesic ray with finite intersection number with the lamination, but neither asymptotic to any leaf nor disjoint from the lamination.We are motivated by applications to bending laminations and hyperbolic 3-manifolds, which we will describe later.

\paragraph{Exotic rays.}
Let $X:=\Gamma\backslash\mathbb{H}^2$ be a complete hyperbolic surface of finite area, and $\mathcal{L}$ a measured geodesic lamination on $X$. By a geodesic ray on $X$ we mean a geodesic isometric immersion $r:[0,\infty)\to X$.

Define the \emph{intersection number} $I({\mathcal{L}},r)$ as the transverse measure of $r$ with respect to $\mathcal{L}$. For almost every ray $r$, $I({\mathcal{L}},r)$ is infinite (see Theorem~\ref{thm:linear}). On the other hand, $I({\mathcal{L}},r)$ is finite when:

%We say two geodesic rays $r_1$ and $r_2$ are \emph{asymptotic} if there exists a constant $t_0$ so that the hyperbolic distance $d(r_1(t),r_2(t+t_0))\to 0$ as $t\to\infty$. Equivalently, some lifts of the two geodesic rays to the universal cover $\tilde X\cong\mathbb{H}^2$ share the same endpoint at infinity. 

%The main focus of the paper is the following question: does there exist a geodesic ray $r$ so that intersection number of $r$ with $\mathcal{L}$ is finite? We have the following two natural cases:
\begin{itemize}[topsep=0mm, itemsep=0mm]
    \item $r$ is asymptopic to a leaf of $\mathcal{L}$ (see Proposition~\ref{prop:point_at_infinity}); or
    \item $r$ is eventually disjoint from $\mathcal{L}$.
\end{itemize}
We call a ray \emph{exotic for $\mathcal{L}$} if $I(\mathcal{L},r)$ is finite but it belongs to neither of these cases. Our main question is then the following: given a measured geodesic lamination $\mathcal{L}$, does there exist an exotic ray for $\mathcal{L}?$

If $\mathcal{L}$ is a multicurve, i.e.~
$
\mathcal{L}=a_1\gamma_1+\dots+a_k\gamma_k
$
where $a_i \in \mathbb{R}$ and $\gamma_i$'s are simple closed geodesics, then there is no exotic ray; any ray with $I(r,\mathcal{L})<\infty$ must eventually be disjoint from $\mathcal{L}$. Our main result addresses the remaining cases: 

%Our main result concerns geodesic rays that have finite intersection with $\mathcal{L}$. . If $\mathcal{L}$ contains a minimal component which is not a multicurve, any geodesic ray asymptotic to a leaf of that component has finite intersection number with $\mathcal{L}$ (see Proposition~\ref{prop:point_at_infinity}). A geodesic ray is said to be \emph{exotic} if it has finite intersection with $\mathcal{L}$, but is not asymptotic to any leaf of $\mathcal{L}$ nor eventually disjoint from $\mathcal{L}$. We have
%, i.e. a geodesic ray $r$ with $\mu_r(\infty):=\lim_{t\to\infty}\mu_r(t)<\infty$, or equivalently, $\mu_r\asymp 0$ or $1$. If $r$ is eventually disjoint from $\mathcal{L}$, $\mu_r(\infty)$ is obviously finite. Its total transverse measure is also finite if $r$ is asymptotic to a leaf $l$ of $\mathcal{L}$ (see Prop.~\ref{prop:point_at_infinity}). A geodesic ray $r$ is called \emph{exotic} (for $\mathcal{L}$) if it has finite total transverse measure and is neither eventually disjoint from $\mathcal{L}$ nor asymptotic to a leaf of $\mathcal{L}$. We have
\begin{thm} \label{main1}
Provided that $\mathcal{L}$ is not a multicurve, there exists an exotic ray for $\mathcal{L}$.
\end{thm}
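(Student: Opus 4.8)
The plan is to reduce to a minimal sublamination and then build an uncountable Cantor set of endpoints in $S^1$ directly. First I would dispose of the easy reduction: since $\mathcal{L}$ is not a multicurve, it contains a minimal component $\mathcal{L}_0$ that is not a simple closed geodesic, and $\mathcal{L}_0$ fills some (possibly all of $X$) subsurface $S_0$. It suffices to produce a ray $r$ that stays in $S_0$, has $I(\mathcal{L}_0,r)<\infty$, keeps crossing $\mathcal{L}_0$ forever, and is not asymptotic to a leaf of $\mathcal{L}_0$: such an $r$ is disjoint from the other components of $\mathcal{L}$ (they lie outside $S_0$), so $I(\mathcal{L},r)=I(\mathcal{L}_0,r)<\infty$, it is not eventually disjoint from $\mathcal{L}$, and one checks it is asymptotic to no leaf of $\mathcal{L}$ at all. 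Thus the whole problem concentrates on the minimal, filling case.

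Next I would reformulate everything in $\mathbb{H}^2$. Fix a basepoint $o$ and let $\tilde{\mathcal{L}}_0$ be the lift, with transverse measure $\mu$ on the space of leaves and leaf endpoints $E\subset S^1$. For a ray $r_\xi$ from $o$ to $\xi\in S^1$, the leaves it meets are exactly those separating $o$ from $\xi$, so
$$ I(\mathcal{L}_0,r_\xi)\;=\;\mu\bigl(\{\,\ell : \ell\text{ separates }o\text{ from }\xi\,\}\bigr). $$
Because $\mathcal{L}_0$ is minimal and non-closed, $\overline{E}$ is a Cantor set and $\mu$ is non-atomic. A ray $r_\xi$ is asymptotic to a leaf exactly when $\xi\in E$ (shared endpoint), and it is eventually disjoint from $\mathcal{L}_0$ exactly when $\xi$ lies in a complementary gap. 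Hence the exotic rays are precisely the $\xi\in\overline{E}\setminus E$ lying in no gap, with the displayed measure \emph{finite}; dually these are the finite-distance ends of the $\mathbb{R}$-tree $T$ dual to $\tilde{\mathcal{L}}_0$ that are not leaf directions. The target is then: show this set is uncountable (nonempty suffices for Theorem~\ref{main1}).

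To construct such $\xi$ I would carry $\mathcal{L}_0$ by a recurrent train track $\tau$ and use its tie neighborhood $N(\tau)$, whose bands carry sub-measures that are the transverse measures $\mu$ of the corresponding families of leaves. Passing to deeper and deeper splittings of $\tau$ organizes the leaves into a rooted tree of nested bands $B_{\emptyset}\supset B_{s_1}\supset B_{s_1s_2}\supset\cdots$, where at each switch offering a genuine choice the band splits into $\ge 2$ sub-bands; the non-multicurve/filling hypothesis guarantees that such branching recurs infinitely often along every branch. I would then build $\xi$ as the nested intersection of the arcs cut off by a chosen infinite train path, letting the path make the small transverse crossing into a thinner sub-band at each branching switch. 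The accumulated intersection number is the sum of the widths $w_n$ of the bands crossed, so the whole scheme works provided one can choose paths along which $\sum_n w_n<\infty$ while still retaining $2^{\aleph_0}$ admissible paths; removing the countably many $\xi$ that happen to land in $E$ or in a gap leaves an uncountable halo, each point of which gives an exotic ray.

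The hard part will be exactly this quantitative tension, and this is where entropy enters. I expect one must show that along suitably chosen train paths the crossed widths $w_n$ decay \emph{geometrically} rather than merely tending to $0$: the Perron--Frobenius/transition data of the splitting sequence provide a definite contraction factor (governed by the exponential growth rate, i.e.\ the entropy, of admissible train paths), forcing summability, while the \emph{same} exponential growth supplies the uncountable family of distinct paths and hence distinct limit points $\xi$. Balancing these two uses of positive entropy — enough contraction for $\sum w_n<\infty$, enough branching that infinitely many of the resulting $\xi$ avoid $E$ and the gaps (so the rays keep crossing and are asymptotic to no leaf) — is the main obstacle, and I would expect the bulk of the work to go into making the width-decay estimate uniform along a positive-entropy subfamily of paths.
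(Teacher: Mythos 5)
Your reduction to a minimal component and your overall scheme (follow the lamination in a train-track/band picture, make small transverse jumps, keep $\sum_n w_n<\infty$, vary the choices to get uncountably many endpoints) match the paper's construction in spirit, but the proposal has a genuine gap at the single hardest point: certifying that the constructed rays are not asymptotic to any leaf. You dispose of this by ``removing the countably many $\xi$ that happen to land in $E$ or in a gap,'' but $E$ is \emph{uncountable} (a minimal lamination that is not a closed geodesic has a Cantor set of leaves) and dense in $\overline{E}$, and every point your nested-band construction produces lies in $\overline{E}$. Moreover, a ray asymptotic to a leaf can perfectly well cross leaves infinitely often and still has finite intersection number (this is exactly why exotic rays are delicate to produce; cf.\ Proposition~\ref{prop:point_at_infinity}), so finiteness of $\sum_n w_n$ together with ``infinitely many crossings'' does not rule out that all, or uncountably many, of your limit points land in $E$. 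Nothing in your construction separates the constructed paths from the leaves themselves, which are infinite paths through the very same nested band structure.

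The paper fills exactly this hole with a symbolic mechanism that your outline lacks: a finite word is \emph{inadmissible} if it occurs in the word of no leaf; comparing Proposition~\ref{prop:ent.poly} (admissible words grow polynomially) with Corollary~\ref{cor:ent.exp} (closed train paths through a fixed small branch grow exponentially) shows inadmissible words exist, and Corollary~\ref{cor:inadmissible} produces them with transverse measure $<2^{-n}$, because the only new crossing occurs over a branch of the fine track $\tau_n$. Inserting such a word into every tail of the constructed ray, Theorem~\ref{thm:tail2} (two train paths converge at infinity if and only if their words share a tail) guarantees the ray converges to no leaf endpoint. Note also that your intended use of entropy is misdirected: you invoke Perron--Frobenius contraction along a splitting sequence to force geometric decay of the widths $w_n$, but no such uniform contraction is available for a general (e.g.\ non-uniquely-ergodic) minimal lamination, and none is needed --- summability comes for free from the approximating tracks $\tau_n$ with branch weights $<2^{-n}$. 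Where entropy genuinely enters is the zero-versus-positive entropy gap that manufactures inadmissible words, i.e.\ precisely the non-asymptoticity certificate missing from your argument.
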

%In contrast, if a geodesic ray $r$ has finite transverse measure with respect to a purely atomic measured lamination $\mathcal{L}$, it is eventually disjoint from $\mathcal{L}$, as each intersection increases the measure by a definite amount. We remark that exotic rays in our construction still ``tend to" the lamination, in the sense that points on the ray are getting closer and closer to $\mathcal{L}$; see e.g. Lemma~\ref{lm:close}.
\paragraph{The halo of a measured lamination.}
 We now put Theorem~\ref{main1} into a broader context. Let $\mathcal{M}$ is a measured geodesic lamination on $\mathbb{H}^2$. Let $\partial\mathcal{M}\subset S^1$ be the set of end points of geodesics in $\mathcal{M}$. We can define \emph{exotic rays} of $\mathcal{M}$ as above. The \emph{halo} of $\mathcal{M}$, denoted by $h\mathcal{M}$, is the set of end points of exotic rays for $\mathcal{M}$. By definition, $h\mathcal{M}\cap \partial\mathcal{M}=\varnothing$.

We then have the following stronger version of Theorem~\ref{main1}:
\begin{thm}\label{mainprime}
Let $\mathcal{L}$ be a measured geodesic lamination on a complete hyperbolic surface $X=\Gamma\backslash\mathbb{H}^2$ of finite area, and $\tilde{\mathcal{L}}$ its lift to $\mathbb{H}^2$. Then the halo $h\widetilde{\mathcal{L}}$ is either empty or uncountable. Moreover, it is uncountable if and only if $\mathcal{L}$ is not a multicurve.
\end{thm}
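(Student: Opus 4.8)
The plan is to reduce the theorem to two implications, since every measured lamination either is or is not a multicurve: first, that $\mathcal{L}$ being a multicurve forces $h\tilde{\mathcal{L}}=\varnothing$, and second, that $\mathcal{L}$ not being a multicurve forces $h\tilde{\mathcal{L}}$ to be uncountable. Together these yield both the stated equivalence and the ``empty or uncountable'' dichotomy, since each case lands in exactly one alternative (so the halo can never be finite-nonzero or countably infinite). The first implication is already recorded in the discussion preceding Theorem~\ref{main1}: when $\mathcal{L}=a_1\gamma_1+\dots+a_k\gamma_k$, every ray with $I(\mathcal{L},r)<\infty$ is eventually disjoint from $\mathcal{L}$, so no ray is exotic and the halo is empty. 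The entire content therefore lies in the second implication.

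For the second implication, Theorem~\ref{main1} already produces one exotic ray when $\mathcal{L}$ is not a multicurve, and I would upgrade this to a Cantor set of exotic rays with pairwise distinct endpoints. The construction I have in mind realizes an exotic ray as a concatenation of geodesic arcs guided by a train track $\tau$ carrying a minimal component $\mathcal{L}_0$ of $\mathcal{L}$ that is \emph{not} a single closed geodesic (such a component exists precisely because $\mathcal{L}$ is not a multicurve). In this encoding the three defining properties of an exotic ray become combinatorial conditions on the coding path: finite intersection number corresponds to the successive arcs crossing leaves of summably decaying transverse measure; ``not eventually disjoint'' and ``not asymptotic to a leaf'' correspond to the coding path being infinite and recurrent rather than eventually trapped alongside a single leaf.

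The heart of the argument is to exhibit, at infinitely many stages of this construction, a genuine branching: two distinct admissible continuations (two branches of $\tau$, or two nearby leaves of $\mathcal{L}_0$ to shadow) each of which preserves all three exotic conditions while adding only a small, summable increment to the intersection number. Granting this, I would define an injection $\{0,1\}^{\mathbb{N}}\hookrightarrow h\tilde{\mathcal{L}}$ by letting a binary sequence record, at each branching stage, which continuation is taken. Each sequence yields a bona fide geodesic ray (the guiding arcs converge by the uniform geometric control of the train-track neighborhood), exotic by construction. Injectivity on endpoints follows because two sequences first differing at stage $n$ produce rays whose tails are separated by a definite gap in $\tau$, hence eventually lie on opposite sides of a fixed bi-infinite geodesic in $\mathbb{H}^2$; since that geodesic separates $S^1$ into two arcs, the two rays limit to distinct boundary points. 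As $\{0,1\}^{\mathbb{N}}$ is uncountable, so is $h\tilde{\mathcal{L}}$.

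The main obstacle is the tension between branching and the intersection-number budget: every alternative continuation must cross some of $\mathcal{L}_0$, so branching costs transverse measure, and one must place the branchings deep enough in the thin part of the lamination that the cost at the $n$-th stage is summable (for instance geometric in the combinatorial depth along $\tau$). This is exactly where I would invoke the quantitative train-track and entropy estimates developed earlier in the paper, which convert combinatorial depth along $\tau$ into exponentially small transverse measure. A secondary difficulty is ensuring that the boundary separation of distinct branches genuinely survives to $S^1$, so that no two branch sequences collapse to the same endpoint; here the Cantor-set transverse structure of the minimal non-closed component $\mathcal{L}_0$, together with the uniform separation of admissible branches, should guarantee that distinct combinatorial paths have distinct endpoints.
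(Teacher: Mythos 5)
Your skeleton matches the paper's proof almost exactly: the same reduction (multicurve $\Rightarrow$ empty halo; otherwise pass to a nonatomic minimal component and a train track carrying it), the same construction of exotic rays by concatenating train-track segments containing inadmissible words with summably small transverse measure, and the same idea of using a Cantor set of combinatorial choices to force uncountability. The genuine gap is in your injectivity step. The map $\{0,1\}^{\mathbb{N}}\hookrightarrow h\tilde{\mathcal{L}}$ you describe is in fact \emph{not} injective: in this kind of construction the alternative continuations at stage $n$ start and end on the same branch (in the paper, every segment $l_n$ begins and ends at $b_n$), so two binary sequences that differ at stage $n$ but agree at all later stages produce rays whose coding words share a tail; by Theorem~\ref{thm:tail2} such rays converge to the same point at infinity for suitable lifts, hence contribute the same endpoints. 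Your geometric justification --- that rays first differing at stage $n$ ``eventually lie on opposite sides of a fixed bi-infinite geodesic'' --- is precisely what fails: two train paths that diverge in $\tilde\tau$ can a priori reconverge at infinity, and ruling this out is the hard direction of Theorem~\ref{thm:tail2}, proved via a nontrivial bigon argument, not a consequence of a ``definite gap in $\tau$'' at one stage. The correct conclusion is weaker: distinct endpoints can only be expected for sequences with distinct \emph{tails}, and one must then observe (as the paper does) that each tail-equivalence class of sequences is countable, so uncountably many classes remain.

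Even after this repair a second issue remains, which the paper addresses and you do not: one must show that sequences with different tails actually yield \emph{words} with different tails. If the inadmissible blocks had special combinatorial structure (for instance, were all powers of a common subword), two different infinite concatenations could coincide as words after some point, collapsing endpoints. The paper forecloses this quantitatively: after passing to a subsequence one arranges $I(\mathcal{L},l_n)/3>\sum_{k>n}I(\mathcal{L},l_k)$, so the transverse measure of any tail of the concatenated ray determines which blocks appear in it, and hence different sequence tails force different word tails; combined with Theorem~\ref{thm:tail2} this gives uncountably many pairwise non-asymptotic exotic rays, hence uncountably many halo points. So your proposal becomes a proof once you (i) replace the false injectivity claim by the countable-tail-class argument, and (ii) add a separation mechanism, such as the geometric-decay condition on transverse measures, guaranteeing distinct word tails; the separation-by-a-geodesic argument should be discarded in favor of Theorem~\ref{thm:tail2}.
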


\paragraph{Scheme for constructing an exotic ray.}
We now give a brief description of the construction in the proof of the main result. The key challenge is to find a way of determining whether $r$ is asymptotic to a leaf of $\mathcal{L}$, while keeping track of $I(r,\mathcal{L})$. For this, we proceed as follows:
\begin{enumerate}[topsep=0mm, itemsep=0mm]
    \item First, we assign an infinite symbolic word to each geodesic ray. This is done using train tracks carrying $\mathcal{L}$ (see \S\ref{sec:entropy}), then two rays are asymptotic if and only if their corresponding words have the same tail (see Theorem~\ref{thm:tail2}).
    \item Next, we show that there are many \emph{inadmissible words}, which are finite words not contained in the words of any leaves. This follows from the fact that entropy of admissible words is zero, although the entropy of all words is positive (Proposition~\ref{prop:ent.poly}). 
    
    %the words for leaves of $\tilde{\mathcal{L}}$ make up a small portion of all infinite words by comparing their entropy (Proposition~\ref{prop:ent.poly}); 
    %in fact, the words for leaves avoid many finite segments which we call \emph{inadmissible words} (\S\ref{sec:general_proof}, proof of Theorem~\ref{mainprime}).
    \item We then show that there exist geodesic segments represented by inadmissible words with arbitrarily small transverse measure. By concatenating inadmissible words of smaller and smaller transverse measure we obtain a piecewise geodesic ray with finite transverse measure.
    %In particular, given any sequence of positive numbers $(\epsilon_k)$, there exist inadmissible words $(w_k)$ so that a representative arc of $w_k$ has transverse measure $<\epsilon_k$ (\S\ref{sec:general_proof}, proof of Theorem~\ref{mainprime}).
    \item Finally, we show that the geodesic representative of the constructed ray also has finite transverse measure (Proposition~\ref{prop:point_at_infinity}), and is indeed exotic. 
    %Finally, by concatenating inadmissible words (and connecting words of negligible transverse measure if necessary) $w_1\cdots w_2\cdots w_3\cdots$, we obtain an exotic ray by insuring that $\sum\epsilon_k<\infty$. We call the infinite word obtained in this step an \emph{exotic word}.
\end{enumerate}

\paragraph{The case of a punctured torus.}
We now describe the special case where $X$ is a punctured torus. For this case, we do not need to explicitly use train tracks or entropy in our construction.

Let $X:=\Gamma\backslash\mathbb{H}^2$ be a complete hyperbolic torus with one puncture. A measured lamination on $X$ is determined up to scale by its \emph{slope} $\theta\in\mathbb{P}H^1(X,\mathbb{R})\cong\mathbb{RP}^1$. Multicurves are identified with the rational points $\mathbb{QP}^1$.

Choose an ideal quadrilateral $Q$ in $\mathbb{H}^2$ as a fundamental domain for $\Gamma$, so that $\bolda, \boldb$ are two hyperbolic elements identifying the opposite sides of $Q$. Then $\Gamma$ is the free group $\langle \bolda,\boldb\rangle$. Note that the orbit of $Q$ under $\Gamma$ tiles $\mathbb{H}^2$. Label each tile $gQ$ by $g\in\Gamma$; see Figure~\ref{fig:symbolic_coding}. For any geodesic ray $r$ in $\mathbb{H}^2$ based in $Q$ and ending at a point $z$ that is not a cusp, there is a unique infinite reduced word $g=g_1g_2g_3\cdots$ in the generators $\bolda,\boldb,\overline{\bolda}=\bolda^{-1},\overline{\boldb}=\boldb^{-1}$ so that $g_1g_2\cdots g_nQ\to z$. In fact, $Q_n=g_1\cdots g_nQ$ is the sequence of tiles that $r$ passes through.

\begin{figure}[ht!]
    \centering
    \includegraphics[trim={0cm 0.5cm 0cm 0.5cm}, clip, scale=0.75]{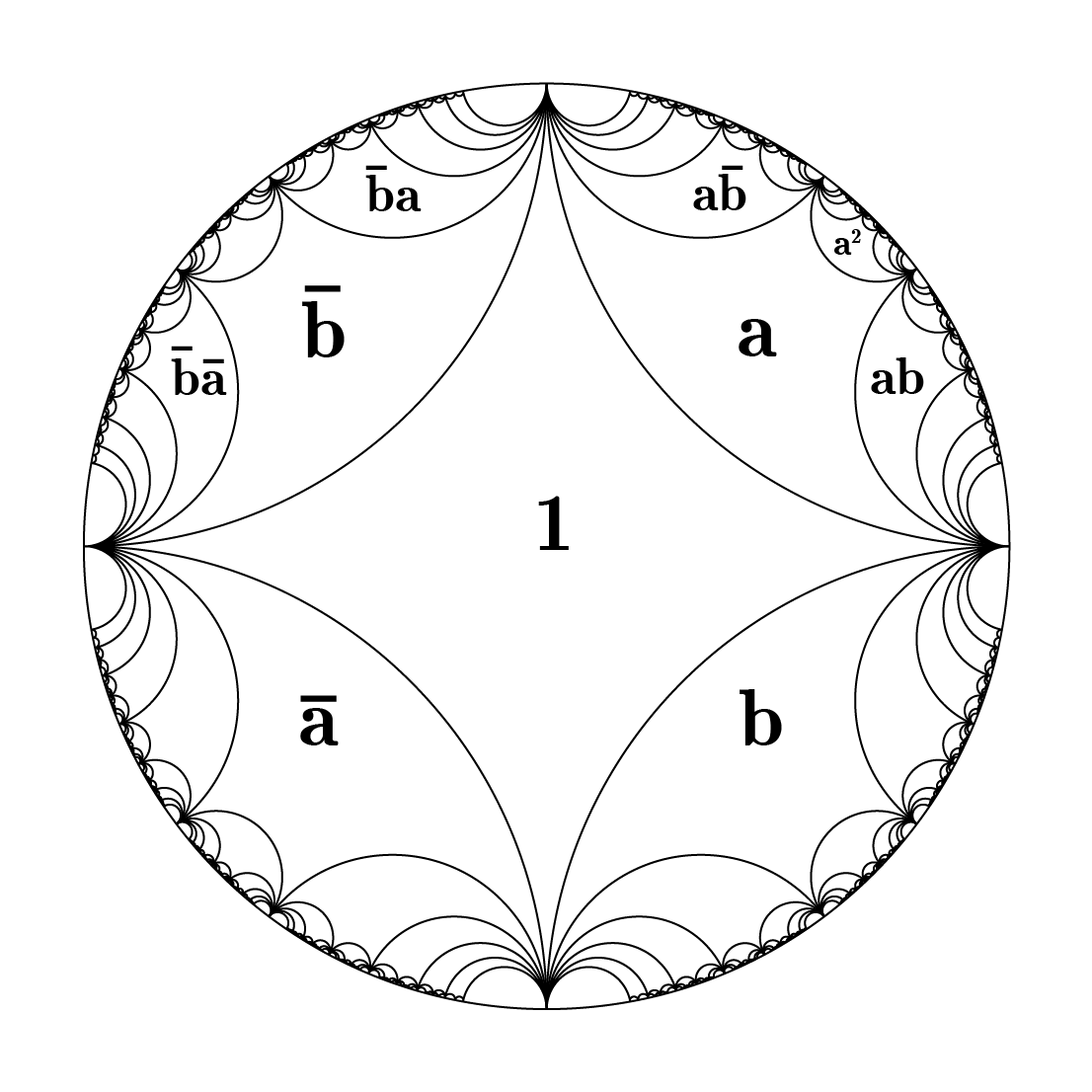}
    \caption{Symbolic coding with the quadrilateral $Q$ labelled $1$}
    \label{fig:symbolic_coding}
\end{figure}

Given a geodesic ray on $X$, there is a unique lift to $\mathbb{H}^2$ with the base point in $Q$ (if it is based at an edge of $Q$, we always choose the edge to be the one shared with $\bolda Q$ or $\boldb Q$). It is easy to see that two geodesic rays are asymptotic on $X$ if and only if the words obtained above for their lifts have the same tail.

Bi-infinite words coming from leaves of a measured lamination of $X$, called \emph{Sturmian words}, are quite well-studied (see e.g. \cite{sturmian1}, \cite{sturmian2}, and for an exposition, \cite{sturmian}), and are closely related to the slope $\theta$ of the measured lamination, via the continued fraction of $\theta$ and its best rational approximations. A finite word is said to be ($\theta$-)\emph{inadmissible} if it is not contained in any Sturmian word of slope $\theta$. In \S\ref{sec:tori}, we give a self-contained account of some properties of Sturmian words. In particular, we prove
\begin{prop}\label{prop:tori}
Let $p_k/q_k$ be the $k$-th continued fraction approximation of an irrational number $\theta$. Then there exists a $\theta$-inadmissible word $w_k$ of length $\le2(p_k+q_k)$ and transverse measure $\leq c/q_k$, where $c$ is a constant depending only on $\theta$.
%\item For any increasing sequence $(i_k)$ of natural numbers, the concatenation $w_{i_1}w_{i_2}w_{i_3}\cdots$ gives an exotic ray.
\end{prop}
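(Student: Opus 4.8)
The plan is to build $w_k$ explicitly from the two Christoffel words attached to the convergent $p_k/q_k$, and to deduce inadmissibility from the precise \emph{interval of slopes} realizing a Sturmian factor rather than from a violation of the balance property. The guiding principle is that admissible factors are realized by honest slope-$\theta$ geodesics (segments of leaves of $\mathcal L$, hence of zero transverse measure), so an inadmissible word must be realized by a geodesic of some slope $\sigma\neq\theta$, and its transverse measure is comparable to $(\text{word length})\cdot|\sigma-\theta|$; the whole game is to force $\sigma$ to sit just on the wrong side of $p_k/q_k$.

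First I would record the continued fraction data: writing $\theta=[a_0;a_1,a_2,\dots]$ with convergents $p_k/q_k$ satisfying $q_{k+1}=a_{k+1}q_k+q_{k-1}$, I will use $|q_k\theta-p_k|<1/q_{k+1}\le 1/q_k$ together with the fact that consecutive convergents lie on opposite sides of $\theta$, so that $\theta$ lies on a definite side of $p_k/q_k$ fixed by the parity of $k$. On the combinatorial side I would recall the dictionary between a geodesic of slope $\sigma$ and its cutting sequence: the slope-$p_k/q_k$ periodic word is $(C_k^{\mathrm{low}})^{\infty}$, where $C_k^{\mathrm{low}}=awb$ and $C_k^{\mathrm{up}}=bwa$ are the lower and upper Christoffel words, $w$ is the central (palindromic) word, and each of $C_k^{\mathrm{low}},C_k^{\mathrm{up}}$ has length $p_k+q_k$ and encodes one pass around the closed geodesic $\gamma_k$ of slope $p_k/q_k$. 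I would then set $w_k=C_k^{\mathrm{low}}C_k^{\mathrm{up}}=awb\,bwa$ (or $C_k^{\mathrm{up}}C_k^{\mathrm{low}}$, the order chosen by the parity of $k$), of length exactly $2(p_k+q_k)$. The point of this concatenation is that the seam produces the block $bb$ (resp.\ $aa$), which does not occur in $(C_k^{\mathrm{low}})^{\infty}$; hence $w_k$ is not a factor of the slope-$p_k/q_k$ sequence and can only be realized by geodesics whose slope lies strictly on one side of $p_k/q_k$. Choosing the order so that this forced side is opposite the side on which $\theta$ lies gives $\theta\notin J(w_k)$, i.e.\ $w_k$ is $\theta$-inadmissible.

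For the transverse measure I would use that $w_k$ is realized by a geodesic whose slope $\sigma_k$ lies on the forced side of $p_k/q_k$ and as close to $p_k/q_k$ as length $2(p_k+q_k)$ allows, so $|\sigma_k-p_k/q_k|=O(1/q_k^2)$ from the Stern--Brocot/mediant description of $J(w_k)$. Since the transverse measure of a slope-$\sigma$ segment of word length $L$ against $\mathcal L$ is, up to a factor depending only on $\theta$ and the normalization of $\mathcal L$, comparable to $L\,|\sigma-\theta|$, I obtain
\[
I(\mathcal L,w_k)\le c(\theta)\,(p_k+q_k)\bigl(|\sigma_k-p_k/q_k|+|p_k/q_k-\theta|\bigr)=O\!\left(\tfrac{1}{q_k}\right),
\]
using $|p_k/q_k-\theta|=|q_k\theta-p_k|/q_k\le 1/(q_kq_{k+1})$. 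Equivalently, the measure is bounded by a constant multiple of the intersection number of a two-period segment near $\gamma_k$ with $\mathcal L$, which is $O(|q_k\theta-p_k|)=O(1/q_k)$.

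The main obstacle I anticipate is the inadmissibility step, and specifically the one-sidedness of $J(w_k)$. Because $w_k$ is concatenated from Christoffel words it is itself balanced, so its inadmissibility for the particular slope $\theta$ cannot be seen from balance and must be read off from the exact interval of realizing slopes. The delicate points are (i) to prove that the seam block forces the realizing slope \emph{strictly} to one side of $p_k/q_k$, not merely that $p_k/q_k$ is excluded, and (ii) to match that side against the side of $p_k/q_k$ on which $\theta$ falls, which is where the alternation of convergents and the parity of $k$ enter. Controlling how close to $p_k/q_k$ the realizing slope $\sigma_k$ may be taken — needed for the measure bound — is a secondary technical point that I expect to fall out of the same mediant analysis of $J(w_k)$.
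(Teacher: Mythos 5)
There is a genuine gap, and it is fatal to the construction as stated: the word $w_k=C_k^{\mathrm{low}}C_k^{\mathrm{up}}$ cannot satisfy the transverse measure bound. The doubled letter at the seam is not a harmless marker of inadmissibility; it is a geometric obstruction to having small transverse measure. Say $\theta>1$ (the paper's convention), so that in any $\theta$-word the letter $a$ is isolated, and take the order producing $aa$ at the seam. Any curve whose cutting sequence contains $aa$ must cross two consecutive vertical grid lines without crossing a horizontal line in between; parametrizing such an arc from $(j,y_0)$ to $(j+1,y_1)$ with $|y_1-y_0|\le 1$, its transverse measure with respect to the slope-$\theta$ foliation satisfies
$$\int \left|dy-\theta\,dx\right| \;\ge\; \left|\int dy-\theta\int dx\right| \;\ge\; \theta-1 \;>\;0 .$$
This lower bound is a constant depending only on $\theta$, independent of $k$, and it applies to \emph{every} representative of $w_k$ (straight, piecewise smooth, or the hyperbolic geodesic), since the word dictates the crossing pattern. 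The other order fares no better: its seam merges two $b$-blocks into a single block of length at least $2\lfloor\theta\rfloor$, and realizing that block again costs a definite amount of transverse measure. So no representative of your $w_k$ has measure $O(1/q_k)$, while the proposition requires inadmissibility and small measure \emph{simultaneously} --- that tension is the entire content of the statement.

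The root cause is that your inadmissibility mechanism is incorrect: the seam makes $w_k$ unbalanced (e.g.\ for $p_k/q_k=1/2$ one gets $aabbaa$, containing both $aa$ and $bb$), so $w_k$ is not a factor of any Sturmian or rational cutting sequence of \emph{any} slope. In particular your claim that $w_k$ is balanced is false, the interval $J(w_k)$ of realizing slopes is empty rather than a one-sided interval abutting $p_k/q_k$, and there is no realizing geodesic of slope $\sigma_k$, so the estimate $I\lesssim L\,\bigl(|\sigma_k-p_k/q_k|+|p_k/q_k-\theta|\bigr)$ has nothing to apply to. To make a construction of this type work, the modification of the periodic word must keep it \emph{locally} Sturmian, which is how the paper proceeds: it takes a carefully chosen cyclic rotation of the $p_k/q_k$-word and changes only the final block from $b^{n}a$ to $b^{n\pm1}a$, keeping all blocks of size $n$ or $n+1$. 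The representative then follows slope-$p_k/q_k$ segments except for one vertical jump of size $1/q_k$ (between adjacent block-starting points on the side of the square), giving measure $O(1/q_k)$; inadmissibility is then a genuinely global statement, proved via the approximation $|q_k(p_l/q_l)-p_k|<1/q_k$ for large $l$, rather than by exhibiting a forbidden local factor.
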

The proof is constructive; the algorithm that produces such words is described in \S\ref{sec:tori}. Since $\sum 1/q_k<\infty$, we immediately have the the following special case of Theorem~\ref{main1}:
\begin{thm}\label{thm:tori}
For any increasing sequence $(i_k)$ of integers of the same parity, the concatenation $w_{i_1}w_{i_2}w_{i_3}\cdots$ gives an exotic word.
\end{thm}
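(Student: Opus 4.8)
The plan is to verify directly that the infinite word $w:=w_{i_1}w_{i_2}w_{i_3}\cdots$ codes an exotic ray (equivalently, is an exotic word) by checking the three defining properties in turn: finite intersection number, non-asymptoticity to a leaf, and failure to be eventually disjoint from $\mathcal{L}$. I would first record the role of the same-parity hypothesis. The words $w_k$ produced by the algorithm of Proposition~\ref{prop:tori} carry an orientation determined by whether the convergent $p_k/q_k$ lies above or below $\theta$; restricting the indices $(i_k)$ to a single parity ensures that no cancellation occurs at the junctions, so that $w$ is a genuine infinite reduced word and the nested tiles $Q_n=g_1\cdots g_nQ$ converge to a single endpoint $z\in S^1$. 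Write $\rho$ for the piecewise-geodesic ray coded by $w$ and $r$ for the geodesic ray from the basepoint to $z$.

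For the intersection number, transverse measure is additive along $\rho$, so Proposition~\ref{prop:tori} gives
\[
I(\mathcal{L},\rho)=\sum_{k}I(\mathcal{L},w_{i_k})\le c\sum_{k}\frac{1}{q_{i_k}}\le c\sum_{j}\frac{1}{q_j}<\infty,
\]
the final series converging as noted just before the theorem statement. Since $r$ shares the endpoint $z$ with $\rho$, the argument of Proposition~\ref{prop:point_at_infinity} then yields $I(\mathcal{L},r)<\infty$.

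Non-asymptoticity is the combinatorial heart of the argument. By the tail characterization (Theorem~\ref{thm:tail2}), $r$ is asymptotic to a leaf precisely when the tail of $w$ agrees with the tail of some Sturmian word of slope $\theta$. But every tail of $w$ still contains $w_{i_k}$ as a factor for all sufficiently large $k$, and each $w_{i_k}$ is $\theta$-inadmissible, hence is a factor of no Sturmian word. Thus no tail of $w$ can coincide with a Sturmian tail, so $r$ is asymptotic to no leaf; equivalently $z\notin\partial\widetilde{\mathcal{L}}$, so $z$ lies in the halo. To exclude eventual disjointness I would use that, since $\theta$ is irrational, $\mathcal{L}$ is minimal and filling, so an eventually-disjoint ray converges either to a cusp or to an ideal endpoint of a leaf bounding a complementary region of $\widetilde{\mathcal{L}}$. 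The second case is already excluded because $z\notin\partial\widetilde{\mathcal{L}}$, and the recurrence of inadmissible factors arbitrarily far out prevents $w$ from being eventually of the parabolic type associated with a cusp; hence $r$ is not eventually disjoint, and is therefore exotic.

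The step I expect to be the main obstacle is the transfer from the explicit piecewise-geodesic $\rho$ to its geodesic representative $r$: one must ensure simultaneously that $I(\mathcal{L},r)$ stays finite and that $r$ still meets $\mathcal{L}$ infinitely often, since a priori the geodesic $r$ might ``shortcut'' past the crossings forced by the inadmissible blocks. Controlling this deviation is exactly what is packaged into Proposition~\ref{prop:point_at_infinity} together with the identification of $z$ as a halo point, and it is where the control on the \emph{length} of the $w_k$ in Proposition~\ref{prop:tori} (and not merely on their transverse measure) enters, by keeping the geometry of the approach to $z$ under control.
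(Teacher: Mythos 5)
Your overall strategy coincides with the paper's (concatenate the inadmissible words, bound the total transverse measure, then use the tail characterization of asymptoticity), but there is a genuine gap in the measure estimate, and it comes from a misreading of the same-parity hypothesis. You assert that restricting $(i_k)$ to one parity ``ensures that no cancellation occurs at the junctions.'' That cannot be the point: each $w_k$ is a positive word in $\bolda,\boldb$ (a concatenation of blocks of the form $\boldb^{n_i}\bolda$), so no cancellation can ever occur, whatever the parities. The real role of the hypothesis is geometric. The bound $\le c/q_{i_k}$ in Proposition~\ref{prop:tori} is a bound on a \emph{specific representative curve} $C_{i_k}$ with specific start and end points, and a representative $\rho$ of the concatenated word is built by inserting connecting segments between the endpoint of $C_{i_k}$ and the starting point of $C_{i_{k+1}}$. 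Your display $I(\mathcal{L},\rho)=\sum_k I(\mathcal{L},w_{i_k})$ silently treats these junctions as free: transverse measure is additive over an actual decomposition of $\rho$, and the junctions are part of $\rho$. (Note also that a junction cannot be rerouted cleverly: to represent the word $w_{i_1}w_{i_2}\cdots$ and nothing more, the junction must stay inside the fundamental square without crossing its edges, so its transverse measure is bounded below by a multiple of the vertical distance between the two endpoints it joins.) The paper's Algorithm~\ref{alg:tori_segments} arranges that when $i_k$ and $i_{k+1}$ have the same parity, the endpoint of $C_{i_k}$ and the starting point of $C_{i_{k+1}}$ both have $y$-coordinate within $1/q_{i_k}$ of the same end of the edge, so each junction is a vertical segment of length $<1/q_{i_k}$ avoiding the lattice points, contributing $O(1/q_{i_k})$; summing still converges. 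With mixed parities the two points sit near opposite ends of the edge, each junction costs a transverse measure bounded below by a positive constant, and the total diverges. So the hypothesis you dismissed as a reduced-word bookkeeping device is exactly what makes your additivity line true, and as written your proof of finiteness does not go through.

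The remainder of your argument is essentially sound and parallels the paper: every tail of $w$ contains an inadmissible $w_{i_k}$, hence no tail agrees with the tail of a leaf's word, ruling out asymptoticity; and Proposition~\ref{prop:point_at_infinity} transfers finiteness of measure from the piecewise ray to its geodesic representative. Your explicit treatment of non-eventual-disjointness (an eventually disjoint ray would end at an ideal vertex of a complementary region, hence at an endpoint of a leaf, or at a cusp, which is excluded since $w$ contains no $\overline{\bolda}$ or $\overline{\boldb}$) is actually more detailed than the paper's one-line conclusion and is a welcome addition; but the proof cannot stand until the junction measures are accounted for.
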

The restriction to integers of the same parity is to guarantee that connecting the segments does not create too much transverse measure; see \S\ref{sec:tori} for details.

%We remark that while we prove Proposition~\ref{prop:tori} and Theorem~\ref{thm:tori} following the scheme described above, the symbolic coding is based on flat structure instead of train tracks.
\paragraph{Jordan domain, 3-manifolds, and exotic circles.}
To conclude the introduction, we will discuss the connection of exotic rays with Jordan curves and quasifuchsian groups, which has motivated our study.

One source of measured laminations on $\mathbb{H}^2$ is the bending lamination of the convex hull of a Jordan curve. Let $\Omega$ be a Jordan domain in $\hat{\mathbb{C}}=\mathbb{C}\cup\{\infty\}$, and let $\Lambda$ be its boundary.

We may view $\hat{\mathbb{C}}\cong S^2$ as the boundary at infinity of $\mathbb{H}^3$. Let $\hull(\Lambda)$ be the convex hull of $\Lambda$ in $\mathbb{H}^3$. Unless $\Lambda$ is a round circle, $\partial\hull(\Lambda)$ consists of two connected components, each isometric to $\mathbb{H}^2$ in the metric inherited from $\mathbb{H}^3$ and bent along a bending lamination (see e.g.\ \cite{bending}). Let $H$ be the component corresponding to $\Omega$ with bending lamination $\mathcal{L}$.

An \emph{exotic circle} in $\Omega$ is a circle $C\subset\overline{\Omega}$ so that $C\cap\Lambda$ consists of a single point $p$, and any larger circle enclosing $C$ is not entirely contained in $\overline{\Omega}$. See Figure~\ref{fig:ellipse} for an example.
\begin{figure}[ht!]
    \centering
    \captionsetup{width=.8\linewidth}
    \includegraphics{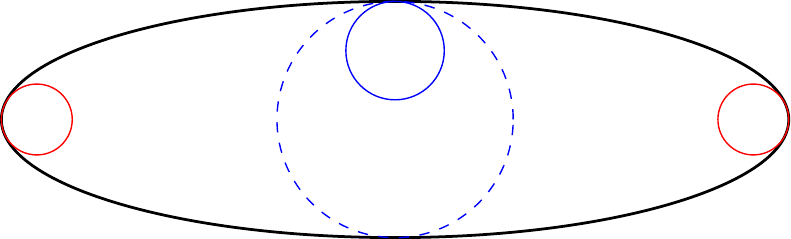}
    \iffalse
    \begin{tikzpicture}
    \draw[thick] (0,0) ellipse (3 and 1.5);
    \draw[red] (2.25,0) circle (0.75);
    \draw[red] (-2.25,0) circle (0.75);
    \draw[blue] (0,1) circle (0.5);
    \draw[blue, dashed] (0,0) circle (1.5);
    \end{tikzpicture}
    \fi
    \caption{A domain bounded by an ellipse, its exotic circles (in red), and a non-exotic circle (in blue). The corresponding bending lamination $\mathcal{M}$ gives a foliation of $\mathbb{H}^2$ by geodesics, and $h\mathcal{M}$ consists of two points.}
    \label{fig:ellipse}
\end{figure}

Using Gauss-Bonnet theorem, it is easy to show
\begin{prop}\label{prop:exotic_circle_necessary}
If $C$ is an exotic circle in $\Omega$, then the point $p=C\cap\Lambda$ is contained in the halo of the bending lamination $\mathcal{L}$.
\end{prop}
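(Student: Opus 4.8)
The plan is to recast the data of the exotic circle $C$ in terms of the dome $H=\partial\hull(\Lambda)$ and to exhibit an explicit exotic ray limiting to $p$. Passing from $C$ to the totally geodesic plane $P_C\subset\mathbb{H}^3$ with $\partial_\infty P_C=C$, the hypotheses on $C$ say precisely that the disk $D_C\subset\overline\Omega$ bounded by $C$ is a maximal disk (inclusion–maximal among round disks contained in $\overline\Omega$, by the enclosing-circle condition) meeting $\Lambda$ in the single point $p$. Consequently $P_C$ is a support plane of $\hull(\Lambda)$ whose flat piece on the dome — the hyperbolic convex hull inside $P_C$ of the contact set $\partial D_C\cap\Lambda=\{p\}$ — degenerates to the lone ideal point $p$. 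Identifying $H$ isometrically with $\mathbb{H}^2$ via an isometry $\phi$, I take $\eta\in S^1$ with $\partial\phi(\eta)=p$ and let $r$ be the geodesic ray on $H$ ending at $\eta$. Its image $\phi(r)$ is a piecewise geodesic in $\mathbb{H}^3$ limiting to $p$ whose total exterior bending equals the transverse measure $I(\mathcal{L},r)$.

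The crux, and the step I expect to be the main obstacle, is to prove $I(\mathcal{L},r)<\infty$; this is where Gauss–Bonnet enters. Since the dome is convex towards $\Omega$ and $P_C$ touches it only at $p$, the support planes encountered along $\phi(r)$ are coherently co-oriented and rotate monotonically, converging to $P_C$. Placing $p=\infty$ in the upper half-space model, $P_C$ becomes the vertical plane over a line $L$ with $\Lambda$ confined to one side of $L$, and the leaves crossed by $r$ project to chords of $\Lambda$. Applying Gauss–Bonnet to the convex region these chords cut off (equivalently, bounding the total turning of the developed image of $\phi(r)$, which is a convex curve spiralling to the ideal point $p$), one bounds the accumulated bending by the total angle between the initial support plane and $P_C$, hence by $\pi$. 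The difficulty is exactly that $r$ meets infinitely many leaves, so finiteness is not automatic and must be extracted from the ambient convexity; once it is, Proposition~\ref{prop:point_at_infinity} guarantees that the geodesic representative of $r$ also has finite intersection number and the same endpoint.

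Finally I would verify that $r$ is genuinely exotic, i.e.\ neither asymptotic to a leaf of $\mathcal{L}$ nor eventually disjoint from it. If $r$ were eventually disjoint from $\mathcal{L}$ then $\phi(r)$ would eventually lie in a single flat piece and $p$ would be an ideal vertex of it; if $r$ were asymptotic to a leaf then $p$ would be an endpoint of a bending line. In either case $p$ is a contact point of an inclusion–maximal disk $D$ with $|\partial D\cap\Lambda|\ge 2$. But the disks in $\overline\Omega$ touching $\Lambda$ at $p$ are nested near $p$, so there is a unique inclusion–maximal one among them; thus $D=D_C$, contradicting $|\partial D_C\cap\Lambda|=|\{p\}|=1$. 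Hence $r$ is exotic and $p=\partial\phi(\eta)\in h\mathcal{L}$, as claimed.
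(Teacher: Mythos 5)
The paper itself contains no proof of this proposition --- it is asserted as an easy consequence of Gauss--Bonnet --- so your attempt can only be judged against what a correct argument must actually accomplish. Your skeleton is the natural one: pass to the dome $H\cong\mathbb{H}^2$, take the geodesic ray $r$ toward the ideal point corresponding to $p$, prove $I(\mathcal{L},r)<\infty$, and then use maximality of $D_C$ to exclude the two non-exotic alternatives. However, the two claims that carry all of the content are asserted rather than proved, and both fail as stated.

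The finiteness step is the first gap. You claim the support planes along $r$ ``are coherently co-oriented and rotate monotonically, converging to $P_C$,'' so the accumulated bending is bounded by a net angle, hence by $\pi$. Monotone rotation of support planes along a path on a pleated surface in $\mathbb{H}^3$ is not a consequence of convexity: the crossed bending lines point in varying directions, the bending measure is a \emph{total variation} rather than a net change, and nothing you say prevents the support planes from oscillating while they converge. (Note also that your finiteness argument never invokes maximality of $D_C$, yet without it the support planes need not converge to $P_C$ at all --- they can converge to a different plane through $p$.) Similarly, ``the developed image of $\phi(r)$ is a convex curve spiralling to $p$'' is unjustified: developing a pleated surface along a ray does not yield a planar curve. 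The way to obtain a genuinely two-dimensional convexity statement --- where turning \emph{is} monotone and Gauss--Bonnet honestly applies --- is to slice: intersect $\hull(\Lambda)$ with a totally geodesic plane $V$ through $p$ transverse to $P_C$. Then $\hull(\Lambda)\cap V$ is convex in $V\cong\mathbb{H}^2$, so its boundary is a convex curve tending to $p$ with finite total turning; one must then compare that turning with the transverse measure the trace picks up on $\mathcal{L}$ (an angle/transversality estimate that can degenerate and needs care), and finally transfer finiteness from this trace to the geodesic ray via Proposition~\ref{prop:point_at_infinity}. None of this is present in your sketch.

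The exclusion step is the second, and more serious, gap. Your argument rests on: ``the disks in $\overline\Omega$ touching $\Lambda$ at $p$ are nested near $p$, so there is a unique inclusion-maximal one among them; thus $D=D_C$.'' This is false. Two round disks in $\overline{\Omega}$ whose boundaries pass through $p$ need not be nested --- their boundary circles can cross at $p$ --- and there may be several inclusion-maximal disks with $p$ on the boundary: $D_C$, which meets $\Lambda$ only at $p$, and ``transversal'' maximal disks whose boundary circles pass through $p$ but are tangent to $\Lambda$ at other points. A support plane carrying a bending line or a flat piece with ideal vertex at $p$ produces a disk of the second kind, and the maximality of $D_C$ by itself says nothing about such disks. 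This is not a cosmetic slip: showing that exoticity of $C$ actually forbids leaves of $\mathcal{L}$ ending at $p$ and flat pieces with ideal vertex $p$ is precisely the delicate point of the proposition, and your proof assumes it away with the unproven (and untrue) nestedness claim.
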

In particular, this applies when $\Lambda$ is the limit set of a quasifuchsian manifold $M$. We have the following question:
\begin{ques}
Does the domain of discontinuity of a typical quasifuchsian manifold exhibit an exotic circle?
\end{ques}

By Proposition~\ref{prop:exotic_circle_necessary} and our main theorem, if the bending lamination on one end is a multicurve, exotic circles do not exist on that side. On the other hand, the bending lamination of a ``typical" quasifuchsian manifold is not a multicurve on at least one end. However, the results in this paper is not enough to establish the existence of an exotic circle.

For a quasifuchsian manifold $M$, exotic circles for its limit set $\Lambda$ correspond to \emph{exotic planes} contained in an end of $M$. Such a plane accumulates on the convex core boundary of $M$, but there is no support plane separating it from the boundary. Determining whether an exotic plane could exist at all, and if so, giving a sufficient condition when it exists are important for analyzing the topological behavior of geodesic planes contained in an end of the quasifuchsian manifold. This is the main motivation for the present work and will be addressed in future research.

%\paragraph{Some complementary results.}

%Among these results, Theorem~\ref{main1} is probably the most interesting. The study of the existence of an exotic ray is motivated by the second author's investigation on geodesic planes in quasifuchsian manifolds (see the discussion below). Theorem~\ref{main1} and its proof, however, are presented without any reference to 3-manifolds, while involving many aspects of surface topology, geometry and combinatorics. The remainder of the introduction is devoted to outlining the basic ideas of the proof.

%To actually implement the scheme, we also need to address some technical difficulties: symbolic coding only gives piecewise geodesic rays, and straightening them to geodesic rays could affect the coding; or the tails of the words only determine asymptotic behavior of a subset of all geodesic rays. See \S\ref{sec:general_proof} for details.

%We also give an argument for uncountability here. Let $w_k$ and $\epsilon_k$ be as in the scheme, and assume $\sum\epsilon_k<\infty$. Given any increasing sequence $(i_k)$ of natural numbers, $w_{i_1}\cdots w_{i_2}\cdots w_{i_3}\cdots$ gives an exotic word (here $\cdots$ denotes connecting words of negligible transverse measure). For the corresponding words to share a tail, the sequences must share a tail as well. In particular, there are at most countably many sequences that give words sharing the same tail. This gives uncountably many exotic words not sharing tails, as desired.
\paragraph{Notes and references.}
The theory of Sturmian words has been generalized to regular octagon and even all regular $2n$-gons, see \cite{octagon}. Our discussions in the case of punctured tori is also inspired and guided by \cite{simple}, where properties of simple words of a punctured hyperbolic surface are studied.

We would like to thank C. McMullen for his continuous support, enlightening discussions, and suggestions. Also, Figures~\ref{fig:symbolic_coding} is produced using his program \textbf{lim}.%Figures~\ref{tr1}, \ref{bgn1} and \ref{bgn2} are produced with the open source vector graphics software \textbf{Inkscape}.

\section{Background}\label{sec:background}
In this section, we explain some concepts and related results which will be used later in the proofs.

\paragraph{Measured laminations.}
 A \emph{measured geodesic lamination} $\mathcal{L}$ is a compact subset of $X$ foliated by simple geodesics, together with a transverse invariant measure, which assigns a measure for any arc transverse to the lamination. The total mass of a transverse arc $\Lambda$ with respect to this measure is the \emph{intersection number} of the arc with $\mathcal{L}$, which we denote by $I(\mathcal{L},\Lambda)$.

A measured lamination $\mathcal{L}$ is called \emph{minimal} if every leaf is dense in $\mathcal{L}$. In general, $\mathcal{L}$ consists of finitely many minimal components. For a multicurve, every minimal component is a simple closed geodesic; the measure of any transverse arc is then simply a weighted count of intersections with $\mathcal{L}$.

\paragraph{Geodesic currents.}
For the proof of Theorem~\ref{thm:linear}, we need some basic facts about geodesic currents. We refer to \cite{Bon.gc}, \cite{Bon.Tch} for details.

Given a hyperbolic surface $X$, a \emph{geodesic current} is a locally finite measure on $T_1(X)$, invariant under the geodesic flow $\phi_t$ and an involution $\iota$ defined as follows. For a point $x\in X$ and a unit tangent vector $v$ at $x$, $\iota(x,v)=(x,-v)$. As an example, the \emph{Liouville measure} $\lambda$ is a geodesic current, which locally decomposes as the product of the area measure on $X$ and the uniform measure of total mass $\pi/2$ in the bundle direction\footnote{We adopt the same normalization of $\lambda$ as \cite{Bon.gc}, so that $i(C,\lambda)=\ell(C)$ for any geodesic current $C$.}. Closed geodesics also give examples of geodesic currents as follows. For a closed geodesic $\gamma$ of length $T$, let $r_\pm:[0,T]\to X$ be two arclength parametrizations with opposite orientations. The geodesic current associated to $\gamma$ is then $((r'_+)_*dt+(r'_-)_*dt)/2$, the average of the pushforward of the Lebesgue measure $dt$ on $[0,T]$ under $r'_\pm:[0,T]\to T_1(X)$.

Let $\mathcal{C}(X)$ be the set of all geodesic currents on $X$. We can define notions of length and intersection number for geodesic currents, which extend the usual length and intersection number for closed geodesics. Indeed, the \emph{length} of a geodesic current $C$ is defined to be the total $C-$mass of $T_1(X)$ and is denoted by $\ell(C)$. The \emph{intersection number} of two geodesic currents $C_1$ and $C_2$, denoted by $i(C_1,C_2)$, is harder to define succinctly; for the precise definition see \cite{Bon.gc}. Here are some properties of $i(\cdot,\cdot)$ needed in the proof of Theorem~\ref{thm:generic}:
\begin{itemize}[topsep=0mm, itemsep=0mm]
    \item For any closed geodesics $\gamma_1$ and $\gamma_2$, $i(\gamma_1,\gamma_2)$ gives the number of times they intersect on $X$ with multiplicity.
   % \item $i:\mathcal{C}(X) \times \mathcal{C}(X) \to \mathbb{R}$ is a continuous function. 
    \item For any geodesic current $C$, $i(C,\lambda)=\ell(C)$.
    \item $i(\lambda,\lambda)=\ell(\lambda)=\pi^2(2g-2+n)$, and so $\lambda/(\pi^2(2g-2+n))$ is a probability measure on $T_1(X)$.
\end{itemize}

One fundamental fact concerning the intersection number is continuity. Let $K$ be a compact subset of $X$. We denote by $\mathcal{C}_K(X)$ the collection of geodesic currents whose support in $T_1(X)$ projects down to a set contained in $K$. Then we have \cite[\S4.2]{Bon.gc}
\begin{thm}[\cite{Bon.gc}]
For any compact set $K\subset X$, the intersection number
$$i:\mathcal{C}(X)\times\mathcal{C}_K(X)\to\mathbb{R}$$
is continuous.
\end{thm}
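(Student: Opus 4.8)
The plan is to unwind Bonahon's definition of the intersection pairing as a crossing mass and to exploit the hypothesis $\beta\in\mathcal{C}_K(X)$ to confine all contributing crossings to the compact set $K$, thereby reducing the problem to the total mass of a product measure on a \emph{compact} configuration space, where weak$^*$ convergence really does control total mass. Recall that, lifting to $\mathbb{H}^2$, a geodesic current is a $\Gamma$-invariant Radon measure on the space $G=G(\mathbb{H}^2)$ of unoriented geodesics, and that $i(\alpha,\beta)$ is the total mass of the product measure $\alpha\times\beta$ on the quotient by $\Gamma$ of the locus $DG\subset G\times G$ of transversally crossing pairs; equivalently, one integrates the crossing multiplicity over the intersection points as they range over $X$. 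The essential observation is that if $\beta\in\mathcal{C}_K(X)$, then every geodesic in $\operatorname{supp}\beta$ lies entirely in $K$, so every crossing contributing to $i(\alpha,\beta)$ occurs at a point of $K$.

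First I would record the two compactness facts that make the restriction to $K$ bite. Writing $G_K\subset G(X)$ for the set of geodesics of $X$ meeting $K$, the set $G_K$ is compact: it is the image in $G(X)$ of the geodesics of $\mathbb{H}^2$ meeting a compact lift of $K$, and the latter form a compact family since meeting a compact set keeps the pair of endpoints in a compact subset of $S^1\times S^1$ off the diagonal. Consequently $\{g:g\subset K\}$ is a compact subset of $G_K$, and every $\beta_n$ (and $\beta$) is supported there; moreover, in a contributing crossing pair $(g,h)$ with $h\in\operatorname{supp}\beta$, the geodesic $g$ meets $K$ at the crossing point, so $g\in G_K$ as well. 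Thus the entire crossing locus relevant to $i(\alpha,\beta)$ sits over the compact space $G_K\times\{g\subset K\}$. Since $\alpha$ is locally finite and $G_K$ is compact, $\alpha(G_K)<\infty$, so all masses in sight are finite.

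Next I would localize the pairing into finitely many product pieces. Cover the relevant part of the crossing locus by finitely many boxes $U_j\times V_j$ --- products of a relatively open set $U_j\subset G_K$ of ``$\alpha$-geodesics'' and a set $V_j$ of ``$\beta$-geodesics'' in $K$ --- chosen small enough that every pair in $U_j\times V_j$ crosses transversally exactly once, which is possible because transverse crossing is an open condition and both the crossing point and angle vary continuously; finiteness of the cover is exactly where compactness of $K$ is used. On each box the crossing contribution is simply the product $\alpha(U_j)\,\beta(V_j)$, and after an inclusion--exclusion to account for overlaps one obtains $i(\alpha,\beta)$ as a finite signed combination of such products. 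Choosing the boundaries $\partial U_j,\partial V_j$ to be null sets for the limiting currents (possible since, among parallel choices, only countably many can carry positive mass), the Portmanteau theorem gives $\alpha_n(U_j)\to\alpha(U_j)$ and $\beta_n(V_j)\to\beta(V_j)$ under weak$^*$ convergence; hence each product converges and the finite sum converges, yielding $i(\alpha_n,\beta_n)\to i(\alpha,\beta)$.

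The main obstacle is the control of escape of mass, and it is precisely what the hypothesis $\beta\in\mathcal{C}_K(X)$ is engineered to defeat. For general currents the total crossing mass is \emph{not} weak$^*$ continuous: crossings can run off into the cusps of $X$, or degenerate toward tangency, and in either case mass can be lost in the limit. Confining $\operatorname{supp}\beta$ to the compact set $K$ pins every crossing to $K$ and makes the ambient configuration space compact, ruling out escape to the cusps; the remaining delicate point is to ensure that near-tangent crossings do not accumulate pathologically and that the box boundaries can genuinely be arranged to be continuity sets. Handling these null-boundary and transversality issues carefully is the technical heart of the argument, but once the reduction to a compact product configuration space is in place, continuity follows from the stability of product masses under weak$^*$ convergence.
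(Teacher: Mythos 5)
You should first be aware that the paper does not prove this statement at all: it is quoted as background from Bonahon (cited as \S4.2 of \cite{Bon.gc}), so there is no proof in the paper to compare against, and the relevant benchmark is Bonahon's own argument. Your proposal does reproduce the correct skeleton of that argument: $i(\alpha,\beta)$ as the $\alpha\times\beta$-mass of the locus $DG/\Gamma$ of transversally crossing pairs, the observation that $\beta\in\mathcal{C}_K(X)$ pins every contributing crossing into $K$ so that the relevant configuration space sits over a compact set, a finite cover by product boxes, and Portmanteau applied to boxes with limit-null boundaries.

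However, there is a genuine gap, and it is exactly the step you defer at the end as ``the technical heart.'' Your box-plus-Portmanteau argument only proves lower semicontinuity: finitely many open boxes compactly contained in $DG$ capture all but $\epsilon$ of the limit mass $(\alpha\times\beta)(DG)$, and each box mass converges, giving $\liminf_n i(\alpha_n,\beta_n)\ge i(\alpha,\beta)$. The upper bound is not addressed, and compactness of the configuration space over $K$ does not supply it: under weak$^*$ convergence, mass of open sets is only lower semicontinuous and mass of compact sets only upper semicontinuous, while $DG$ is open and the indicator of ``crosses transversally'' is discontinuous precisely on $\partial DG$, the pairs of geodesics sharing an endpoint at infinity (or coinciding). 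Finitely many boxes can never cover $DG$ up to this boundary, so you must rule out that the measures $\alpha_n\times\beta_n$ carry a definite amount of crossing mass on nearly tangent/asymptotic pairs --- mass that would vanish in the limit because the limiting pairs no longer cross. Equivalently, one needs a lemma of the form $(\alpha\times\beta)\bigl(\partial DG\bigr)=0$ for the relevant currents (i.e.\ products of currents do not charge pairs of asymptotic geodesics), together with a uniform smallness estimate near $\partial DG$; this requires an actual geometric argument about $\Gamma$-invariant measures (for instance, it is false without invariance, and pairs of asymptotic geodesics genuinely occur, e.g.\ leaves spiraling toward a leaf of a lamination contained in $K$). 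Since your write-up concedes this point rather than proving it, the proposal as it stands establishes only semicontinuity, not the theorem.
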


Finally, we remark that measured laminations also give geodesic currents. For a closed geodesic $\gamma$ and a measured lamination $\mathcal{L}$, two definitions of intersection number (total transverse measure $I(\mathcal{L},\gamma)$ and $i(\mathcal{L},\gamma)$) agree. Moreover, the set of measured laminations is equal to the \emph{light cone}, i.e.\ the set of geodesic currents with zero self-intersection number \cite[Prop 4.8]{Bon.gc}.

\paragraph{Measured train tracks.}
A \emph{train track} $\tau$ is an embedded $1$-complex on $X$ consisting of the set of vertices $V_{\tau}$ (which we call \emph{switches}) and the set of edges $E_{\tau}$ (which we call \emph{branches}), satisfying the following properties:
\begin{itemize}[topsep=0mm, itemsep=0mm]
    \item Each branch is a smooth path on $X$; moreover, branches are tangent at the switches.
    \item Every connected component of $\tau$ which is a simple closed curve has a unique switch of degree two. All other switches have degree at least three. At each switch $v$ if we fix a compatible orientation for branches connected to $v$, there is at least one incoming and one outgoing branch.
    \item For each component $C$ of $X-\tau$, the surface obtained from doubling $C$ along its boundary $\partial C$, has negative Euler characteristic if we treat non-smooth points on the boundary as punctures.
\end{itemize}
A \emph{measured train track} $(\tau,\omega)$ is a train track $\tau$ and a weight function on edges $\omega:E_\tau\to\mathbb{R}_{\ge0}$ satisfying the following equation for each switch $v\in V_\tau$:
$$\omega(e_1)+\dots\omega(e_i)=\omega(e'_1)+\dots+\omega(e'_j)$$
where $e_1,\dots,e_i$ are incoming branches at $v$ and $e'_1, \dots, e'_j$ are outgoing branches, for a fixed compatible orientation of branches connected to $v$. Note that the equation does not change if we choose the other compatible orientation of the branches.

It is a well-known result that each measured train track $(\tau,\omega)$ corresponds to a measured laminiation $\lambda_{\tau}.$
%We may also interpret $\mu$ as a transverse measure supported on $\tau$, by viewing $\mu$ as a weight function. More precisely, it assigns to each transverse segment $\Lambda$ the minimum --- among all transverse segments isotopic to $\Lambda$ --- of the weighted sum of intersections with the branches of $\tau$.
%$\tilde{\mu}(\Lambda')$ among all transverse segments $\Lambda'$ isotopic to $\Lambda$, where $\tilde{\mu}(\Lambda')$ is the sum of $\mu(b)$ for every branch $b$ intersecting $\tau$.  

Let $\mathcal{I}$ be an interval. A \emph{train path} is a smooth immersion $r:\mathcal{I}\to\tau\subset X$ starting and ending at a switch. We say a ray (or a multicurve, or a train track) $\gamma$ is \emph{carried} by $\tau$ if there exists a $C^1$ map $\phi: X \rightarrow X$ homotopic to the identity so that $\phi(\gamma)\subset\tau$ and the differential $d\phi_p$ restricted to the tangent line at any $p$ on the ray (or a multicurve, or a train track) is nonzero. In the case of a ray or an oriented curve, the image under $\phi$ is a train path of $\tau$, and in the case of a train track $\tau'$, $\phi$ maps a train path of $\tau'$ to a train path of $\tau$. We say a geodesic lamination $\mathcal{L}$ is carried by $\tau$ if every leaf of $\mathcal{L}$ is carried by $\tau$.

From the constructions in \S1.7 of \cite{com.tr}, we can approximate a minimal measured lamination $\mathcal{L}$ by a sequence of birecurrent measured train tracks $(\tau_n, \omega_n)$. These train tracks are deformation retracts of smaller and smaller neighborhoods of $\mathcal{L}$.

%train track is 
%More precisely, as $n \to \infty$ we have
%$$
%\lambda_{\tau_n} \to \mathcal{L}
%$$
%where $\lambda_{\tau_n}$ is the lamination corresponding to $(\tau_n, \omega_n)$.{\color{red} same reference?}
%Specifically, for each transverse segment $\Lambda$,
%$$
%\lim_{n\to\infty} \mu_n(\Lambda)= %\mu_\mathcal{L}(\Lambda).
%$$
Moreover, they are connected and satisfy the following properties: 
\begin{itemize}[topsep=0mm, itemsep=0mm]
\item They are not simple closed curves;
\item $\tau_{n+1}$ is carried by $\tau_n$ for $n \geq 1$;
\item $\omega_n$ of each branch is a positive number less than $1/{2^n}$;
\item $\mathcal{L}$ is carried by $\tau_n$ for $n \geq 1$.
\end{itemize}
For more information on train tracks, see \cite{com.tr}.

\paragraph{Topological entropy.} 
Topological entropy is an invariant of a dynamical system. It is a non-negative number  which measures complexity of dynamical system; higher entropy indicates higher complexity. More precisely, let $(Y,\sigma)$ be a dynamical system with a metric $d$. The \emph{topological entropy} of $Y$ is defined as follows. For each $T, \epsilon>0$, let $N(\epsilon, T)$ be the cardinality of the smallest finite set $\mathcal{W}\subset Y$ such that for any point $p \in Y$ there exists $p' \in \mathcal{W}$ where $d(\sigma^i(p),\sigma^i(p'))< \epsilon$ for $0\leq i \leq T$. The entropy of $Y$ is then defined to be
$$\lim \limits_{\epsilon \to 0} \limsup \limits_{T \to \infty}\frac{1}{T}\log(N(\epsilon,T)).$$
We will explain how topological entropy is involved in our problem in \S\ref{sec:carry}.

\section{Train tracks and symbolic coding}\label{sec:entropy}
In this section we use symbolic coding of train tracks to prove Theorem~\ref{thm:tail2}, which gives a criterion for convergence of two train paths.

Let $\tau=(V_\tau,E_\tau)$ be a train track carrying $\mathcal{L}$. Suppose $|E_\tau|=d$ and label the branches of $\tau$ by $b_1^{\tau}, \dots, b_d^{\tau}$. By listing the branches it traverses, we can describe a (finite or infinite) train path $r$ of $\tau$ by a (finite or infinite) word $w_{\tau}(r)$ with letters in the alphabet $B^\tau:=\{ b_i^{\tau} , 1 \leq i \leq d\}$. We can also assign such a word to a ray or a curve carried by $\tau$ if we consider the corresponding train path.

\paragraph{Symbolic coding of points at infinity.}
Assume further $\tau$ is connected and birecurrent. Let $\tilde\tau=\pi^{-1}(\tau)\subset\mathbb{H}^2$. We say a point $p$ on the circle at infinity $S^1_\infty$ is \emph{reached} by a train path of $\tau$ if some lift of the path to $\tilde\tau$ converges to $p$. Two infinite train paths are said to converge at infinity if some lifts of the paths to $\tilde\tau$ reach the same point at infinity. We have
\begin{thm}\label{thm:tail2}
Two train paths of a train track $\tau$ converge at infinity if and only if the corresponding words have the same tail. 
\end{thm}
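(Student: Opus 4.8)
My plan is to translate the combinatorial condition ``same tail'' into geometry using the fibered (tie) neighborhood $N(\tau)$ of $\tau$ (as in \cite{com.tr}), whose fibers I call \emph{ties}. Since $\tau$ is a finite complex, $N(\tau)$ is compact, so there are finitely many ties up to homotopy and every tie of $N(\tilde\tau)=\pi^{-1}(N(\tau))$ has hyperbolic length in a fixed interval $[\ell,L]$ with $\ell>0$; a train path corresponds exactly to a path crossing a sequence of ties, and the branch-word records the sequence of rectangles, hence ties, that it crosses. For the easy direction, suppose $w_\tau(r)$ and $w_\tau(r')$ share an infinite tail, i.e.\ both paths eventually traverse the same pointed train path $\rho=b_{i_1}b_{i_2}\cdots$ starting at a switch $v$. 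Lift $r$ to $\tilde r$ reaching $p\in S^1_\infty$; its tail is a specific lift $\tilde\rho$ of $\rho$, beginning at a lift $\tilde v$ of $v$ and converging to $p$. Since a train path lifts uniquely once the image of one point is prescribed, I choose the lift $\tilde r'$ of $r'$ so that the copy of $v$ at which its tail begins maps to $\tilde v$. Then the tail of $\tilde r'$ coincides with $\tilde\rho$ as a subset of $\tilde\tau$, so $\tilde r'$ also converges to $p$, and $r,r'$ converge at infinity.

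\textbf{The converse: set-up.} The substance is to show that two train paths reaching the same point $p$ must eventually traverse the same branches. First I would record that the ties crossed by a path converge \emph{visually} to its endpoint: if $\tilde r\to p$ and $t_k$ is the $k$-th tie it crosses, then $t_k$ contains a point $\tilde r(s_k)\to p$ and has hyperbolic diameter $\le L$, so, because hyperbolic $L$-balls based near $S^1_\infty$ have vanishing Euclidean size, $t_k\to p$ in $\mathbb{H}^2\cup S^1_\infty$. The same holds for a second path $\tilde r'\to p$ with its ties $t'_j$.

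\textbf{The converse: separation by ties (the crux).} To each tie $t$ I would attach the closed arc $A(t)\subset S^1_\infty$ of endpoints reachable by train paths crossing $t$ in the forward direction. The key structural claim is that, because every complementary region of $\tau$ has negative Euler characteristic and hence $\tilde\tau$ has no complementary bigon or half-strip, these arcs behave like a nested family of separating cross-cuts: if $t'$ lies forward of $t$ then $A(t')\subseteq A(t)$, and \emph{any} forward train path whose endpoint lies in the interior of $A(t)$ must actually cross $t$. Applying this along $\tilde r$, the ties $t_1,t_2,\dots$ give $A(t_1)\supseteq A(t_2)\supseteq\cdots$ with $t_k\to p$; the lower length bound $\ell$ prevents the arcs from stabilizing on a nondegenerate interval, so $\bigcap_k A(t_k)=\{p\}$. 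Since $\tilde r'$ reaches $p$, for every large $k$ its endpoint lies in the interior of $A(t_k)$, whence $\tilde r'$ crosses $t_k$. Thus $\tilde r$ and $\tilde r'$ cross a common cofinite sequence of ties, so traverse a common sequence of branches eventually; projecting to $\tau$ gives the common tail of $w_\tau(r)$ and $w_\tau(r')$.

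\textbf{Main obstacle.} The delicate point is precisely the separation/nesting statement of the last paragraph: proving that a tie cuts off a genuine arc of the ideal boundary, that these arcs nest, and that reaching an interior point forces a crossing. This is exactly where the negative Euler characteristic of the complementary regions must be used, to exclude the configuration of two nonmerging paths reaching $p$ ``in parallel'' (a ladder whose rungs are branches): such rungs, being lifts of a single branch, all have length $\ge\ell$, while two paths asymptotic to the same $p$ come together, so an infinite ladder to $p$ is incompatible with the bounded tie geometry and with the absence of forbidden complementary regions. I expect verifying this rigorously---that $\bigcap_k A(t_k)=\{p\}$ and that a second path to $p$ is funnelled through the same ties---to be the heart of the argument, with the easy direction and the visual convergence of ties being routine by comparison.
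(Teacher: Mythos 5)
Your easy direction is fine, and the tie-neighborhood framework (arcs $A(t)\subset S^1_\infty$ attached to ties, nesting along a path) is a reasonable way to organize the converse. But the proof has a genuine gap, and you have located it yourself: everything of substance has been repackaged into the unproven separation claim. Specifically, you need (a) $\bigcap_k A(t_k)=\{p\}$, (b) that any forward train path ending in the \emph{interior} of $A(t)$ must cross $t$, and (c) that the endpoint $p$ of the second ray actually lies in the interior of each $A(t_k)$ rather than on its boundary. Point (c) is not a technicality: if $\tilde r$ is the ``extreme'' path through its ties, then $p$ is an endpoint of every arc $A(t_k)$, and the funnelling argument says nothing about a second ray running parallel to $\tilde r$ just outside the ties --- which is exactly the configuration the theorem must exclude. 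Likewise (a) is not automatic from the ties converging visually to $p$: the arc $A(t_k)$ records endpoints of \emph{all} forward paths through $t_k$, which can fan out across the disk regardless of how small $t_k$ looks from the origin, so nested arcs could a priori stabilize on an interval, or share the endpoint $p$ with a second point $q$ reachable through every $t_k$.

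The heuristic you offer for the crux --- the ``ladder'' whose rungs have length $\ge\ell$ being incompatible with two paths to $p$ ``coming together'' --- does not work as stated, because train paths are not geodesics. They are smooth curves of small geodesic curvature, i.e.\ quasi-geodesics, and two quasi-geodesic rays with the same ideal endpoint need only stay within \emph{bounded} distance of one another (think of a geodesic and an equidistant curve); that bound is not under your control relative to $\ell$, so no contradiction with rungs of length $\ge\ell$ follows. What actually kills the parallel configuration is combinatorial, not metric: the no-embedded-bigon property of train paths in $\mathbb{H}^2\cup S^1_\infty$ (Penner--Harer, Prop.\ 1.5.2), which is how the paper proceeds. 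There, one shows the two rays $\tilde r_1,\tilde r_2$ to $Q$ are disjoint (a crossing creates a bigon with vertex $Q$); that infinitely many branches must attach to $\tilde r_1$ or $\tilde r_2$ inside the region $T$ they bound together with a finite branch-path $\overline{P_1P_2}$ (otherwise their projections bound a smooth cylinder, forbidden for train tracks); and that the train-path extensions of these branches are pairwise disjoint and cannot hit the opposite ray (each such event creates a bigon, possibly with an ideal vertex), so all of them must hit $\overline{P_1P_2}$, contradicting its finiteness. Your claims (a)--(c) would each require precisely these bigon arguments; without them the proposal is a plan, not a proof.
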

\begin{proof}
%If two train paths go along the same branches after a point, some lifts of the paths also go along the same branches in $\tilde\tau$, and hence reach the same point at infinity.
One direction is obvious. For the other direction, let $X_1$ be the smallest subsurface containing $\tau$. Suppose $r_1$ and $r_2$ are two converging train paths. By definition, there exist lifts $\widetilde{r_1}$ and $\widetilde{r_2}$ of $r_1$ and $r_2$ to $\widetilde\tau$ so that they converge to the same point $Q$ at infinity. Assume the starting point of $\widetilde{r_i}$ is $P_i$ for $i=1,2$. We view $\widetilde{r_i}$ as an \emph{oriented} path from $P_i$ to $Q$. 

 It is easy to see that, in fact, $\widetilde{r_1}$ and $\widetilde{r_2}$ are contained in a single connected component $\widetilde{X_1}$ of $\pi^{-1}(\Int(X_1))$. We aim to prove that $\widetilde{r_1}$ and $\widetilde{r_2}$ share the same branches after a point. 
\begin{figure}[ht!]
    \centering
    \includegraphics[scale=0.6]{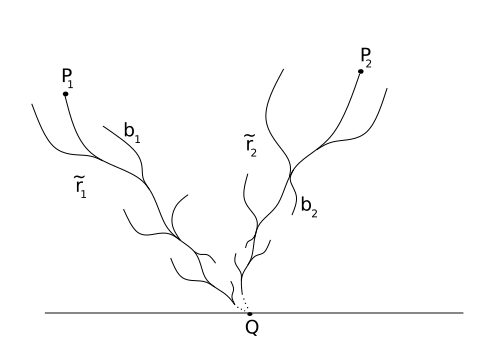}
    \caption{Converging train paths}
    \label{tr1}
\end{figure}

Suppose otherwise. We will repeatedly use the following fact: there is no embedded bigon in $\mathbb{H} \cup S_{\infty}^1$ whose boundary is contained in $r_1 \cup r_2$ \cite[Prop.\ 1.5.2]{com.tr}. We prove the following sequence of claims:
%By a branch in $\mathbb{H}$ we mean a preimage of a branch in $S_1$.
\begin{clmA} 
$\widetilde{r_1}$ and $\widetilde{r_2}$ are disjoint.
\end{clmA}
Indeed, between any intersection and $Q$, $\widetilde{r_1}$ and $\widetilde{r_2}$ bound a nonempty bigon. Therefore, we may assume $\widetilde{r_2}$ is on the left of $\widetilde{r_1}$ as we go toward $Q$ (see Figure~\ref{tr1}).

\begin{clmA}
We can connect $P_1$ and $P_2$ in $\widetilde{X_1}$ by a sequence of branches of $\widetilde{\tau}$.
\end{clmA}
Let $\widetilde{\tau}_*$ be the connected component of $\widetilde\tau$ containing $P_1$. The geodesic segment $\overline{P_1P_2}$ projects to a geodesic segment contained in $X_1$. Each complementary region of $\tau$ in $X_1$ is a disk, a cylinder with one boundary component in $\partial X_1$, or a cylinder with two boundary components formed by branches of $\tau$, at least one of them non-smooth. If the projection of $\overline{P_1P_2}$ intersects the core curve of a cylinder of this last type, then $Q$ must be the end point of a lift of the core curve. Since we can represent train paths by smooth curves whose geodesic curvature is uniformly bounded above by a small constant, we may assume $\widetilde{r_1}$ and $\widetilde{r_2}$ are within bounded distance from each other. Now both $r_1$ and $r_2$ are recurrent, so we obtain an immersed cylinder in $X_1$ whose boundary components are train paths. But this lifts to a bigon with both vertices at infinity..

Therefore, we can homotope the projection of $\overline{P_1P_2}$ rel end points to a (possibly non-smooth) path formed by the branches of $\tau$. This lifts to a path between $P_1$ and $P_2$ formed by branches of $\widetilde{\tau}_*$. Note that this may not necessarily be a train path. From now on, by $\overline{P_1P_2}$ we mean this path consisting of branches.

%Note that $g\widetilde{\tau}_1 \cap \widetilde{\tau}_1=\varnothing$ when $g \in \pi_1(X)$ is not homotopic to a closed curve in $X_1$. Let $h \in \pi_1(S)$ be the closed curve that a segment $\overline{P_1P_2}$ between $P_1$ and $P_2$ is projected to it. Then we can see $Q$ is in $\widetilde{\tau} \cap h\widetilde{\tau}$. So projection of $\overline{P_1P_2}$ on $S$ is homotopic to a closed curve in $S_1$. Each complementary region of $\tau$ in $S_1$ is a disk or an annulus with a boundary in $\partial S_1$. Therefore, every closed curve in $S_1$ is homotopic to a closed curve constructed from the branches of $\tau$ and it implies we can connect $P_1$ and $P_2$ by the branches of $\widetilde{\tau}$. From now on by $\overline{P_1P_2}$ we mean this path which is made of branches. Note that $\overline{P_1P_2}$ is not necessarily a trainpath.  

Let $b$ be a branch attached to $\widetilde{r_i}$. Then $b$ is called an \emph{incoming branch} (resp.\ \emph{outgoing branch}) if it is smoothly connected to the tail (resp.\ head) of a branch of $\widetilde{r_i}$ (recall that $\widetilde{r_i}$ is oriented, and so are its branches). In Figure~\ref{tr1}, for example, $b_1$ is an incoming branch and $b_2$ is an outgoing branch. 

\begin{clmA} \label{inf}
There are either infinitely many branches attached to $\widetilde{r_1}$ on its left, or infinitely many branches attached to $\widetilde{r_2}$ on its right.
\end{clmA}
Suppose otherwise. Then there is no branch on the left of $\widetilde{r_1}$ nor on the right of $\widetilde{r_2}$ after a point. This implies that the projection of $\widetilde{r_i}$ to $X_1$ after that point is a closed curve $\gamma_i$. Moreover, $\gamma_1$ and $\gamma_2$ must be homotopic. The region bounded between them is a cylinder with smooth boundary, which cannot appear for a train track.

%Let $\widetilde{\alpha}$ be the lift of $\alpha$ in $\mathbb{H} \cup S_{\infty}^1$ containing  $Q$. It is a semicircle orthogonal to the real line. A local neighbourhood of $\widetilde{\alpha}$ is divided into two parts, one is corresponding to $S_1$ and the other  is corresponding to $S\backslash S_1$. We can see $\widetilde{r_1}$ and $\widetilde{r_2}$ are on the side related to $S_1$. Note that $\widetilde{r_2}$ and $\widetilde{\alpha}$ are on the same side of $\widetilde{r_1}$, the side which we assumed does not have any attached branches after a point.

%So $\widetilde{r_2}$ is between $\widetilde{\alpha}$ and $\widetilde{r_1}$ and is projected to $R$ which is a contradiction and Claim~\ref{inf} is proved.

Without loss of generality, we may assume there are infinitely many branches attached to $\widetilde{r_1}$ on its left. Let $T \subset \mathbb{H} \cup S_{\infty}^1$ be the region bounded by $\widetilde{r_1}, \widetilde{r_2}$ and $\overline{P_1P_2}$. Given a branch $b$ inside $T$ attached to $\widetilde{r_1}$, we can extend it (from the end not attached to $\widetilde{r_1}$) to a train path $r_b$ until we hit the boundary of $T$. We have the following two cases.

\noindent\textbf{Case 1.}\quad There are infinitely many incoming branches. 
    
Given an incoming branch $b$, if $r_{b}$ hits $\widetilde{r_2}$, then $r_b$ and portions of $\widetilde{r_1}$ and $\widetilde{r_2}$ form a bigon with $Q$ as a vertex (see Figure~\ref{bgn1.a}), which is impossible. Moreover, given two incoming branches $b_1, b_2$, we must have $r_{b_1} \cap r_{b_2}= \varnothing$, for otherwise we would have a bigon (see Figure~\ref{bgn1.b}). 
\begin{figure}[ht]
\begin{subfigure}{.33\textwidth}
  \centering
  % include first image
  \includegraphics[width=.8\linewidth]{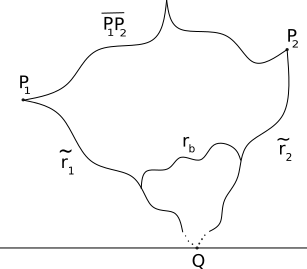}  
  \caption{}
  \label{bgn1.a}
\end{subfigure}
\begin{subfigure}{.33\textwidth}
  \centering
  % include second image
  \includegraphics[width=.8\linewidth]{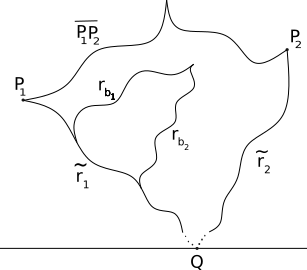}  
  \caption{}
  \label{bgn1.b}
\end{subfigure}
\begin{subfigure}{.33\textwidth}
  \centering
  % include first image
  \includegraphics[width=.8\linewidth]{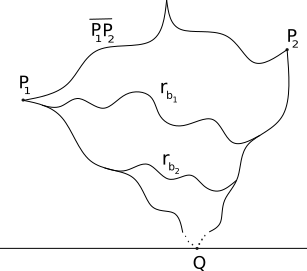}  
  \caption{}
  \label{bgn2}
\end{subfigure}
\caption{Different types of bigons}
\label{bgn1}
\end{figure}

Therefore all the extended train paths hit $\overline{P_1P_2}$. But this is impossible, as $\overline{P_1P_2}$ consists of finitely many branches.

\noindent\textbf{Case 2.}\quad There are infinitely many outgoing branches.

The arguments are similar. Given two outgoing branches $b_1, b_2$, we must have $r_{b_1} \cap r_{b_2}= \varnothing$, for otherwise we have a bigon. Moreover if $r_{b_1}$ and $r_{b_2}$ both hit $\widetilde{r_2}$ then either they bound a bigon (see Figure~\ref{bgn2}) or one of them bounds a bigon with $Q$ as a vertex. Therefore, similar to the previous case, all the extended train paths hit $\overline{P_1P_2}$, again impossible.
\iffalse
\begin{figure}[ht!]
     \centering
    \includegraphics[scale=0.8]{bgn2.png}
    \caption{Another type of bigons}
    \label{bgn2}
\end{figure}
\fi
\end{proof}
As a consequence, for every point $Q$ at infinity that is reachable by $\tau$, we may assign a symbolic coding by choosing any train path reaching $Q$. This coding is only well-defined up to the equivalence relation of having the same tail, and is $\pi_1(X)$-invariant (i.e.\ the equivalence class of words we can assign for $Q$ is the same as that of $\gamma\cdot Q$ for any $\gamma\in\pi_1(X)$). This is very much reminiscent of the classical cutting sequences for the modular surface, or boundary expansion in general (see e.g.\ \cite{simple}).

\section{Entropy and closed curves carried by a train track}\label{sec:carry}
In this section, we show that there exist inadmissible words with arbitrarily small transverse measure.

Let $\mathcal{L}$ be a minimal measured lamination and  $\tau_n$ be a sequence of train tracks approximating $\mathcal{L}$ as described in \S\ref{sec:background}. Assume each branch of $\tau_n$ has positive transverse measure $<1/2^n$, for $n \in \mathbb{N}$. For each $n$, we fix a homotopy $\phi_n$ which sends $\tau_n$ to a subset of $\tau_1$. Via $\phi_n$, each closed train path carried by $\tau_n$ determines a closed train path of $\tau_1$ and corresponding to a word by letters in $B^{\tau_1}$. A word with letters in $B^{\tau_1}$ is said to be \emph{admissible} if it is a subword of $w_{\tau_1}(l)$ for a leaf $l$ of $\mathcal{L}$, where $w_{\tau_1}(l)$ is the word corresponding to $l$. Otherwise, it is called \emph{inadmissible}.

 %Here we adopt the convention that if the starting or the ending point of a train path of $\tau_n$ is mapped to a point in the interior of a branch of $\tau_1$, we will include that branch as well to form a train path of $\tau_1$. This gives a map from the words of train paths of $\tau_n$ to those of $\tau_1$ in the obvious way.

Let $\mathcal{A}_\tau(T)$ be the set of admissible words of length less than $T$. We have:

\begin{prop}\label{prop:ent.poly}
The number $|\mathcal{A}_\tau(T)|$ of admissible words of length $<T$ has polynomial growth in $T$. More precisely, $$|\mathcal{A}_\tau(T)|< 4|E_{\tau}|^2T^{|E_{\tau}|-|V_\tau|+2}.$$
\end{prop}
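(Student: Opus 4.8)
The plan is to show that every admissible word is pinned down by a small amount of discrete, essentially \emph{abelian} data, so that counting words reduces to counting lattice points in a bounded-dimensional polytope; this is what forces polynomial rather than exponential growth. To an admissible word $w$ (a train path carried by $\tau$ that occurs inside the coding of some leaf of $\mathcal{L}$) I would attach three pieces of data: its first branch, its last branch, and its \emph{traversal vector} $n(w)=(n_1,\dots,n_d)\in\mathbb{Z}_{\ge 0}^d$, where $d=|E_\tau|$ and $n_i$ is the number of times $w$ crosses the branch $b_i^{\tau}$. Since the length of $w$ is $\sum_i n_i$ and $w\in\mathcal{A}_\tau(T)$, we have $\sum_i n_i<T$. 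The entire argument rests on the claim that this data recovers $w$.

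\textbf{Key step (uniqueness).} I would prove that an admissible word is determined by its first branch, its last branch, and its traversal vector. The mechanism is the non-crossing of leaves. The branches $w$ traverses are the image of an arc of a leaf $l$ of $\mathcal{L}$, and leaves (and distinct subarcs of a single simple leaf) are pairwise disjoint and co-bound no embedded bigon, precisely the fact \cite[Prop.\ 1.5.2]{com.tr} exploited in the proof of Theorem~\ref{thm:tail2}. Given the traversal vector, at each switch $v$ the number of strands arriving along each incoming branch and departing along each outgoing branch is fixed; disjointness then forces these strands to be joined in the unique order-preserving (non-crossing) pattern. Thus the traversal vector already determines how strands connect through every switch, and once the first and last branches are fixed there is at most one train path realizing it — two distinct realizations would have to cross or co-bound a bigon. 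Making this order-preserving matching precise, and ruling out competing connection patterns, is the real content; I expect this to be the main obstacle, the counting below being bookkeeping.

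\textbf{Counting.} With uniqueness established, $|\mathcal{A}_\tau(T)|$ is at most the number of triples (first branch, last branch, admissible traversal vector with $\sum_i n_i<T$). There are at most $d$ choices each for the first and last branch, a factor $\le d^2$. To count traversal vectors I would use the switch conditions: a train path enters and leaves at switches, so at every switch except the (at most two) switches where $w$ begins or ends, the incoming and outgoing strand counts agree, i.e.\ $n(w)$ satisfies the linear switch equation $\sum_{\mathrm{in}}n_i=\sum_{\mathrm{out}}n_j$. These $\ge|V_\tau|-2$ equations are independent, so $n(w)$ lies in an affine subspace of dimension $\le|E_\tau|-|V_\tau|+2$. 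As every coordinate satisfies $0\le n_i<T$, projecting injectively onto a set of free coordinates complementary to these equations bounds the number of admissible traversal vectors by $T^{\,|E_\tau|-|V_\tau|+2}$. Multiplying by the endpoint factor $d^2$, and absorbing orientation conventions and the two-endpoint / same-versus-distinct-switch cases into the constant, gives $|\mathcal{A}_\tau(T)|<4|E_\tau|^2\,T^{\,|E_\tau|-|V_\tau|+2}$.

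\textbf{On the obstacle.} The counting is routine linear algebra together with a lattice-point estimate, so the essential difficulty is the uniqueness lemma. One subsidiary point needs care: the independence of the interior switch conditions, which is what keeps the exponent at $|E_\tau|-|V_\tau|+2$ rather than something larger. For the connected, birecurrent train tracks (not simple closed curves) produced by the construction of \cite{com.tr} I would verify this directly from the combinatorics of the switches, and I would also double-check the convention that train paths begin and end at switches, to confirm that at most two switch equations are lost.
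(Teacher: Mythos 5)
Your overall strategy is the same as the paper's: encode each admissible word by discrete data whose injectivity comes from non-crossing of strands, then count the traversal vectors as lattice points constrained by switch equations. However, the specific data you chose is insufficient, and the key uniqueness step is false. Counterexample: let $\tau$ be the standard train track on a punctured torus with two branches $a,b$ meeting at a single switch, and let $\mathcal{L}$ be an irrational lamination carried by $\tau$ whose leaf words (which are Sturmian, cf.\ \S\ref{sec:tori}) contain both blocks $ab$ and $aab$. Since both block types occur infinitely often, both transitions occur, so $w_1=aabab$ and $w_2=abaab$ are each subwords of leaf words, hence admissible. They are distinct, yet they have the same first branch $a$, the same last branch $b$, and the same traversal vector $(3,2)$. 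The flaw in your non-crossing argument is that the order-preserving matching of strand-ends at a switch is unique only when it is a \emph{complete} matching: an arc has two loose ends, and the height at which each loose end sits inside the stack of parallel strands of its branch is a genuine extra degree of freedom. Different heights yield different connection patterns, all of them non-crossing, and hence different words carrying identical (first branch, last branch, vector) data.

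This missing degree of freedom is exactly what the paper's proof records: in addition to the counting function $f$ and the sides of the branches containing the endpoints, it keeps two numbers $0<s,k<T$ giving the positions of the two endpoints in the ordering of the subarcs of their branches. That data restores injectivity of the encoding and contributes the factor $T\times T$, while the endpoint sides contribute $2|E_\tau|\times 2|E_\tau|$; the traversal vectors are then counted using the switch equations at \emph{all} $|V_\tau|$ switches, which hold up to a bounded correction $m\in\{-2,\dots,2\}$ coming from the endpoints and are independent by \cite[Lemma 2.1.1]{com.tr}, giving at most $T^{|E_\tau|-|V_\tau|}$ vectors and hence the stated bound $4|E_\tau|^2T^{|E_\tau|-|V_\tau|+2}$. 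Your variant, which tries to generate the $+2$ in the exponent by discarding the two switch equations at the endpoint switches instead of recording endpoint positions, cannot be repaired without reintroducing that positional data: bounding the fibers of your (non-injective) map by the $\le T^2$ possible endpoint positions would still salvage polynomial growth, but with exponent $|E_\tau|-|V_\tau|+4$, not the inequality asserted in the proposition.
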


\begin{proof}
\iffalse
Define $B_{\tau}(T)$ and $A_{\tau}(T)$ in the following way:

\begin{itemize}[topsep=0mm, itemsep=0mm]
\item $B_{\tau}(T)$ is the total number of words of length less than $T$ with alphabet from $B^{\tau}$ which are corresponding to a ray carried by $\tau$. 
\item $A_{\tau}(T)$ is the number of the same set of words with additional property of being admissible.
\end{itemize}

We can see $B_{\tau}$ increases exponentially. On the other hand, we now show that $A_{\tau}(T) < 4n^2T^2\binom{T+n-1}{n-1}$ which is a polynomial in $T$, of degree $n+1$ via the following argument.\\
\fi
We can describe each simple geodesic segment $\gamma$ carried by $\tau$ by all of the following information:
\begin{itemize}[topsep=0mm, itemsep=0mm]
    \item A function $f: E_{\tau} \to \mathbb{N}\cup\{0\}$ which assigns to each branch $e$ the number of times $\gamma$ passes through $e$;
    \item Two points, each on one side of a branch. These points are indicating the end points of $\gamma$.
    \item Two numbers $0<s,k<T$. Subarcs of $\gamma$ in each branch $e \in E_\tau$ have an order,  because $\gamma$ is simple and arcs do not intersect. Two numbers $s,k$ are the place of the end points in this order.
\end{itemize}   
We can construct $\gamma$ from these data uniquely (if there exists any $\gamma$ with these properties).  For each $e$, we draw $f(e)$ arcs parallel to $e$ in a neighborhood of $e$ and at each vertex we  connect incoming and outgoing arcs (except the two end points of $\gamma$) without making any intersection. Therefore, there is an injective map from $\mathcal{A}_{\tau}(T)$ to the set of multiples $(f, P_1,Q_1,s,k)$. The cardinality of this set of multiples is at most (the number of functions $f$)$\times 2|E_\tau|\times 2|E_\tau| \times T \times T$. 

The function $f$ satisfies the following equation for each switch $v\in V_\tau$:
\begin{equation}\label{f.eq}
f(e_1)+\dots+f(e_i)-f(e'_1)-\dots-f(e'_j)=m
\end{equation}
where $e_1,\dots,e_i$ are incoming branches at $v$ and $e'_1, \dots, e'_j$ are outgoing branches, for a fixed compatible orientation of branches connected to $v$ and $m \in \{ -2,-1,0,1,2 \}$ depending on the number of endings points we have on these branches (on the side connected to $v$).

Note that value of $f$ on each branch is less $T$. Equations \ref{f.eq} are independent (see \cite[Lamma 2.1.1]{com.tr}). Therefore, the number of functions $f$ satisfying these properties is at most $T^{|E_{\tau}|-|V_{\tau}|}$.
\end{proof}

 Let $b_n$ be a branch of $\tau_n$ and $\mathcal{K}_{b_n}$ the set of finite words with letters in $B^{\tau_1}$ corresponding to the closed train paths carried by $\tau_n$ and containing $b_n$. Subset $\mathcal{K}_{b_n}(T) \subset \mathcal{K}_{b_n}$ contains the words of length $<T$.
 \begin{lm}\label{lm:2admisible}
 There are at least two admissible words in $\mathcal{K}_{b_n}$ whose train paths are first return paths from $b_n$ to $b_n$.
 \end{lm}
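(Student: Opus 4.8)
The plan is to realize the two required words as \emph{first-return loops of a single dense leaf}. In the relevant case $\mathcal{L}$ is minimal and not a simple closed geodesic (this is exactly the situation in which the approximating $\tau_n$ are not simple closed curves), so every leaf $l$ of $\mathcal{L}$ is a non-closed, bi-infinite geodesic that is dense in $\mathcal{L}$. I would fix such a leaf $l$. Since $\mathcal{L}$ is carried by $\tau_n$ and every branch of $\tau_n$ has positive weight, the branch $b_n$ meets $\mathcal{L}$ in a set of positive transverse measure; density of $l$ then forces the train path of $l$ to traverse $b_n$ infinitely often, and birecurrence of $\tau_n$ guarantees this happens in both directions along $l$.

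Next I would read off the first-return data directly from the word $w_{\tau_1}(l)\in (B^{\tau_1})^{\mathbb{Z}}$. Via the carrying homotopy $\phi_n$, each passage of $l$ through $b_n$ corresponds to an occurrence of the block $\phi_n(b_n)$ at a definite position of $w_{\tau_1}(l)$; list these positions as $\dots<n_{-1}<n_0<n_1<\dots$. For each $i$ let $u_i$ be the block of $w_{\tau_1}(l)$ running from the $n_i$-th passage to the $n_{i+1}$-th passage. By construction $u_i$ is the $B^{\tau_1}$-word of the $\tau_n$-train path that $l$ follows between two \emph{consecutive} (hence otherwise-avoided) passages through $b_n$; this train path is a closed train path carried by $\tau_n$ and containing $b_n$, so $u_i\in\mathcal{K}_{b_n}$, and since $u_i$ is literally a subword of $w_{\tau_1}(l)$ it is admissible. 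Thus every first-return loop of $l$ at $b_n$ is automatically an admissible element of $\mathcal{K}_{b_n}$.

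It then remains to show that at least two of the $u_i$ are \emph{distinct} words. Suppose instead that $u_i=v$ for a single fixed word $v$ and all $i$. Then $w_{\tau_1}(l)=\dots vvv\dots$ is bi-infinitely periodic, so the geodesic $l$, being carried by $\tau_1$ with periodic coding, is the geodesic representative of the closed curve coded by $v$; in particular $l$ is a closed geodesic. This contradicts the fact that leaves of a minimal lamination which is not a simple closed curve are all non-closed. Hence $u_i\neq u_j$ for some $i,j$, giving the two desired admissible words in $\mathcal{K}_{b_n}$ whose train paths are first-return paths from $b_n$ to $b_n$. The step deserving the most care is the implication ``periodic coding $\Rightarrow l$ closed'': one must know that a bi-infinite geodesic carried by $\tau_1$ is pinned down by its $B^{\tau_1}$-coding, so that a periodic coding can only be realized by the corresponding closed geodesic. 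This is precisely the kind of rigidity furnished by the no-embedded-bigon property used in the proof of Theorem~\ref{thm:tail2}, and it is the only nontrivial input; the remaining bookkeeping, identifying passages of $l$ through $b_n$ with occurrences of $\phi_n(b_n)$ in $w_{\tau_1}(l)$, is where birecurrence of $\tau_n$ and positivity of the branch weights are used.
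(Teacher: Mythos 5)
Your proof is correct and follows essentially the same route as the paper: assuming all first-return words coincide, the coding of a (dense) leaf becomes a periodic iteration of a single word, and Theorem~\ref{thm:tail2} then yields a contradiction with the fact that the minimal lamination is not a simple closed geodesic. The only divergence is in the last step: the paper applies Theorem~\ref{thm:tail2} one-sidedly to conclude the leaves are merely \emph{asymptotic} to the closed geodesic of the repeated word (which already contradicts minimality, since a dense leaf accumulating on a closed geodesic forces $\mathcal{L}$ to contain, hence equal, that curve), whereas your stronger claim that the leaf \emph{is} that closed geodesic needs the two-sided rigidity you flag, which requires compatible lifts (e.g., via the deck transformation corresponding to $v$) rather than two independent applications of the theorem to the forward and backward rays.
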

 
 \begin{proof}
It is equivalent to say the admissible words in $\mathcal{K}_{b_n}$ are not generated by a single word. We prove it by contradiction. Assume they are. Then the infinite admissible words are generated by a single word $w \in \mathcal{K}_{b_n}$, because each leaf of $\mathcal{L}$ is recurrent and come back to $b_n$ infinitely many times. Therefore, image of each leaf in $\tau_1$ via $\phi_n$ corresponds to the word constructed from iteration of $w$. By Theorem~\ref{thm:tail2}, all leaves converge to the closed geodesic corresponding to $w$, which is a contradiction. 
 \end{proof}

\begin{cor} \label{cor:ent.exp}
 The number $|\mathcal{K}_{b_n}(T)|$ of closed train paths containing $b_n$ with length $<T$ has exponential grows in $T$.
\end{cor}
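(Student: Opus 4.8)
The plan is to manufacture an exponentially large family of closed train paths by freely concatenating the two first-return loops supplied by Lemma~\ref{lm:2admisible}. Let $u$ and $v$ be the two distinct admissible first-return paths from $b_n$ to $b_n$, and regard each as a closed train path of $\tau_n$ that traverses $b_n$ exactly once, based at $b_n$. Since $u$ and $v$ both pass through $b_n$ with the same orientation, any finite sequence $w=w_1w_2\cdots w_k$ with each $w_i\in\{u,v\}$ can be glued along $b_n$ into a single closed train path of $\tau_n$; the gluing is smooth because it happens along the branch $b_n$ itself, and the result traverses $b_n$ exactly $k$ times. Each such $w$ is a closed train path carried by $\tau_n$ and containing $b_n$, hence $w\in\mathcal{K}_{b_n}$.

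Next I would verify that distinct sequences yield distinct elements. The key is that $u$ and $v$ are \emph{first-return} loops, so neither traverses $b_n$ in its interior. Consequently a concatenation $w_1\cdots w_k$ meets $b_n$ exactly $k$ times, and the segments read off between consecutive visits to $b_n$ recover the factors $w_1,\dots,w_k$ in order. This canonical first-return decomposition makes the assignment $(w_1,\dots,w_k)\mapsto w_1\cdots w_k$ injective as a map into closed train paths of $\tau_n$. Because $\phi_n$ carries $\tau_n$ into $\tau_1$ with nonzero differential along train paths and is homotopic to the identity, distinct closed train paths of $\tau_n$ receive distinct codings in $B^{\tau_1}$; thus the $2^k$ sequences produce $2^k$ distinct words in $\mathcal{K}_{b_n}$.

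It then remains to control the $\tau_1$-length. Set $L=\max\{|w_{\tau_1}(u)|,\,|w_{\tau_1}(v)|\}$, the larger of the two $\tau_1$-word lengths. A concatenation of $k$ factors has $\tau_1$-length at most $kL$, so every $w_1\cdots w_k$ with $k\le (T-1)/L$ lies in $\mathcal{K}_{b_n}(T)$. Therefore
\[
|\mathcal{K}_{b_n}(T)|\ \ge\ 2^{\lfloor (T-1)/L\rfloor},
\]
which grows exponentially in $T$ (with rate at least $(\log 2)/L$), proving the corollary.

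The step I expect to be the main obstacle is the claim that distinct sequences over $\{u,v\}$ give distinct \emph{words}, i.e.\ unique decodability of the codings $w_{\tau_1}(u)$ and $w_{\tau_1}(v)$. This is exactly where the first-return hypothesis is indispensable: without it a loop could revisit $b_n$ internally, the factorization would be ambiguous, and the count could collapse (equivalently, $u$ and $v$ could be powers of a shorter common loop). I would make the recovery of the factorization from the occurrences of $b_n$ precise, noting that a first-return loop is aperiodic as a cyclic branch-word precisely because $b_n$ occurs in it only once, and then confirm separately that the carrying map $\phi_n$ does not identify distinct $\tau_n$-train paths at the level of $\tau_1$-codings.
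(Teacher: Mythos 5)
Your strategy coincides with the paper's own proof: Corollary~\ref{cor:ent.exp} is proved there precisely by freely concatenating the first-return blocks furnished by Lemma~\ref{lm:2admisible} and bounding lengths, exactly as you do. You have also correctly isolated the crux, namely that distinct block sequences must yield distinct \emph{words} in $B^{\tau_1}$ (recall $\mathcal{K}_{b_n}(T)$ is a set of words, so collisions at the word level are fatal). The problem is that both of your justifications for this step are incorrect.

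First, a carrying map $\phi_n$ is in general badly non-injective on train paths: it collapses the ties of a fibered neighborhood of $\tau_1$, so two distinct branches of $\tau_n$ (hence two distinct closed train paths of $\tau_n$) can run over exactly the same sequence of branches of $\tau_1$ and produce identical words. ``Homotopic to the identity with nonzero differential along the track'' does not prevent this; it is exactly what being carried permits. Second, your fallback claim --- that a first-return loop is aperiodic as a cyclic word because $b_n$ occurs in it only once --- conflates the branch $b_n\in E_{\tau_n}$ with the letters of the alphabet $B^{\tau_1}$: visits to $b_n$ leave no trace in a word written in branches of $\tau_1$. Nothing you have said excludes, say, $w_{\tau_1}(u)=z$ and $w_{\tau_1}(v)=z^{2}$ for a single word $z$; these are distinct words of genuine first-return loops, yet every concatenation of $u$'s and $v$'s then has word a power of $z$, your family collapses to fewer than $T/|z|$ distinct elements of $\mathcal{K}_{b_n}(T)$, and the exponential lower bound evaporates. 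A telltale sign is that your decodability argument nowhere uses admissibility of $u,v$ (that they arise from leaves of $\mathcal{L}$); for an arbitrary pair of first-return loops the desired conclusion is simply false.

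The gap is closed by combinatorics on words together with the actual content of Lemma~\ref{lm:2admisible}: two words generate a free monoid unless they commute, and commuting words are powers of a common primitive word. Hence, if no two admissible first-return words generated a rank-two free monoid, then all of them would be powers of one primitive word $z$; since every leaf is recurrent and returns to $b_n$ infinitely often, every leaf's word would be $z^{\infty}$, and Theorem~\ref{thm:tail2} would force every leaf to converge at infinity to the closed geodesic carried by $z$ --- precisely the contradiction run in the proof of Lemma~\ref{lm:2admisible}, whose phrase ``generated by a single word'' must be read in this stronger sense. With a non-commuting pair of admissible blocks in hand, unique factorization does hold, and your length bound $2^{\lfloor (T-1)/L\rfloor}$ finishes the proof.
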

\begin{proof}
Let $a_1, \dots, a_k$ $k \geq 2$, be the words corresponding to the first return paths from $b_n$ to $b_n$ by Lemma \ref{lm:2admisible}. All the words constructed from these blocks are in $\mathcal{K}_{b_n}$. Therefore, $|\mathcal{K}_{b_n}(T)|> k^{cT}$ for a constant $c$.
\end{proof}

\begin{cor} \label{cor:inadmissible}
There is an inadmissible word $w \in \mathcal{K}_{b_n}$ with transverse measure $< 1/2^n$. 
\end{cor}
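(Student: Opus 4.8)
The plan is to play the exponential lower bound of Corollary~\ref{cor:ent.exp} against the polynomial upper bound of Proposition~\ref{prop:ent.poly}: a counting argument produces an inadmissible word in $\mathcal{K}_{b_n}$, and the fact that this word is a closed train path carried by $\tau_n$ through the single branch $b_n$ will force its transverse measure to be at most the weight $\omega_n(b_n)<1/2^n$.

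For the counting step, recall that every word of $\mathcal{K}_{b_n}(T)$ is a word of length $<T$ in the alphabet $B^{\tau_1}$, and that an admissible such word lies in $\mathcal{A}_{\tau_1}(T)$. By Proposition~\ref{prop:ent.poly} the quantity $|\mathcal{A}_{\tau_1}(T)|$ grows at most polynomially in $T$, whereas by Corollary~\ref{cor:ent.exp} we have $|\mathcal{K}_{b_n}(T)|>k^{cT}$ with $k\ge 2$, which grows exponentially. If every element of $\mathcal{K}_{b_n}(T)$ were admissible we would obtain $k^{cT}<|\mathcal{K}_{b_n}(T)|\le |\mathcal{A}_{\tau_1}(T)|$, which fails once $T$ is large enough. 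Hence, for all sufficiently large $T$, some word $w\in\mathcal{K}_{b_n}(T)\subset\mathcal{K}_{b_n}$ is inadmissible.

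It remains to bound $I(\mathcal{L},w)$, which I expect to be the main obstacle. The word $w$ is realized by a closed train path $\gamma$ carried by $\tau_n$ and passing through $b_n$. I would realize $\gamma$ inside a fibered (tie) neighborhood of $\tau_n$, simultaneously with $\mathcal{L}$, so that both run transverse to the ties, i.e.\ parallel to one another along every branch they traverse. Traversing a branch is motion along the band and contributes no transverse intersection; the only genuine crossings of $\mathcal{L}$ occur where $\gamma$ moves across a band, and one can arrange all such crossings to take place in a single tie lying over $b_n$. Since the total $\mathcal{L}$-mass of a tie over $b_n$ equals $\omega_n(b_n)$, this gives
\[
I(\mathcal{L},w)\le \omega_n(b_n)<\frac{1}{2^n},
\]
as desired. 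The delicate point --- and the step I expect to require the most care --- is precisely this geometric realization: one must check that a closed curve carried by $\tau_n$, even one traversing $b_n$ and other branches many times, can be isotoped so that its entire intersection with $\mathcal{L}$ is absorbed into one band of width $<1/2^n$, rather than accumulating a contribution at each return to $b_n$. This uses that $\mathcal{L}$ is carried by $\tau_n$ with all branch weights $<1/2^n$ and that carried objects can be straightened to be transverse to the ties; the counting step, by contrast, is immediate from the two growth estimates already established.
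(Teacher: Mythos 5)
The first half of your argument --- playing the exponential growth of $|\mathcal{K}_{b_n}(T)|$ (Corollary~\ref{cor:ent.exp}) against the polynomial bound of Proposition~\ref{prop:ent.poly} to extract an inadmissible word --- is correct and is exactly how the paper begins. The gap is the second half, and it is not merely delicate: the claim fails. The transverse measure of a closed curve $\gamma$ with respect to $\mathcal{L}$, computed on a taut (geodesic) representative, is the geometric intersection number $i(\mathcal{L},\gamma)$, which is a homotopy invariant; no isotopy inside the tie neighborhood can push all crossings into a single tie over $b_n$. What the tie-neighborhood picture actually gives is that each traversal of a branch $b$ picks up at most $\omega_n(b)<1/2^n$, so a word of combinatorial length $T$ is only bounded by $T/2^n$, and the counting argument gives you no control on $T$. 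Moreover the loss is real, not an artifact of a poor realization: on the torus, if $\tau_n$ is a deeply split track whose two branches carry the weights $\epsilon_k=|p_k-q_k\theta|$ and $\epsilon_{k+1}=|p_{k+1}-q_{k+1}\theta|$, then the carried closed curve traversing the branches $(a,b)$ times represents the class $a(q_k,p_k)+b(q_{k+1},p_{k+1})$ and has $i(\mathcal{L},\gamma)=|a\epsilon_k-b\epsilon_{k+1}|$ (the signs of the two errors are opposite). Taking $b=1$ and $a$ large, this is roughly $a\epsilon_k$ --- arbitrarily many times the branch weight --- even though $\gamma$ passes through every branch. So for the word $w$ your counting step hands you, the bound $I(\mathcal{L},w)<1/2^n$ can simply be false.

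The paper closes this gap by not taking an arbitrary inadmissible word but a carefully structured one. Let $a_1,\dots,a_k$ ($k\ge2$) be the admissible first-return words from $b_n$ to $b_n$ provided by Lemma~\ref{lm:2admisible}. Concatenations of these blocks still grow exponentially, so by Proposition~\ref{prop:ent.poly} some concatenation is inadmissible; take the \emph{shortest} such, $w_n=a_{i_1}\cdots a_{i_s}$. Minimality forces the prefix $a_{i_1}\cdots a_{i_{s-1}}$ to be admissible, hence realizable by a segment of an actual leaf of $\mathcal{L}$, which has zero transverse measure; the final block $a_{i_s}$ is admissible as well and is realized by another leaf segment. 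The only transverse measure of the resulting representative comes from the single junction connecting these two leaf segments, and since both segments meet the branch $b_n$ of $\tau_n$ there, the junction crosses at most the full width of the band over $b_n$, i.e.\ measure at most $\omega_n(b_n)<1/2^n$. In other words, the measure bound never comes from the counting; it comes from engineering the inadmissible word to be ``admissible except at one crossing.'' Your intuition that all the measure should be absorbed at $b_n$ is realized there, but only because every crossing except one is eliminated by using genuine leaf segments --- not by isotoping an arbitrary carried curve, which the topology forbids.
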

\begin{proof}
Let $a_1, \dots, a_k$ $k \geq 2$, be the words corresponding to the first return paths from $b_n$ to $b_n$ by Lemma \ref{lm:2admisible}. The set of words constructed using blocks $a_1, \dots, a_k$ has exponential grows. Therefore, from Proposition \ref{prop:ent.poly}, there must be an inadmissible word among these words.
 
 Let $w_n=a_{i_1}a_{i_2}\dots a_{i_s}$ be the shortest (smallest $s$) inadmissible word constructed from the blocks $a_1, \dots, a_k$. Then $a_{i_1}a_{i_2}\dots a_{i_{s-1}}$ is admissible, and thus can be represented by a segment of a leaf. The $a_{i_s}$ can also be represented by a segment of a leaf; connecting these two segments possibly creates a crossing over the branch $b_n$. Thus what we obtain is a segment $l_n$ representing $w_n$ with transverse measure less than $1/2^n$, as the crossing happens in a branch of $\tau_n$.
\end{proof}

\paragraph{Interpretation by entropy.}
Proposition~\ref{prop:ent.poly} and Lemma \ref{prop:ent.poly} imply the entropy of leaves of a measured lamination is zero and the entropy of closed curves carried by a train track is positive.
% {\color{red}estimate? goes to zero?}
 
To see this, for the former, we define a metric $d$ on the space of infinite words with letters in $B^\tau$ in the following way:
$$
d(w, w')=2^{-i}
$$
where $i$ is the first index that $w$ and $w'$ have different letters. The shift map $\sigma$ which removes the first letter of each word acts on this space. The set of admissible infinite words $\mathcal{A}:=\mathcal{A}_{\tau}(\infty)$ is $\sigma$-invariant, and thus gives us a dynamical system.
 
 Let $N(\epsilon, T)$ be the cardinality of the smallest finite set $\mathcal{W}\subset\mathcal{A}$ such that for any word $w$ there is $w' \in \mathcal{W}$ where $d(\sigma^i(w),\sigma^i(w'))< \epsilon$ for $0\leq i \leq T$. It means the first $T+c$ letters of $w$ and $w'$ are the same, where $c$ is a constant depending on $\epsilon$. Therefore, $N(\epsilon, T)<|\mathcal{A}_{\tau}(T+c)|$. 
 From Proposition~\ref{prop:ent.poly} the topological entropy of $\mathcal{A}$ is zero.
 
 Similarly, we can define distance function $d$, shift function $\sigma$ on the space of bi-infinite words.
 For each closed train path of $\tau_1$ (specifically for each element of $\mathcal{K}_{b_n}$), by repeating the word corresponding to it, we obtain a bi-infinite word. The set of words obtained from $\mathcal{K}_{b_n}$ is called $\widetilde{\mathcal{K}_{b_n}}$. It gives us a dynamical system.
 
 Let $M(\epsilon, T)$ be the cardinality of the smallest finite set $\mathcal{Y}\subset \widetilde{\mathcal{K}_{b_n}}$ such that for any word $y$ there is $y' \in \mathcal{Y}$ where $d(\sigma^i(y),\sigma^i(y'))< \epsilon$ for $0\leq i \leq T$. We can see $M(\epsilon, T)> |\mathcal{K}_{b_n}(T)|$ which has exponential grows in $T$, from Cor.\ \ref{cor:ent.exp}. Therefore, the topological entropy of closed curves carried by the train track is positive, as these contain $\mathcal{K}_{b_n}$.

\section{The halo of a measured lamination}\label{sec:general_proof}

In this section, we discuss some properties of the halo of a measured lamination, and give a proof of the main result Theorem~\ref{mainprime}.

Recall that given a measured lamination $\mathcal{M}$ of $\mathbb{H}^2$, the halo of $\mathcal{M}$, denoted by $h\mathcal{M}$ is the subset of $S^1$ consisting of endpoints of exotic rays. The following proposition states that any geodesic ray ending in $h\mathcal{M}$ is in turn exotic:

\begin{prop}\label{prop:point_at_infinity}
Suppose $r_1:[0,\infty)\to \mathbb{H}$ is a piecewise smooth ray and $r_2$ a geodesic ray. Assume further $r_1$ and $r_2$ reach the same point at infinity. If $I(\mathcal{M},r_1)<\infty$ then $I(\mathcal{M},r_2)<\infty$.
\end{prop}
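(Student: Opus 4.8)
The plan is to reformulate the transverse measure as a measure $\mu$ on the space $\mathcal{G}$ of unoriented complete geodesics of $\mathbb{H}$, supported on the leaves of $\mathcal{M}$, so that for any piecewise smooth arc $\alpha$ one has $I(\mathcal{M},\alpha)=\int_{\mathcal{G}}\#(g\cap\alpha)\,d\mu(g)$. Since $r_2$ is geodesic, every leaf $g$ meets it at most once, so $I(\mathcal{M},r_2)=\mu(G_2)$ where $G_2=\{g:g\cap r_2\neq\varnothing\}$; it therefore suffices to show $\mu(G_2)<\infty$. I would first discard a harmless compact piece: writing $z\in S^1_\infty$ for the common endpoint and fixing large parameters, set $r_2'=r_2|_{[s_0,\infty)}$ and note that the initial segment $r_2|_{[0,s_0]}$ is a compact transversal, hence of finite transverse measure by local finiteness of $\mu$. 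The whole problem then reduces to bounding the $\mu$-measure of leaves meeting the tail $r_2'$.

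The heart of the argument is a trapping/separation statement. Let $A=r_1(t_0)$ and $B=r_2(s_0)$, join them by a geodesic segment $\sigma$, and consider the region $R\subset\mathbb{H}\cup S^1_\infty$ bounded by $\sigma$, the tail $r_1|_{[t_0,\infty)}$, and $r_2'$, all meeting at the single ideal point $z$, so that $\overline{R}\cap S^1_\infty=\{z\}$. Take a leaf $g$ that crosses $r_2'$ and enters $R$. If neither endpoint of $g$ equals $z$, then both endpoints lie in $S^1_\infty\setminus\{z\}$, i.e.\ outside $\overline{R}$, so $g$ must leave $R$ through its finite boundary $\sigma\cup r_1|_{[t_0,\infty)}\cup r_2'$; since $g$ meets the geodesic $r_2'$ at most once it cannot re-exit through $r_2'$, and hence it crosses $\sigma$ or $r_1$. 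Because each such leaf is counted at least once in $I(\mathcal{M},r_1)$ and $\sigma$ is compact, this yields
$$\mu\big(\{g\in G_2 : z\text{ not an endpoint of }g\}\big)\ \le\ I(\mathcal{M},\sigma)+I(\mathcal{M},r_1)\ <\ \infty.$$

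It remains to handle leaves with $z$ as an endpoint. Here I would invoke the elementary fact that two distinct complete geodesics of $\mathbb{H}$ sharing an endpoint at infinity are asymptotic and disjoint in $\mathbb{H}$. Writing $\hat{r}_2$ for the complete geodesic extending $r_2$ (which ends at $z$), any leaf $g\neq\hat{r}_2$ ending at $z$ satisfies $g\cap r_2=\varnothing$, so it never contributes to $G_2$; and if $\hat{r}_2$ itself happens to be a leaf, then $r_2$ runs along the lamination and this single geodesic contributes no transverse measure. Thus leaves ending at $z$ may be ignored, and combining with the previous paragraph gives $\mu(G_2)<\infty$, i.e.\ $I(\mathcal{M},r_2)<\infty$.

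The main obstacle I anticipate is entirely in the first sentence of the trapping step: because $r_1$ is only piecewise smooth, its tail may cross $r_2'$ (or itself) infinitely often, so $\sigma\cup r_1|_{[t_0,\infty)}\cup r_2'\cup\{z\}$ need not be an embedded Jordan curve and $R$ need not be a genuine disk. The cleanest fix is to carry out the separation argument in the closed disk $\overline{\mathbb{H}}$ without assuming embeddedness: what is actually needed is that every geodesic with both endpoints off $z$ which meets $r_2'$ is forced to meet $r_1\cup\sigma$, and this can be extracted from a winding/degree count for the (possibly non-simple) closed curve rather than from a literal Jordan-curve decomposition. Once this separation input is secured, the measure estimate is immediate from $\mu(G_1)\le I(\mathcal{M},r_1)$ and the compactness of $\sigma$, while the asymptotic-geodesics fact disposes of the only genuinely infinite escape channel, namely convergence to the common endpoint $z$.
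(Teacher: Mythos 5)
Your proposal is correct in substance and lands on exactly the paper's estimate, namely $I(\mathcal{M},r_2)\le I(\mathcal{M},r_1)+I(\mathcal{M},\sigma)$ for a connecting geodesic segment $\sigma$ (the paper uses the segment $r_0$ joining $r_2(0)$ to $r_1(0)$ and whole rays rather than tails). The genuine difference is how the key separation step is justified, and here the paper's route is both shorter and immune to the obstacle you flag. Instead of trapping leaves in a region $R$ bounded by $\sigma$ and the two tails --- which, as you correctly worry, need not be a Jordan domain since $r_1$ is only piecewise smooth and may cross $r_2$ or itself infinitely often --- one lets each leaf do the separating: if a leaf $g$ meets $r_2$ but not $r_1$, then the complete geodesic $g$ splits $\mathbb{H}^2$ into two half-planes $H^{\pm}$. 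By your own observation that distinct geodesics sharing an ideal endpoint are disjoint, $z$ is not an endpoint of $g$, so $z$ lies in the interior of the ideal boundary arc of, say, $H^{+}$; the connected set $r_1$, disjoint from $g$ and accumulating at $z$, then lies entirely in $H^{+}$, while $r_2$ crosses $g$ exactly once with its tail (tending to $z$) in $H^{+}$, forcing $r_2(0)\in H^{-}$. Hence any arc from $r_2(0)$ to $r_1(0)$ crosses $g$, which is precisely the statement you wanted to extract from a winding/degree count --- no such count, and no embeddedness of $\partial R$, is needed. Your remaining ingredients (the measure on the space of geodesics, discarding the compact initial segment, and the treatment of leaves ending at $z$) are all correct; the last one is genuinely used in the paper's argument too, implicitly, to ensure that every leaf meeting $r_2$ does so transversely and exactly once.
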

\begin{proof}
Let $r_0$ be the (finite) geodesic segment between $r_2(0)$ and $r_1(0)$. Consider leaves intersecting $r_2$ but not $r_1$. Since $r_2$ is geodesic, each of these leaves intersects $r_2$ only once, and must also intersect $r_0$. In particular, $I(\mathcal{M},r_2)\le I(\mathcal{M},r_1)+I(\mathcal{M},r_0)<\infty$, as desired.
\end{proof}

%Old version of Prop above.
%\begin{prop}\label{prop:point_at_infinity}
%Suppose $r_1:[0,\infty)\to \mathbb{H}$ is a piecewise smooth ray and $r_2$ a geodesic ray. Assume the hyperbolic distance $d(r_1(t),r_2(t))$ is bounded independent of $t$. If $\tilde\mu_{r_1}(\infty)<\infty$ then $\tilde\mu_{r_2}(\infty)<\infty$.
%\end{prop}
%\begin{proof}
%Let $r_0$ be any path between $r_2(0)$ and $r_1(0)$. For any $t>0$, let $r_t$ be a path between $r_1(t)$ and $r_2(t)$; by assumption, we may assume the length of $r_t$ is bounded independent of $t$. let $R_t$ be the concatenation of $r_0$, $r_1[0,t],r_t$. Then $\tilde\mu(r_2[0,t])\le\tilde\mu(R_t)=\tilde\mu(r_0)+\tilde\mu(r_1[0,t])+\tilde\mu(r_t)$. Thus it suffices to show that $\tilde\mu(r_t)$ is bounded above independent of $t$; but this is clear.
%\end{proof}

As a consequence, if $r_1$ and $r_2$ are asymptotic geodesic rays, then $I(\mathcal{M},r_1)<\infty$ if and only if $I(\mathcal{M},r_2)<\infty$. In particular, any geodesic ray asymptotic to a leaf of $\mathcal{M}$ has finite intersection number, as any leaf has intersection number $0$.
%as a matter of fact, a similar argument as above implies that $r_1$ and $r_2$ have the same growth rate. In particular, any geodesic ray asymptotic to a leaf has finite transverse measure, as any leaf has transverse measure $0$. It also implies that varying the hyperbolic structure on $X$ will not change the fact that $\mu_r(\infty)<\infty$. Moreover we have:

When $\mathcal{L}=\widetilde{\mathcal{L}}$, the lift of a measured lamination $\mathcal{L}$ on $X$ to $\mathbb{H}^2$, we have the following consequence of the proposition above:
\begin{cor}\label{cor:dense}
The set of exotic vectors in $T_1(X)$ is empty or dense.
\end{cor}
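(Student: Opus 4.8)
The plan is to reduce the statement to a density property of the halo on the circle, and then to invoke minimality of the $\Gamma$-action on $S^1$. The key dictionary is that a vector $v\in T_1(X)$ is exotic precisely when, for any lift $\tilde v\in T_1(\mathbb{H}^2)$, the forward endpoint $\xi^+(\tilde v)\in S^1$ lies in the halo $h\widetilde{\mathcal{L}}$. Indeed, intersection number, asymptoticity to a leaf, and eventual disjointness are all preserved under lifting, and by the consequence of Proposition~\ref{prop:point_at_infinity} each of these three conditions depends only on the forward endpoint of the lifted ray: two geodesic rays with the same endpoint are asymptotic, hence simultaneously have finite or infinite intersection number; they are asymptotic to a leaf precisely when $\xi^+\in\partial\widetilde{\mathcal{L}}$; and, since the complementary regions of $\widetilde{\mathcal{L}}$ are convex, one is eventually disjoint from $\widetilde{\mathcal{L}}$ iff the other is. Thus, writing $\xi^+\colon T_1(\mathbb{H}^2)\to S^1$ for the forward-endpoint map, the set of exotic vectors pulls back to $\tilde E:=(\xi^+)^{-1}(h\widetilde{\mathcal{L}})$.

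Next I would carry out the reduction. Since the covering map $T_1(\mathbb{H}^2)\to T_1(X)=T_1(\mathbb{H}^2)/\Gamma$ is open, continuous and surjective, and $\tilde E$ is $\Gamma$-invariant, the image $E$ of $\tilde E$ is dense in $T_1(X)$ if and only if $\tilde E$ is dense in $T_1(\mathbb{H}^2)$. Because $\xi^+$ is an open map that sends any basic open set to an open arc, $\tilde E=(\xi^+)^{-1}(h\widetilde{\mathcal{L}})$ is dense iff $h\widetilde{\mathcal{L}}$ meets every open arc, i.e.\ iff $h\widetilde{\mathcal{L}}$ is dense in $S^1$. In the same way, $E$ is empty iff $h\widetilde{\mathcal{L}}$ is empty. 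It therefore suffices to prove: if $h\widetilde{\mathcal{L}}\neq\varnothing$, then $h\widetilde{\mathcal{L}}$ is dense in $S^1$.

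For the final step I would use invariance and minimality. Since $\widetilde{\mathcal{L}}$ is $\Gamma$-invariant, the notion of exotic ray is invariant under deck transformations, so $h\widetilde{\mathcal{L}}$ is a $\Gamma$-invariant subset of $S^1$. As $X$ has finite area, $\Gamma$ is a lattice in $\mathrm{PSL}_2(\mathbb{R})$, so its limit set is all of $S^1$ and $\Gamma$ acts minimally on $S^1$; in particular every $\Gamma$-orbit is dense. Choosing any $\xi_0\in h\widetilde{\mathcal{L}}$ (possible by assumption), the orbit $\Gamma\cdot\xi_0$ is contained in $h\widetilde{\mathcal{L}}$ and is dense in $S^1$, whence $h\widetilde{\mathcal{L}}$ is dense. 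Combining this with the reduction shows that $E$ is empty (when $h\widetilde{\mathcal{L}}=\varnothing$) or dense (when $h\widetilde{\mathcal{L}}\neq\varnothing$).

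The two openness arguments are routine; the only points requiring care are the claims that eventual disjointness and asymptoticity to a leaf genuinely depend only on the forward endpoint (convexity of the complementary regions, and the endpoint description of leaves), which complement the intersection-number statement already furnished by Proposition~\ref{prop:point_at_infinity}. The main conceptual input is the minimality of the lattice action on $S^1$, which upgrades a single exotic endpoint to a dense set; this is exactly where finiteness of area (hence limit set equal to $S^1$) is used, and it is consistent with the ellipse example, where $\mathcal{M}$ is not $\Gamma$-invariant and the halo consists of only two points.
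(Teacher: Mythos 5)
Your proof is correct and takes essentially the same route as the paper: the halo is $\Gamma$-invariant, hence dense in $S^1$ when nonempty (minimality of the lattice action on the circle), and density of exotic vectors then follows by pulling back along the forward-endpoint map, with Proposition~\ref{prop:point_at_infinity} ensuring that being exotic depends only on the endpoint. The paper's version is terser---it phrases the pullback as density of directions at a single lifted basepoint rather than openness of $\xi^+$, and it glosses the same ``eventual disjointness depends only on the endpoint'' subtlety that you flag---but the ingredients are identical.
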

\begin{proof}
The halo $h\tilde{\mathcal{L}}$ is $\pi_1(X)$ invariant, so if it is nonempty, it is also dense. Assume it is nonempty. For any $p\in X$, choose a covering map $\mathbb{H}^2\cong\mathbb{D}\to X$ so that a lift of $p$ is at the origin in the unit disk $\mathbb{D}$. The set of tangent vectors at the origin pointing towards a point in $h\tilde{\mathcal{L}}$ is dense. This is enough for our desired result.
\end{proof}
Another consequence is that varying the hyperbolic structure will not change the fact that $I(\mathcal{L},r)<\infty$, after straightening $r$ in the new hyperbolic structure. In particular, we may define \emph{halo} for a measured foliation, an entirely topological object, by endowing the surface with any hyperbolic structure.

We are now in a position to prove the main result Theorem~\ref{mainprime}:
\begin{proof}(of Theorem~\ref{mainprime})
If $\mathcal{L}$ is purely atomic, it is easy to see that $h\tilde{\mathcal{L}}=\varnothing$. On the other hand, if $\mathcal{L}$ is not purely atomic, then it has a nonatomic minimal component. An exotic ray for this component that is also bounded away from the other components is clearly an exotic ray for $\mathcal{L}$. To prove Theorem~\ref{mainprime}, it is thus sufficient to construct exotic rays that remains in a small neighborhood of this minimal component of $\mathcal{L}$. Let $\tau_n$ be a sequence of train tracks approximating this minimal component of $\mathcal{L}$ as described in \S\ref{sec:background}. We may further assume $\tau_1$ is disjoint from the other components of $\mathcal{L}$. The rays we construct come from train paths of $\tau_1$ and are thus disjoint from the other components. From now on, for simplicity, we may safely assume $\mathcal{L}$ is minimal.

We construct an exotic ray by gluing a sequence of inadmissible words of $\tau_1$, the sum of whose transverse measures is finite. We fix a homotopy $\phi_n$ which sends $\tau_n$ to a subset of $\tau_1$ and $\phi_n(\tau_n) \subset \phi_{n-1}(\tau_{n-1})$.
%Via $\phi_n$, each train path of $\tau_n$ determines a train path of $\tau_1$. Here we adopt the convention that if the starting or the ending point of a train path of $\tau_n$ is mapped to a point in the interior of a branch of $\tau_1$, we will include that branch as well to form a train path of $\tau_1$. This gives a map from the words of train paths of $\tau_n$ to those of $\tau_1$ in the obvious way.

Choose a branch $b_n \in E_{\tau_n}$, so that under the map from $\tau_n$ to $\tau_{n-1}$, a portion of $b_n$ is mapped to $b_{n-1}$. 
%Consider the train paths in $\tau_n$ which are first return paths from $b_n$ to $b_n$ and whose words are admissible. As detailed above, they give train paths of $\tau_1$. The corresponding words $a_1, \dots, a_k$ are formed by letters in $B^{\tau_1}$. Note that we obtain at least $k\ge2$ words in this way if $n$ is large enough. Let $\mathcal{B}$ be the set of words constructed using blocks $a_1, \dots, a_k$. There must be an inadmissible word in $\mathcal{B}$, as by Prop.\ \ref{prop:ent.poly}, the number of admissible words grows polynomially while the size of $\mathcal{B}$ grows exponentially.
%Let $w_n=a_{i_1}a_{i_2}\dots a_{i_s}$ be the shortest inadmissible word in $\mathcal{B}$. Then $a_{i_1}a_{i_2}\dots a_{i_{s-1}}$ is admissible, and thus can be represented by a segment of a leaf. The $a_{i_s}$ can also be represented by a segment of a leaf; connecting these two segments possibly creates a crossing over the branch $b_n$. Thus what we obtain is a segment $l_n$ representing $w_n$ with transverse measure less than $1/2^n$, as the crossing happens in a branch of $\tau_n$.
From Cor. \ref{cor:inadmissible}, there is a segment $l_n$ carried by $\tau_1$ with transverse measure $<1/2^n$ and starting and ending at $b_n$.

If we glue the segments $l_n$ for $n\ge1$ together we have an ray of finite transverse measure. Indeed, since both $l_n$ and $l_{n-1}$ can be represented by train paths starting and ending in $b_{n-1}$, connecting them create a transverse measure of at most $1/2^{n-1}$. So the total transverse measure $<2\sum1/2^{n}<\infty$. This ray is exotic. Indeed, by construction the ray intersects the lamination infinitely many times; moreover, its word $w_1w_2w_3\cdots$ contains inadmissible subwords in any tail, and by Theorem~\ref{thm:tail2}, this ray does not converge to any leaf of $\mathcal{L}$.

Finally, by passing to a subsequence, we may assume the transverse measures of $l_n$ satisfy $I(\mathcal{L},l_n)/3>\sum_{n+1}^\infty I(\mathcal{L},l_k)$. For any subsequence $\{n_k\}$ of $\mathbb{N}$, we may glue the segments $l_{n_k}$ instead. Note that $w_{n_1}w_{n_2}\cdots$ and $w_{n'_1}w_{n'_2}\cdots$ have the same tail if and only if the sequences $\{n_k\}$ and $\{n'_k\}$ have the same tail. Indeed, our assumption $I(\mathcal{L},l_n)/3>\sum_{n+1}^\infty I(\mathcal{L},l_k)$ means that if the two sequences have different tails, the tails of the corresponding words have different transverse measure. Given any subequence $\{n_k\}$, there are at most countably many subsequences of $\mathbb{N}$ having the same tail. So there are uncountably many of subsequences not having the same tail, implying that there are uncountably many exotic rays with different end points.
\end{proof}

\section{The case of a punctured torus}\label{sec:tori}
In this section, we discuss Theorem~\ref{main1} in the special case where $X$ is a hyperbolic torus with one cusp. In particular, we highlight the connections with continued fractions and Sturmian words. We also remark that the approach adopted here involves translation surfaces and their directional flows, which can be generalized to give a proof in the general case as well. We give an exposition of this in Appendix~\ref{sec:translation}.

\paragraph{Measured laminations on $X$.}
Given a hyperbolic torus $X$ with one cusp, let $\bar X$ be the torus with the cusp filled in (denote the new point by $p$). Uniformization gives a complex structure on $X$ which extends uniquely to $\bar X$. We can present $\bar X\cong\mathbb{C}/(\mathbb{Z}\oplus\mathbb{Z}\tau)$ with $p$ at the origin. Since varying hyperbolic structure does not affect our result, we may as well assume $\tau=i$.

Any measured lamination on $X$ comes from a measured foliation on $\bar{X}\cong\mathbb{C}/\mathbb{Z}^2$. More precisely, fix a real number $\theta$ and consider all lines in $\mathbb{C}$ of slope $\theta$. This gives a foliation on $\bar{X}$, and the transverse measure is given by the length measure in the perpendicular direction. Straightening the lines with respect to the hyperbolic metric on $X$ gives a geodesic lamination, which is atomic if and only if $\theta$ is rational. Thus the space of projectivized measured laminations on $X$ is identified with $\mathbb{RP}^1$ with simple closed geodesics identified with $\mathbb{QP}^1$. We can thus talk about \emph{rational} and \emph{irrational} laminations with this identification. For simplicity, we call the leaf of a lamination/foliation with slope $\theta$ a \emph{$\theta$-leaf}.

\paragraph{Symbolic coding of rays.}
Recall that in Figure~\ref{fig:symbolic_coding}, we illustrated a way of encoding geodesic rays in $X$ by choosing a quadrilateral fundamental domain. Alternatively, we present $X$ as $(\mathbb{C}-\mathbb{Z}^2)/\mathbb{Z}^2$. The open set $\mathbb{C}-\mathbb{Z}^2$ has an intrinsic hyperbolic metric, and by symmetry, horizontal line segments connecting neighboring integral lattice points are geodesics in this metric. The unit square $S=[0,1]\times[0,1]$ with this metric is thus isometric to an ideal quadrilateral. We refer to the left, bottom, right, top sides of $S$ as $J_1,I_1,J_2,I_2$.

Set $\bolda$ as the map $z\mapsto z+1$ and $\boldb$ as $z\mapsto z+i$. We consider $J_1,I_1,J_2,I_2$ as labeled by $\overline{\bolda}$ ($:=\bolda^{-1}$), $\overline{\boldb}$, $\bolda$, $\boldb$, as the squares that share these sides with $S$ are precisely the image of $S$ under the corresponding label. Given a geodesic ray $r$ in $X$, its preimage via the map $S\to X$ consists of countably many geodesic segments $r_1,r_2,r_3,\ldots$, and each segment $r_i$ ends on one of the sides of $S$. Recording the label of the sides $r_i$ lands on, we obtain a reduced infinite word $w_r$. It is easy to see that this word is the same as the one we obtain by taking a lift to $\mathbb{H}^2$ with initial point in $Q$, as in the introduction. Two geodesic rays in $X$ are asymptotic if and only if the corresponding words have the same tail.

\paragraph{Continued fraction and rational approximation.}
Let $\theta$ be a positive real number, and $[c_0;c_1,\ldots]$ its continued fraction. The continued fraction has finite length if and only if $\theta$ is rational. Let $p_k/q_k:=[c_0;c_1,\ldots,c_k]$, the $k$-th \emph{convergent} of $\gamma$. It is easy to see that $p_k/q_k\le\theta$ when $k$ is even, and $p_k/q_k\ge\theta$ when $k$ is odd. They are, in some sense, best approximations to $\theta$ (see e.g. \cite[\S II.6]{continued_fraction}):
\begin{thm}\label{thm:rational_approx}
For any integer $k\ge0$, we have $|q_k\theta-p_k|<1/q_{k+1}$. Moreover, $\min_{\substack{p,q\in\mathbb{Z}\\0<q\le q_k}}|q\theta-p|=|q_k\theta-p_k|$.
\end{thm}
Let $s\in(0,1)$, chosen so that no points in $\mathbb{Z}^2$ lie on the line of slope $\theta$ passing through $s\sqrt{-1}$. Give two complex numbers $x,y$, let $l(x,y)$ be the line segment between them. For each $k$, consider the path consists of $l(s,s+q_k+q_k\theta i)$ and $l(s+q_k+q_k\theta i,s+q_k+p_ki)$. Theorem~\ref{thm:rational_approx} implies that this path maps to a simple closed curve in $\bar X$; as a matter of fact, the following lemma guarantees that when $k$ is large enough, we can homotope the curve to a flat geodesic in $X$:
\begin{lm} \label{lm:homotop}
Set $\epsilon_k=\begin{cases}s&k\text{ is odd}\\1-s&k\text{ is even}\end{cases}$. If $1/q_k<\epsilon_k$, then there exists $s'\in(0,1)$ so that no integer lattice points lie in the region bounded by the lines $y=\theta x+s$, $y=(p_k/q_k)x+s'$, $x=0$, and $x=q_k$.
\end{lm}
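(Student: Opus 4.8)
The plan is to translate the nonexistence of lattice points into a condition on fractional parts and then exploit the rigidity coming from the fact that $L_2$ has the \emph{rational} slope $p_k/q_k$. Write $L_1(x)=\theta x+s$ and $L_2(x)=(p_k/q_k)x+s'$. The only lattice points that can enter the region bounded by $L_1,L_2,x=0,x=q_k$ sit in the integer columns $n\in\{0,1,\dots,q_k\}$, and at such a column the region is the vertical segment between the heights $L_1(n)$ and $L_2(n)$. Since $s$ is chosen so that $L_1$ carries no lattice point, $L_1(n)$ is never an integer; hence column $n$ contains no lattice point of the region exactly when $L_1(n)$ and $L_2(n)$ lie in the same open unit interval, i.e. $\lfloor L_2(n)\rfloor=\lfloor L_1(n)\rfloor$. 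Setting $\alpha_n:=\lfloor n\theta+s\rfloor-(p_k/q_k)n$, this is the single constraint $s'\in(\alpha_n,\alpha_n+1)$, so the lemma reduces to showing that $\bigcap_{n=0}^{q_k}(\alpha_n,\alpha_n+1)=(\max_n\alpha_n,\min_n\alpha_n+1)$ meets $(0,1)$.

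Next I would record three facts about the $\alpha_n$. First, clearing denominators shows $\alpha_n\in\tfrac1{q_k}\mathbb{Z}$ with $\alpha_n\equiv-\{(p_k/q_k)n\}\pmod 1$; since $\gcd(p_k,q_k)=1$, as $n$ runs over $0,\dots,q_k-1$ these residues run over every class of $\tfrac1{q_k}\mathbb{Z}/\mathbb{Z}$ exactly once. Second, $\alpha_0=0$, and the hypothesis $1/q_k<\epsilon_k$ is precisely what forces $\lfloor q_k\theta+s\rfloor=p_k$, hence $\alpha_{q_k}=0$ as well; thus $\{\alpha_n\}_{n=0}^{q_k}$ is a set of exactly $q_k$ distinct multiples of $1/q_k$, one in each residue class. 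Third, writing $\alpha_n=n(\theta-p_k/q_k)+s-\{n\theta+s\}$ and using $|n(\theta-p_k/q_k)|\le|q_k\theta-p_k|<1/q_{k+1}\le 1/q_k$ from Theorem~\ref{thm:rational_approx}, all the $\alpha_n$ lie in a common window of length $1+|q_k\theta-p_k|<1+1/q_k$ (the sign of $\theta-p_k/q_k$, i.e. the parity of $k$, only shifts this window and does not affect the length).

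To finish I would run a pigeonhole argument. Scaling by $q_k$ produces $q_k$ integers that are pairwise distinct modulo $q_k$ and whose total spread is $<q_k+1$, hence $\le q_k$; but a spread of exactly $q_k$ would make the smallest and largest congruent mod $q_k$, contradicting distinctness, so the spread is exactly $q_k-1$. Therefore $\max_n\alpha_n-\min_n\alpha_n=(q_k-1)/q_k<1$, the interval $(\max_n\alpha_n,\min_n\alpha_n+1)$ is nonempty, and since $\alpha_0=0$ forces $\max_n\alpha_n\ge 0$ and $\min_n\alpha_n\le 0$ it lies inside $(0,1)$; any $s'$ in it works. The main obstacle is exactly this oscillation bound: when $q_k$ is large the fractional parts $\{n\theta+s\}$ spread across nearly all of $[0,1)$, so one might fear the constraints $(\alpha_n,\alpha_n+1)$ have empty intersection. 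The resolution is the rigidity from the second step — the $\alpha_n$ are not arbitrary reals but $q_k$ multiples of $1/q_k$ realizing every residue — together with the normalization $\alpha_0=\alpha_{q_k}=0$ coming from the hypothesis; it is precisely this normalization that rules out the wrap-around that would otherwise push the spread up to $1$.
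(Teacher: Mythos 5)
Your proof is correct, and its architecture differs from the paper's even though both rest on the same two arithmetic facts: the coprimality of $p_k$ and $q_k$ (so that multiples of $p_k/q_k$ are equally spaced mod $1$, with gap $1/q_k$) and the convergent bound $|q_k\theta-p_k|<1/q_{k+1}\le 1/q_k$ of Theorem~\ref{thm:rational_approx}. The paper argues constructively: for $k$ even it takes the column $l_0$ maximizing the fractional part $y_l=s+(p_k/q_k)l \bmod 1$ over $0\le l\le q_k-1$, defines $s'=s+l_0(\theta-p_k/q_k)$ explicitly (so that the rational line passes through the point of $y=\theta x+s$ above $x=l_0$), and then verifies column by column that no carry occurs, using $1-y_l>1/q_k$ for $l\ne l_0$ together with $0\le l(\theta-p_k/q_k)<1/q_k$; the hypothesis $1/q_k<\epsilon_k$ is invoked there to keep $s'$ inside $(0,1)$, and the odd case is declared similar. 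You instead prove existence nonconstructively: the admissible $s'$ form the intersection $\bigcap_{n=0}^{q_k}(\alpha_n,\alpha_n+1)$, and your pigeonhole argument on the $q_k$ distinct residues of the $\alpha_n$ in $\tfrac{1}{q_k}\mathbb{Z}/\mathbb{Z}$, trapped in a window of length $<1+1/q_k$, shows this intersection is an open interval of length exactly $1/q_k$ contained in $(0,1)$. Your packaging buys three things: the two parities of $k$ are handled uniformly; the set of \emph{all} valid $s'$ is identified (the paper's explicit choice lies in your interval); and the role of the hypothesis is isolated cleanly---it is exactly what forces $\alpha_{q_k}=0$, ruling out the wrap-around that would otherwise push the spread of the $\alpha_n$ up to $1$ and empty the intersection. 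What the paper's version buys is an explicit, programmable $s'$, in keeping with the algorithmic flavor of the rest of \S\ref{sec:tori}. One small imprecision on your side: the ``i.e.''\ equating ``no lattice point in column $n$'' with $\lfloor L_2(n)\rfloor=\lfloor L_1(n)\rfloor$ overlooks the boundary case where $L_2(n)$ is itself an integer (a lattice point on the line $y=(p_k/q_k)x+s'$); but the strict constraint $s'\in(\alpha_n,\alpha_n+1)$ that you actually impose excludes this case, so nothing is lost.
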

\begin{proof}
We will consider the case where $k$ is even; the other case is similar. In particular, $p_k/q_k\le\theta$. For each integer $0\le l\le q_k-1$, let $y_l=s+(p_k/q_k)l\mod 1$. Then $y_0,\ldots,y_{q_k-1}$ are $q_k$ points of equal distance $1/q_k$ on $(0,1)$. Suppose $y_{l_0}$ is the largest of them. Let $s'=s+l_0(\theta-p_k/q_k)\in(0,1)$. It is easy to see this $s'$ satisfies the conditions we want. Indeed, when $l\neq l_0$, $1-y_l>1/q_k$; since points on the line $y=\theta x+s$ are at most $1/q_k$ above the points on the line $y=(p_k/q_k)x+s$, we have $\theta l+s$, $(p_k/q_k)l+s$ and $(p_k/q_k)l+s'$ all have the same integral part.
\end{proof}
Hence we may approximate the leaf of any irrational lamination with simple closed geodesics in the flat metric; moreover, for any $\theta$-leaf, when $k$ is large enough, the first $q_k$ letters in the word of the leaf form the word of a $p_k/q_k$-leaf. For simplicity, we call the word of any $\theta$-leaf a \emph{$\theta$-word}.

It would thus be useful to know the words of simple closed geodesics. For this we quote \cite{simple_word}:
\begin{thm}\label{thm:simple_word}
Up to permutations of generators which interchange $\bolda$ and $\boldb$, $\bolda$ and $\overline\bolda$, or $\boldb$ and $\overline\boldb$, a simple word $w$ in $\pi_1(X)$ is up to a cyclic permutations either $\bolda$ or $\bolda\overline\boldb \overline\bolda \boldb$ or
$$w=\bolda^{n_1}\boldb\bolda^{n_2}\boldb\cdots \bolda^{n_k}\boldb$$
where $\{n_1,\ldots,n_k\}\subset\{n,n+1\}$ for some $n\in\mathbb{Z}^{+}$.
\end{thm}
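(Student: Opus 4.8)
The statement is classical and purely topological–combinatorial, so I would prove it by combining the flat cutting sequences already set up above with the classification of essential simple closed curves on the once-punctured torus by their slope. The plan is as follows. First I would recall that, on $X$, an essential simple closed curve is either peripheral (isotopic to a small loop around the puncture) or determined up to isotopy by its homology class in $H_1(X;\mathbb{Z})\cong\mathbb{Z}^2$. The peripheral curve is null-homologous and freely homotopic to the commutator $[\bolda,\boldb]$, which up to the allowed symmetries and a cyclic permutation is $\bolda\overline\boldb\,\overline\bolda\,\boldb$; this disposes of that exceptional case. Every other essential simple closed curve has a primitive homology class $(q,p)$, realized by the flat straight geodesic of slope $\theta=p/q$ in $\bar X\cong\mathbb{C}/\mathbb{Z}^2$; the slopes $0$ and $\infty$ give the coordinate curves, and after the symmetry interchanging $\bolda$ and $\boldb$ these are the case $w=\bolda$. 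It then remains to treat primitive slopes $\theta=p/q$ with $p,q>0$.

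The step I expect to be the main obstacle is the classification itself, namely that isotopy classes of essential simple closed curves on $X$ biject with $\mathbb{QP}^1$ together with the peripheral class. Concretely I would need that two homologous essential simple closed curves on the once-punctured torus are isotopic; this is special to this low-complexity surface and can be proved via the bigon criterion, or equivalently from the fact that the curve complex of $X$ is the Farey graph, so that disjoint essential curves are Farey neighbors and a fixed primitive homology class pins down a unique isotopy class. This is the genuinely geometric input; once it is in place, the canonical flat-geodesic representative of slope $p/q$ makes the word computation mechanical.

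For such a representative I would read the word off the cutting sequence in the square $S$, exactly as in the symbolic coding paragraph. Orienting the geodesic up and to the right and choosing a generic offset $s$ so it avoids the lattice points, over one period it crosses $q$ vertical edges (each contributing $\bolda$) and $p$ horizontal edges (each contributing $\boldb$); using the symmetries that swap a generator with its inverse, all letters may be taken positive. Writing the cyclic word with the $\boldb$'s as separators gives $w=\bolda^{n_1}\boldb\bolda^{n_2}\boldb\cdots\bolda^{n_k}\boldb$ with $k=p$ blocks and $\sum_i n_i=q$, where $n_i$ is the number of vertical crossings between the $i$-th and $(i{+}1)$-st horizontal crossings. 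Because the slope is constant, the $x$-coordinates of successive horizontal crossings are equally spaced at distance $q/p$, and the number of integers in any interval of a fixed length $L$ equals either $\lfloor L\rfloor$ or $\lceil L\rceil$; hence $n_i\in\{\lfloor q/p\rfloor,\lceil q/p\rceil\}$, two consecutive integers, which is precisely the balanced form claimed with $n=\lfloor q/p\rfloor$. (If one prefers the measure-theoretic phrasing, this equal-spacing estimate is the degenerate, straight-line case of the three-distance theorem, and the relevant gap bound can be quoted from the best-approximation property in Theorem~\ref{thm:rational_approx}.)

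Finally I would verify primitivity and collect the symmetry reductions. The homology class $(q,p)$ is primitive exactly when $\gcd(p,q)=1$, which forces the cyclic word to be aperiodic, so $w$ is not a proper power and represents a genuine simple closed geodesic rather than a multiple cover. The permutations of generators in the statement realize the finite symmetry group of $X$ (interchanging the two systoles and reversing orientations of the coordinate curves), and these are exactly what reduce an arbitrary primitive slope in $\mathbb{QP}^1$ to one with $p,q>0$ and merge the exceptional slopes $0$ and $\infty$ into the single case $w=\bolda$. Combining the peripheral case, the generator case, and the balanced words arising from interior slopes yields the stated classification.
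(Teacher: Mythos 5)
The paper contains no proof of this statement to compare against: Theorem~\ref{thm:simple_word} is quoted directly from \cite{simple_word}. So the only question is whether your sketch stands on its own, and in outline it does; it is essentially the standard argument for this classification. You reduce to the classical fact that an essential simple closed curve on the once-punctured torus is either peripheral (giving the commutator case $\bolda\overline\boldb\,\overline\bolda\,\boldb$ up to the allowed symmetries) or is determined up to isotopy by a primitive class $(q,p)$, hence isotopic to a flat straight geodesic of slope $p/q$; you then read the word off the straight-line cutting sequence, where the balanced condition $n_i\in\{n,n+1\}$ follows from the equal spacing $q/p$ of successive horizontal crossings together with the fact that an interval of length $L$ with non-integer endpoints contains $\lfloor L\rfloor$ or $\lceil L\rceil$ integers --- your genericity assumption on the offset $s$ is exactly what rules out integer endpoints, so that count is legitimate. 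This is also consistent with how the paper itself uses the theorem afterwards: Algorithm~\ref{alg:simple_word} and Proposition~\ref{prop:sturmian} re-derive precisely this cutting-sequence picture for rational slopes.

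Two caveats, neither fatal. First, as you yourself flag, all of the genuinely topological content sits in the classification step (homologous embedded essential curves on the punctured torus are isotopic, and every non-peripheral class contains a straight representative missing the puncture); quoting the Farey-graph or bigon-criterion fact makes your argument a reduction to a classical theorem rather than a self-contained proof --- a defensible choice, and in effect the same one the paper makes by citing \cite{simple_word}. Second, your symmetry reduction is stated slightly too weakly: reducing to $p,q>0$ does not by itself give $n\in\mathbb{Z}^{+}$, since for slope $p/q>1$ one has $\lfloor q/p\rfloor=0$ and empty $\bolda$-blocks would appear; you must additionally apply the permutation interchanging $\bolda$ and $\boldb$ when $p>q$ (this is exactly how the paper invokes the theorem for rational slopes $r>1$). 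Both points are easily repaired within the symmetries allowed by the statement.
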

Note that for the word $w=\bolda^{n_1}\boldb\cdots \bolda^{n_k}\boldb$ the corresponding element in $\pi_1(\bar X)$ is $\bolda^{\sum_{j=1}^kn_j}\boldb^{k}$, which gives a simple closed curve of slope $k/\sum_{j=1}^kn_k\in[1/(n+1),1/n]$. For simplicity, we simply write $w$ above as $(n_1,\ldots,n_k)$, or $(n_1,\ldots,n_k)_{\bolda,\boldb}$ if we want to specify the letters in order, and call each segment $\bolda^{n_i}\boldb$ an ($n_i$-)block of $w$.

Suppose $r=p/q$ is a rational number $>1$. For any leaf of slope $r$ starting at a point on $J_1$, the first few letters in its word must be a few $\boldb$, followed by an $\bolda$, coming back to $J_1$ again. Hence the word for this simple closed geodesic has exactly the form $w$ described in Theorem~\ref{thm:simple_word}, with $\bolda,\boldb$ interchanged. The positive integer $n$ as in the theorem is uniquely specified unless $r=n$. In any case, the number of $n$-blocks is given by $s=(n+1)q-p$ and the number of $(n+1)$-blocks is given by $t=p-nq$. Each block $\boldb^{n_j}\bolda$ represents a sequence of segments in $S$, starting with a segment starting on $J_1$ and ending with a segment ending on $J_2$. Different blocks are disjoint from each other. We have
\begin{lm}
\begin{enumerate}[label=\normalfont{(\arabic*)}, topsep=0mm, itemsep=0mm]
    \item The starting points of the $(n+1)$-blocks are on top of those of the $n$-blocks, while the end points are switched.
    \item The relative position of the starting points of $n$-blocks is the same as the relative position of the end points; the same is true for $(n+1)$-blocks.
\end{enumerate}
\end{lm}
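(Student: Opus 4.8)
The plan is to reduce everything to a single rotation of the circle $\mathbb{R}/\mathbb{Z}$, obtained by recording the height at which the leaf crosses each vertical side of the square $S$. I would parametrize both $J_1$ and $J_2$ by the $y$-coordinate in $[0,1)$; since the side-pairing $\bolda\colon z\mapsto z+1$ identifies $J_1$ with $J_2$ preserving height, the two parametrizations are compatible. By definition, a block $\boldb^{n_j}\bolda$ is the portion of the leaf running from its entry point on $J_1$ to the next crossing of $J_2$; write $y_j\in[0,1)$ for the entry height and $y_j'\in[0,1)$ for the exit height. First I would record the elementary fact that, since the leaf has slope $r=p/q$, moving from $x$ to $x+1$ raises the true height by exactly $r$, so after reducing modulo the horizontal gluing $\boldb$ one has $y_j'=\{y_j+r\}=\{y_j+t/q\}$, where $t=p-nq$ and $r\equiv t/q\pmod 1$. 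In other words, the start-to-end assignment $y_j\mapsto y_j'$ is the rotation $R\colon y\mapsto\{y+t/q\}$ of the circle.

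Next I would identify which blocks are $n$-blocks and which are $(n+1)$-blocks purely in terms of $R$. The number of $\boldb$'s in the $j$-th block equals the number of integers the height crosses as $x$ increases by $1$, namely $\lfloor y_j+r\rfloor-\lfloor y_j\rfloor=n+\lfloor y_j+t/q\rfloor$ (using $y_j\in[0,1)$). Hence the block is an $(n+1)$-block precisely when $R$ wraps around the circle, i.e.\ when $y_j\ge 1-t/q=s/q$ (recall $s=(n+1)q-p$), and it is an $n$-block precisely when $y_j<s/q$. The genericity of the choice of $s\in(0,1)$ made earlier guarantees that no entry height lands exactly on $s/q$, so this dichotomy is clean.

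Part (1) then follows at once: the $n$-block starting heights lie in the bottom interval $[0,s/q)$ and the $(n+1)$-block starting heights in the top interval $[s/q,1)$, so on $J_1$ the latter sit above the former. Applying $R$, the $n$-blocks (no wraparound) have exit heights $y_j+t/q\in[t/q,1)$, while the $(n+1)$-blocks (with wraparound) have exit heights $y_j+t/q-1\in[0,t/q)$; thus on $J_2$ the two families have exchanged their vertical ranges, which is exactly the asserted ``switch.'' For part (2) I would observe that on each family separately the map $y_j\mapsto y_j'$ is a genuine translation by a fixed constant ($t/q$ for the $n$-blocks, $t/q-1$ for the $(n+1)$-blocks), hence strictly increasing; therefore within each family the order of the starting heights agrees with the order of the ending heights.

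The only real obstacle is bookkeeping: keeping the $\pm1$ coming from the single wraparound consistent, and verifying that the height parametrizations of $J_1$ and $J_2$ are genuinely matched by the side-pairings $\bolda,\boldb$. Once the start-to-end correspondence is correctly identified with the one circle rotation $R$ by $t/q$, both statements are formal consequences of $R$ being an orientation-preserving rotation of $\mathbb{R}/\mathbb{Z}$.
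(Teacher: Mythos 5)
Your proof is correct. The paper gives no written argument for this lemma at all---it simply declares the proof trivial and points to Figure~\ref{fig:blocks}---and your reduction of the start-to-end correspondence to the circle rotation $y\mapsto\{y+t/q\}$ on $\mathbb{R}/\mathbb{Z}$ (wraparound characterizing the $(n+1)$-blocks, and translation within each family preserving order) is precisely the rigorous formalization of what that picture shows, so your approach coincides with the paper's implicit one.
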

\begin{figure}[ht!]
    \centering
    \includegraphics{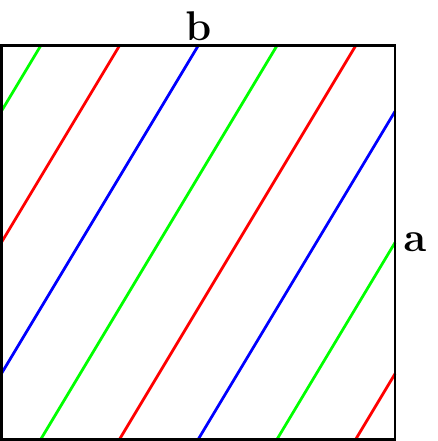}
    \iffalse
    \begin{tikzpicture}[scale=4,thick]
    \draw[red] (0,0.5)--(0.3,1);
    \draw[red] (0.3,0)--(0.9,1);
    \draw[red] (0.9,0)--(1,0.166666667);
    \draw[blue] (0,0.166666667)--(0.5,1);
    \draw[blue] (0.5,0)--(1,0.833333333);
    \draw[green] (0,0.833333333)--(0.1,1);
    \draw[green] (0.1,0)--(0.7,1);
    \draw[green] (0.7,0)--(1,0.5);
    \draw (0,0)--(0,1)--(1,1)--(1,0)--cycle;
    \node at (0.5,1.05) {$\boldb$};
    \node at (1.07,0.5) {$\bolda$};
    \end{tikzpicture}
    \fi
    \caption{$1$- and $2$-blocks in $S$}
    \label{fig:blocks}
\end{figure}
The proof is trivial; see Figure~\ref{fig:blocks} for an example. This suggests the following algorithm to produce $r$-words for $r=p/q>1$:
\begin{alg}\label{alg:simple_word}
\begin{enumerate}[topsep=0mm, itemsep=0mm]
    \item Set $s=(n+1)q-p$ and $t=p-nq$. Suppose $sp:\{1,\ldots,s+t\}\to\{n,n+1\}$ and $ep:\{1,\ldots,s+t\}\to\{n,n+1\}$ record the ordered list of blocks at the starting points and the end points respectively. That is, $sp(j)=n+1=ep(s+j)$ for $1\le j\le t$ and $sp(k+t)=n=ep(k)$ for $1\le k\le s$. Set $w$ to be the empty word. Let $l=l_1$ be any integer between $1$ and $s+t$.
    \item Set $w=w\boldb^{sp(l)}\bolda$. If $1\le l\le t$, set $l=s+l$; otherwise, set $l=l-t$.
    \item If $l=l_1$, output $w$; otherwise repeat Step 2.\qed
    \end{enumerate}
\end{alg}

Note that a different choice of $l_0$ gives a cyclic rearrangement of blocks. Together with the idea of rational approximation of irrational leaves, we have
\begin{prop}\label{prop:sturmian}
Let $\theta=[c_0;c_1,c_2,\ldots]$ be an irrational number $>1$, $p_k/q_k$ its $k$-th convergent, and $n=\lfloor\theta\rfloor$. Let $w$ be the word of a (half) leaf of the foliation of slope $\theta$ starting at a point on $J_1$. Then
\begin{enumerate}[label=\normalfont{(\arabic*)}, topsep=0mm, itemsep=0mm]
    \item $w=\boldb^{n_1}\bolda\boldb^{n_2}\bolda\cdots=(n_1,n_2,\dots)$, where $\{n_1,n_2,\ldots\}\subset\{n,n+1\}$;
    \item For all $k$ large enough, $w(k):=(n_1,n_2,\ldots,n_{q_k})$ gives a $p_k/q_k$-word.
\end{enumerate}
Furthermore, any $p_k/q_k$-word can be extended to a $\theta$-word.
\end{prop}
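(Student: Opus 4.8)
The plan is to encode the half-leaf through its successive returns to the left edge $J_1=\{0\}\times(0,1)$ under the rightward foliation flow, and then to compare with the rational slope $p_k/q_k$ by means of Lemma~\ref{lm:homotop}. Write $\alpha:=\theta-n\in(0,1)$.

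For part (1), I would parametrize the leaf by its intersections with $J_1$. Starting from a height $y\in(0,1)$ and flowing rightward along slope $\theta>1$, the line meets the top edge $I_2$ (label $\boldb$) before the right edge $J_2$ (label $\bolda$), and does so once per unit rise in $y$; hence one return to $J_1$ records a block $\boldb^{n_i}\bolda$, where $n_i$ is the number of horizontal integer lines met as the height climbs from $y$ to $y+\theta$. Since $0<y<1$ this count is $n_i=\lfloor y+\theta\rfloor\in\{n,n+1\}$, and the induced map on heights is the rotation $y\mapsto y+\theta\equiv y+\alpha\pmod 1$. Thus $w=(n_1,n_2,\dots)$ with each $n_i\in\{n,n+1\}$, and $(n_i)$ is exactly the Sturmian coding of the $\alpha$-rotation orbit of the starting height $s$, which is (1).

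For part (2), the key observation is that the word read off a line over the range $x\in[0,q_k]$ is determined by the number of horizontal lines it crosses in each unit interval $(m,m+1)$, i.e.\ by the integers $\lfloor y(m)\rfloor$ at the integer abscissae $x=m$; two lines over this range produce the same word precisely when no lattice point lies strictly between them, since a lattice point $(m,p)$ separates them exactly when the values $\lfloor y(m)\rfloor$ differ. For all $k$ large enough we have $1/q_k<\epsilon_k$, so Lemma~\ref{lm:homotop} furnishes an $s'$ with no lattice point between $y=\theta x+s$ and $y=(p_k/q_k)x+s'$ over $x\in[0,q_k]$. The line of slope $p_k/q_k$ traverses exactly one period over this range (from $(0,s')$ to $(q_k,s'+p_k)$, equal mod $\mathbb{Z}^2$), so its word there is a full $p_k/q_k$-word; by the observation it coincides with the first $q_k$ blocks $(n_1,\dots,n_{q_k})$ of $w$, giving (2).

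For the final claim I would run the same correspondence in reverse. Any $p_k/q_k$-word is, by Algorithm~\ref{alg:simple_word}, a cyclic rotation of the block sequence, realized by a slope-$p_k/q_k$ leaf starting at an appropriate height $s'$; as $s'$ ranges over $(0,1)$ all $q_k$ rotations occur. The construction in the proof of Lemma~\ref{lm:homotop} pairs each such $s'$ with a height $s$ for which the slope-$\theta$ and slope-$p_k/q_k$ lines have no lattice point between them over $[0,q_k]$, so the $\theta$-leaf started at $s$ has exactly this $p_k/q_k$-word as its first $q_k$ blocks; extending it to all $x>0$ exhibits the desired $\theta$-word extension. The one step needing real care is this passage from the metric content of Lemma~\ref{lm:homotop} to the combinatorial statement about words, and especially verifying in the reverse direction that the pairing $s'\leftrightarrow s$ is surjective onto all cyclic rotations; this holds because both slopes partition $(0,1)$ into $q_k$ subintervals indexed by the same rotation orbit, so every rotation is attained.
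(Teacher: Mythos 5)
Your parts (1) and (2) are correct, and they follow essentially the route the paper intends: the return map to $J_1$ is the rotation by $\theta\bmod 1$, whose coding gives the block structure $(n_1,n_2,\dots)$ with $n_i\in\{n,n+1\}$, and Lemma~\ref{lm:homotop}, converted into the statement that two lines read the same word over $[0,q_k]$ when no lattice point separates them, gives (2). Your observation that the word over $[0,q_k]$ is determined by the integer parts $\lfloor y(m)\rfloor$ at integer abscissae is exactly the right bridge between the lemma and the combinatorics.

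The gap is in the ``furthermore.'' Lemma~\ref{lm:homotop} produces a rational intercept $s'$ \emph{from} a given irrational intercept $s$; you need the reverse direction, and your justification --- ``both slopes partition $(0,1)$ into $q_k$ subintervals indexed by the same rotation orbit, so every rotation is attained'' --- is false for the slope-$\theta$ lines. The first $q_k$ blocks of the $\theta$-word from height $s$ change exactly when one of the points $\{s+i\theta\}$, $0\le i\le q_k$, crosses an integer; the transition heights $\{-i\theta\bmod 1\}$, $i=0,\dots,q_k$, are $q_k+1$ distinct points (irrationality of $\theta$), so the $\theta$-lines realize $q_k+1$ pairwise distinct initial words --- this is the Sturmian complexity count $m+1$ --- whereas there are only $q_k$ cyclic rotations of the $p_k/q_k$-word. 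Consequently at least one of the $q_k+1$ word-intervals does \emph{not} yield a $p_k/q_k$-word, and ``matching partitions'' cannot give surjectivity; one must additionally show that \emph{exactly one} interval fails, namely the one violating the hypothesis $1/q_k<\epsilon_k$ of Lemma~\ref{lm:homotop} (heights within $|q_k\theta-p_k|$ of the relevant endpoint of $(0,1)$), so that the remaining $q_k$ distinct words exhaust all rotations. Alternatively, a direct construction avoids the counting entirely: a given $p_k/q_k$-word pins $s'$ down to one interval $\bigl(j/q_k,(j+1)/q_k\bigr)$; take $s'=j/q_k+\eta$ (for $k$ even; mirror this for $k$ odd) with $0<\eta<1/q_k-(q_k\theta-p_k)$, and set $s=s'$. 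Then every orbit point $\{s'+mp_k/q_k\}$ is at distance at least $1/q_k-\eta>q_k\theta-p_k$ below the next integer, while the vertical gap between the two lines over $[0,q_k]$ is at most $q_k\theta-p_k<1/q_{k+1}$, so no lattice point lies between them and your criterion from (2) shows the $\theta$-line from $s'$ begins with the prescribed word. Either repair is short, but as written the surjectivity step does not go through.
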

This proposition provides a lot of information about irrational words, and will be useful in our construction below. Similar statements can be made for irrational numbers in $(0,1)$, or one notices that any $1/\theta$-word can be obtained from a $\theta$-word by interchanging $\bolda$ and $\boldb$.

\paragraph{Inadmissible words.}
Proposition~\ref{prop:sturmian} suggests that some subwords can never appear in a $\theta$-word. For example, if $\theta\in(n,n+1/2)$, since $w(k)$ is a $p_k/q_k$-word for all $k$ sufficiently large, and the number of $n$-blocks $s_k$ is larger than or equal to the number of $(n+1)$-blocks $t_k$ in $w(k)$, there does not exist two consecutive $(n+1)$-blocks in $w(k)$ and thus $w$ itself. We call a word ($\theta$-)\emph{inadmissible} if it can never appear in any $\theta$-word, and ($\theta$-)\emph{admissible} otherwise. We have
\begin{lm}
There exists a $\theta$-inadmissible word consisting of $q_k$ blocks for all $k\ge2$, so that by changing the last block from $n$ to $n+1$, or $n+1$ to $n$ it becomes admissible.
\end{lm}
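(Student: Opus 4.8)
The plan is to pass to the level of blocks and use the fact that, after this recoding, a $\theta$-word becomes a \emph{Sturmian} sequence, whose factor complexity is exactly $m+1$; the desired inadmissible word is then obtained by extending a non-special factor the ``wrong'' way. First I would recode: by Proposition~\ref{prop:sturmian}, a $\theta$-word read one block at a time is an infinite binary word $u=u_1u_2\cdots$ over the two-letter alphabet consisting of an $n$-block and an $(n+1)$-block; write $A,B$ for these two letters. A word made of $m$ blocks is $\theta$-admissible exactly when its block pattern occurs as a factor (contiguous subword) of some such $u$. Since $\theta$ is irrational no $\theta$-word is eventually periodic — otherwise its tail would code a closed geodesic of rational slope — so $u$ is aperiodic, and $\theta$-admissibility of block-words is precisely the factor relation for the aperiodic binary word $u$.

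Next I would record the relevant combinatorics of $u$. The density of $B$'s in $u$ equals $\{\theta\}$, which is irrational, and $u$ is balanced; hence $u$ is Sturmian with factor complexity $p(m)=m+1$ for every $m\ge1$. Equivalently, for each length $m$ there is a \emph{unique} right-special factor, i.e.\ a factor $z$ with both $zA$ and $zB$ factors of $u$, while each of the remaining $m$ factors of length $m$ has a unique one-block right-extension inside $u$. I would read off the instance I need directly from the rational-approximation picture: part (2) of Proposition~\ref{prop:sturmian} identifies the first $q_k$ blocks of $u$ with a $p_k/q_k$-word, and the ``Furthermore'' clause says every $p_k/q_k$-word extends to a $\theta$-word; together these pin down the factors of length $q_k$ and $q_k-1$.

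The construction is then immediate. Fix $k\ge2$, so that $q_k\ge2$ and there are $q_k\ge2$ admissible block-words of length $q_k-1\ge1$, exactly one of which is right-special. Choose an admissible $z$ of length $q_k-1$ that is \emph{not} right-special. Then exactly one of $zA,zB$ is admissible, say $zc$ with $c\in\{A,B\}$, while the opposite extension $zc'$ is inadmissible. Take the word in question to be $zc'$: it consists of $q_k$ blocks, it is $\theta$-inadmissible, and changing its last block from $c'$ to $c$ — that is, replacing $n$ by $n+1$ or $n+1$ by $n$ in the final block — produces the admissible word $zc$, which is exactly the assertion of the lemma.

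The hard part will be justifying the two structural inputs self-containedly rather than by appeal to the general theory of Sturmian words. First, one must make the word-versus-block admissibility dictionary precise — in particular that a length-$q_k$ block-word is admissible iff its block pattern is a factor of $u$ — handling the partial blocks at the two ends of a window. Second, and more essentially, one must establish the complexity count at the exact scale $q_k$: that the number of admissible block-words of length $q_k$ exceeds that of length $q_k-1$ by exactly one, equivalently the uniqueness of the right-special factor at length $q_k-1$. I expect this balance/complexity statement, which I would extract from the $p_k/q_k$-word structure of Proposition~\ref{prop:sturmian} and Algorithm~\ref{alg:simple_word}, to be the crux of the argument.
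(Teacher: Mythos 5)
Your argument is, in substance, a genuinely different route from the paper's, and at the level of block sequences it is sound. The paper proves the lemma constructively: it runs Algorithm~\ref{alg:simple_word} for $r=p_k/q_k$ from a starting index $l_1$ chosen according to the parity of $k$, changes the last block of the resulting $p_k/q_k$-word (admissibility of the unmodified word being automatic from Proposition~\ref{prop:sturmian}), and proves inadmissibility geometrically: by continued-fraction estimates, any leaf word of slope $p_l/q_l$ ($l$ large) beginning with the same initial run of $n$-blocks must have its first $q_k$ blocks form a $p_k/q_k$-word, which the modified word is not. You instead quote the Sturmian complexity count ($p(m)=m+1$, hence a unique right-special factor of each length) and take the wrong extension of a non-right-special factor of length $q_k-1$. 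Your route is shorter and produces inadmissible words of every block length, not just $q_k$; what it gives up is, first, self-containedness (the complexity facts you defer are classical, but the paper deliberately develops everything from Algorithm~\ref{alg:simple_word} rather than quoting Sturmian theory), and second, constructiveness: the paper's proof doubles as the construction reused verbatim in the next proposition (Algorithm~\ref{alg:tori_segments}), where the prescribed first/last blocks and starting point $\sigma_{q_k}$ are what yield representatives of transverse measure $O(1/q_k)$; a bare existence statement does not feed into that.

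There is, however, one point where your proposal as written would break, and it is not the one you predict. The dictionary you state --- ``a block word is admissible iff its block pattern is a factor of $u$'' --- is false in one direction: a word whose \emph{first} block is an $n$-block can occur in a leaf word shifted, with its initial $\boldb^{n}$ sitting inside a run $\boldb^{n+1}$. Writing $A,B$ for the $n$- and $(n+1)$-blocks: if $\{\theta\}\in(1/3,2/5)$, then $AAA$ is never a block factor, yet its letter realization $(\boldb^{n}\bolda)^{3}$ occurs inside that of the block factor $BAA$ (start reading at the second letter of $\boldb^{n+1}\bolda\boldb^{n}\bolda\boldb^{n}\bolda$). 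So, measured against the paper's literal definition of ``appear in a $\theta$-word,'' your recipe can output admissible words: for $\theta=[n;2,1,c_3,\dots]$ one has $q_2=3$, the factor $z=AA$ is non-right-special, and your output is $AAA$. (To be fair, the paper's own algorithm outputs exactly $AAA$ here too, so the paper is implicitly reading ``appear'' as block-aligned occurrence; under that convention your construction is correct as stated.) The repair, for either proof, is to make the alignment convention explicit: either check that block-aligned inadmissibility suffices downstream --- two infinite words sharing a tail have aligned blocks from the second block of the tail onward, so block-inadmissible subwords still obstruct asymptoticity in Theorem~\ref{thm:tori} --- or force alignment by arranging the first block of the inadmissible word to be an $(n+1)$-block (not always possible in your scheme, e.g.\ at length $1$ when $\{\theta\}>1/2$, where $B$ is the right-special letter). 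So the crux is not the complexity count, which is classical, but this first-block alignment issue.
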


\begin{proof}
We obtain these words by the following algorithm:
\begin{alg}\label{alg:tori_inadmissible_word}
\begin{enumerate}[topsep=0mm, itemsep=0mm]
    \item If $k$ is even, set $l_1=q_k$; otherwise set $l_1=1$.
    \item Run Algorithm~\ref{alg:simple_word} with $r=p_k/q_k$ and $l_1$. Let $w$ be the output.
    \item Change the last block of $w$ from an $n$-block to an $(n+1)$-block, or from an $(n+1)$-block to an $n$-block. Output the modified word.\qed
    \end{enumerate}
\end{alg}
Note that when $k$ is even, the output starts and ends with an $n$-block, while when $k$ is odd, the output starts and ends with an $(n+1)$-block; moreover it is \emph{not} a $p_k/q_k$-word. It remains to show it is $\theta$-inadmissible. For this, it is enough to show that the output is $p_l/q_l$-inadmissible for all large $l$. Indeed, when $k$ is even, $p_{k}/q_{k}<p_l/q_l$ for all $l$ large. Moreover, by properties of continued fraction, $|q_{k}\frac{p_l}{q_l}-p_{k}|<1/q_k\le1/2$. When $\theta\in(n+1/(m+1),n+1/m)$, the word constructed above starts with $m$ $n$-blocks. If we start with $m$ $n$-blocks, and flow along lines of slope $p_l/q_l$ for $q_k$ blocks, we will end up at a point above the starting point, and connecting the segment between this point and the starting point creates a simple closed curve homotopic to a $p_k/q_k$-leaf. In particular, the first $q_k$ blocks of any $p_l/q_l$-word starting with $m$ $n$-block form a $p_{k}/q_{k}$-word. Thus the word constructed in the algorithm above cannot appear. The argument for $k$ odd is entirely analogous.
\end{proof}

\begin{prop}
For any $k\ge2$, there exists an $\theta$-inadmissible word $w_{k}$ of length $\le 2(p_k+q_k)$ so that a representative of this word has transverse measure $\le C/q_{k}$ on $X$, where $C$ is a constant independent of $k$. Moreover, we can connect the starting point of the representative of $w_{k+2}$ and the end point of that of $w_k$ by a line segment of length $<1/q_k$ in the flat metric of $X$ without passing the integral lattice points. 
\end{prop}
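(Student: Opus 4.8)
The plan is to splice together two copies of the near-$p_k/q_k$ word so that the excess homology of the inadmissible word from the preceding lemma is cancelled. Let $u_k$ denote that word of $q_k$ blocks; its class in $H_1(\bar X)=\mathbb{Z}^2$ is $(q_k,p_k\pm1)$, differing from the class $(q_k,p_k)$ of a $p_k/q_k$-geodesic by one extra or missing $\boldb$. I would concatenate it with a companion word $v_k$ of $q_k$ blocks --- a $p_k/q_k$-word with one block decreased (resp.\ increased) --- of homology $(q_k,p_k\mp1)$, and set $w_k:=u_kv_k$. Then $w_k$ has homology $(2q_k,2p_k)$ and length $(p_k+q_k+1)+(p_k+q_k-1)=2(p_k+q_k)$, matching the asserted bound. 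Inadmissibility is immediate: $u_k$ occurs in no $\theta$-word, hence neither does any word having $u_k$ as a prefix, in particular $w_k$.

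To bound the transverse measure I would represent $w_k$ by a path shadowing the $p_k/q_k$-geodesic traversed twice, drawing the admissible portions along honest $\theta$-leaves (where the transverse measure vanishes) and concentrating all of the defect at the single junction. The heuristic target is clear: the doubled geodesic meets $\mathcal{L}$ in $2\,i(\gamma_{p_k/q_k},\mathcal{L})=2|p_k-\theta q_k|$, which is $<2/q_{k+1}\le2/q_k$ by Theorem~\ref{thm:rational_approx}. The main obstacle is the junction. Because $u_k$ and $v_k$ carry homology $(q_k,p_k\pm1)$, each individually displaces the transverse coordinate by a full unit, so the naive piecewise-leaf representative swings up by $\approx1$ and back, giving transverse measure $\approx2$ rather than $O(1/q_k)$. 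The resolution is to route the defect through the cusp: the inadmissibility can be realized by a detour around the integral lattice point met near the midpoint of the doubled geodesic, and such a detour is peripheral, hence contributes zero intersection number with $\mathcal{L}$, while the two remaining strands stay within a band of transverse width $O(1/q_k)$ of the geodesic. Verifying that this cusp detour (i) produces exactly the word $w_k$, (ii) is $\theta$-inadmissible, and (iii) costs only $O(1/q_k)$ in transverse measure --- the last point absorbing the bounded distortion between the flat and hyperbolic metrics into the constant $C=C(\theta)$ --- is the heart of the proof.

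For the concatenation clause I would anchor the representatives on $J_1$ at the starting heights $s_k$ of the $p_k/q_k$-leaves approximating one fixed $\theta$-leaf, of limiting height $s_\infty$. Since $w_k$ has integral homology $(2q_k,2p_k)$, its representative returns to height $s_k$ on $J_1$, so the connector from the end of $w_k$ to the start of $w_{k+2}$ is the vertical segment of length $|s_k-s_{k+2}|$. Theorem~\ref{thm:rational_approx} bounds the offset by $|s_k-s_\infty|=O(1/q_{k+1})$, whence $|s_k-s_{k+2}|<1/q_k$ for $k$ large. The parity restriction $k\equiv k+2$ is exactly what is needed: $p_k/q_k$ and $p_{k+2}/q_{k+2}$ then lie on the same side of $\theta$ (both below for $k$ even, both above for $k$ odd, by the alternation recorded before Theorem~\ref{thm:rational_approx}), so $s_k$ and $s_{k+2}$ lie on the same side of $s_\infty$ and the segment joining them stays inside $(0,1)$, meeting no integral lattice point. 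This furnishes the required connector of length $<1/q_k$.
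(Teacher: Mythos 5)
Your word-level setup is fine and close to the paper's: taking $w_k=u_kv_k$ with $u_k$ the inadmissible word of the preceding lemma makes inadmissibility automatic (any word containing an inadmissible subword is inadmissible), and the length count $2(p_k+q_k)$ is correct. (The paper instead follows $u_k$ by a \emph{partial} honest $p_k/q_k$-word, via Algorithm~\ref{alg:tori_segments}; that difference is inessential.) The genuine gap is in the transverse-measure bound, which you yourself call the heart of the proof and leave unverified --- and the fix you sketch would fail. A ``peripheral detour'' around a lattice point cannot produce the word $w_k$: a loop around the puncture is conjugate to the commutator $[\bolda,\boldb]$, so its cutting sequence necessarily contains inverse letters $\overline\bolda$ or $\overline\boldb$, whereas $u_kv_k$ is a positive word in $\bolda,\boldb$. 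So the curve you build represents a different word, and your own requirement (i) is violated. (Routing through the cusp is exactly the mechanism of the paper's \emph{alternative} construction, Theorem~\ref{thm:tori_alt}; it produces exotic rays, but not representatives of these $w_k$, and hence does not prove this proposition.)

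Moreover, your diagnosis of the obstacle is what hides the actual solution. The homology deficit of one $\boldb$ in $u_k$ does \emph{not} force a unit swing of the transverse coordinate; it only requires the path to pass on the other side of a single lattice point, and the cost of doing so depends on where you do it. The paper's algorithms choose the modification site precisely where this is cheap: the block starting points $\sigma_1,\dots,\sigma_{q_k}$ on $J_1$ are spaced $1/q_k$ apart, and the choice of $l_1$ in Algorithms~\ref{alg:tori_inadmissible_word} and~\ref{alg:tori_segments} places the modified block at the extremal position, where the lowest $(n+1)$-block start $\sigma_{t_k}$ is adjacent to the highest $n$-block start $\sigma_{t_k+1}$. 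The block change is then realized by a \emph{vertical jump of length exactly $1/q_k$}, and the whole representative consists of slope-$p_k/q_k$ segments plus such jumps, giving total transverse measure at most $2q_k\,|\theta-p_k/q_k|+O(1/q_k)=O(1/q_k)$ with no appeal to the cusp. Without this placement argument (or a substitute for it), your outline yields either the wrong word or a representative of transverse measure of order $1$, not $C/q_k$. Two smaller points: your connector estimate is only claimed ``for $k$ large,'' while the statement requires all $k\ge2$; and the closing-up of your representative must be checked geometrically (via compensating jumps), not inferred from the integral homology class alone.
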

\begin{proof}
We obtain these words by the following algorithm:
\begin{alg}\label{alg:tori_segments}
\begin{enumerate}[topsep=0mm, itemsep=0mm]
    \item Run Algorithm~\ref{alg:tori_inadmissible_word} for $\theta$ and $k$. Let $w_k$ be the output.
    \item When $k$ is even, run Algorithm~\ref{alg:simple_word} for $r=p_k/q_k$ and $l_1=t_k+1$ partially, and stop when $l=q_k$ instead of $l=l_1$ in Step 3 there. When $k$ is odd, run Algorithm~\ref{alg:simple_word} for $r=p_k/q_k$ and $l_1=t_k$ partially, and stop when $l=1$. Let the output be $w$.
    \item Let $\tilde w$ be the word $w$ with the first block removed. Set $w_k=w_k\tilde w$. Output $w_k$.\qed
\end{enumerate}
\end{alg}
The word constructed has length less than twice of a $p_k/q_k$ word; the length of a $p_k/q_k$ word is $(n+1)s_k+(n+2)t_k=p_k+q_k$. It remains to construct a representative curve of the word $w_k$ satisfying the conditions. Take any $p_k/q_k$-leaf and draw it as a collection of line segments of slope $p_k/q_k$ in $S$. We can do this by putting equally distant points $\sigma_1,\ldots,\sigma_{q_k}$ on $J_1$ and $\tau_1,\ldots,\tau_{q_k}$ on $J_2$ as the starting and end points of the blocks. We construct a representative of $w_k$ as follows. Assume first $k$ is even. Start from $\sigma_{q_k}$, the lowest starting point of an $n$-block. Flow $q_{k}-1$ blocks along segments of slope $p_k/q_k$, and we arrive at the lowest starting point of an $(n+1)$-block. Go vertically to the highest starting point of an $n$-block. Flow along segments of slope $p_k/q_k$ until we arrive back at $\sigma_{q_k}$. Compared with flowing along lines of slope $\theta$, after the full cycle of a $p_k/q_k$-word, the vertical distance is $q_k\theta-p_k<1/q_k$. Thus after the full cycle of $w_k$, the total vertical shift is $<3/q_k$. This gives that the representative we constructed has transverse measure $\le C/q_k$, for some constant $C$ only dependent on $\theta$.

Finally, as $\sigma_{q_k}$ and the starting point of the representative of $w_{k+2}$ both has $y$-coordinate in $(0,1/q_k)$, we can connect then by a vertical segment of length $<1/q_k$ without passing through integral lattice points. The case when $k$ is odd is entirely analogous.
\end{proof}

We remark that each iteration of Algorithms~\ref{alg:tori_inadmissible_word} and \ref{alg:tori_segments} only depends on the rational number $r=p_k/q_k$ and the oddity of $k$, but not on $\theta$ directly. Now we are in a position to prove Theorem~\ref{thm:tori}, a special case of Theorem~\ref{main1}.
\begin{proof}(of Theorem~\ref{thm:tori})
Let $w_{k}$ be the word constructed in Algorithm~\ref{alg:tori_segments} with $k\ge2$, with the representative curve $C_k$ constructed in the proof there. Given $(i_k)$ Let $w=w_{i_1}w_{i_2}\cdots$, the concatenation of all $w_{i_k}$ for $k\ge1$ in order. A representative of $w$ can be constructed by connecting the end point of $C_{i_k}$ and the starting point of $C_{i_{k+1}}$ by a vertical segment of length $<1/q_{i_k}$. Therefore, the total transverse measure of this representative bounded above by $C'\sum_{k=1}^\infty1/q_{i_k}<\infty$. Finally, any tail of this word contains an inadmissible word; therefore, it is not asymptotic to any leaf.
\end{proof}

\begin{rmk}
\begin{enumerate}[topsep=0mm, itemsep=0mm]
    \item There is an abundance of choices in the construction above. Besides skipping some $w_k$'s, one may also connect $w_{i_k}$ and $w_{i_{k+1}}$ with arbitrarily long segments of $\theta$-words.
    \item As we remarked above, algorithms described in the construction do not depend on the irrational number $\theta$ directly, but on its convergents. In particular, the construction here can be easily converted to a computer program. To output the first few letters of an exotic word we do not need to know the exact value of $\theta$; sufficiently accurate rational approximations suffice.
\end{enumerate}
\end{rmk}

\paragraph{Alternative construction.}
We give an alternative construction of exotic rays that are distinct from those produced above. The key observation is that in $X$, any irrational lamination $\mathcal{L}$ is bounded away from the cusp. An excursion into the cusp thus has very small intersection with $\mathcal{L}$; on the other hand, a geodesic ray asymptotic to a leaf of $\mathcal{L}$ makes no excursion into the cusp, possibly after a finite segment. We have
\begin{thm}\label{thm:tori_alt}
Given an irrational lamination $\mathcal{L}$ of slope $\theta$, there exist uncountably many exotic rays bounded away from $\mathcal{L}$.
\end{thm}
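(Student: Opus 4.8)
The plan is to realize the advertised rays as \emph{piecewise geodesics} that alternate between shadowing $\mathcal{L}$ along finite leaf segments and making ever deeper excursions into the cusp, and then to replace each such ray by its geodesic representative using Proposition~\ref{prop:point_at_infinity}. Fix a horoball neighborhood $N$ of the cusp small enough that $N\cap\mathcal{L}=\varnothing$, write $K=X\setminus N$ for the compact core, and let $R$ be the (unique) complementary region of the minimal filling lamination $\mathcal{L}$, so that $N\subset R$ and $\partial R$ is a union of leaves. Two observations drive the construction. First, any arc contained in $N$ has vanishing transverse measure, since $N$ is disjoint from $\mathcal{L}$; in particular an excursion that enters $N$, winds around the puncture to any prescribed depth, and exits contributes nothing to $I(\mathcal{L},\cdot)$ beyond its short entry and exit arcs. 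Second, since $\mathcal{L}$ is non-atomic, a transversal arc crosses arbitrarily little $\mathcal{L}$-mass once it is short and close to the thin part of the cusp; concretely, the horocycle $\partial N$ at height $H$ has length $\asymp 1/H$ and finite total transverse measure, so the cost of peeling off a leaf into $R$ and later rejoining $\mathcal{L}$ can be made as small as we wish.

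Using these, for each $k$ I would build an excursion gadget $E_k$: follow a leaf of $\mathcal{L}$ to a point where it comes close to $\partial N$, cross transversally into $R$ at transverse-measure cost $\epsilon_k$, wind around the puncture to depth $h_k$ entirely inside $N$ (zero cost), and return to a leaf at cost $\le\epsilon_k$. Concatenating finite leaf segments with the gadgets $E_1,E_2,\dots$ produces a piecewise-geodesic ray $\rho$ with $I(\mathcal{L},\rho)\le 2\sum_k\epsilon_k$, which is finite once $\epsilon_k$ is summable. Taking $h_k\to\infty$ guarantees the excursions are genuinely deep. This $\rho$ is exotic: it meets $\mathcal{L}$ along each leaf segment, hence at arbitrarily large times, so it is not eventually disjoint; and, winding around the puncture infinitely often, its symbolic word carries a peripheral subword in every tail, which no Sturmian $\theta$-word contains, so by the asymptotic-iff-equal-tails criterion of this section it is not asymptotic to any leaf. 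By Proposition~\ref{prop:point_at_infinity} the geodesic ray $r$ with the same endpoint then satisfies $I(\mathcal{L},r)<\infty$. That $r$ is itself exotic follows from the endpoint: it is not asymptotic to a leaf (same tail as $\rho$), and it is not eventually disjoint, because on the once-punctured torus a geodesic eventually contained in $R$ must exit into the cusp or spiral onto a boundary leaf of $R$—so its endpoint would be the parabolic fixed point or the endpoint of a leaf—whereas $r$'s endpoint is neither.

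To obtain uncountably many rays I would let the data defining the gadgets vary, e.g.\ insert at each stage one of two excursion types, indexing the rays by sequences in $\{0,1\}^{\mathbb{N}}$. Arranging the measures so that $\epsilon_k/3>\sum_{j>k}\epsilon_j$, exactly as in the proof of Theorem~\ref{mainprime}, two sequences with distinct tails yield words with distinct tails (the tails carry different total transverse measure) and hence distinct endpoints. Since only countably many sequences share a tail with a given one, uncountably many pairwise-inequivalent choices survive, producing uncountably many exotic rays, each making arbitrarily deep cusp excursions and so bounded away from $\mathcal{L}$ in the required sense.

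The main obstacle is the quantitative claim that an excursion winding arbitrarily deep around the puncture can be realized by a genuine geodesic segment whose transverse measure is concentrated on—and made arbitrarily small by working near—the thin part of the cusp, while the peel-off and re-entry arcs still cross $\mathcal{L}$ transversally; this is where the cusp geometry ($\partial N$ of length $\asymp 1/H$ and non-atomicity of $\mathcal{L}$) must be made precise. A secondary point I would need to verify is that $\rho$, despite its unboundedly deep excursions, remains a proper quasigeodesic converging to a well-defined point of $S^1$, so that Proposition~\ref{prop:point_at_infinity} applies; I expect this from uniform control on the return geometry of the gadgets, but it should be checked.
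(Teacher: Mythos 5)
Your proposal is correct in outline and rests on exactly the key observation the paper uses for Theorem~\ref{thm:tori_alt}: excursions around the cusp cost essentially nothing in transverse measure (an irrational lamination is bounded away from the cusp), while no leaf word contains a peripheral subword, so inserting windings infinitely often kills asymptoticity to leaves and keeps the ray bounded away from $\mathcal{L}$. Where you genuinely differ is in how the ray shadows $\mathcal{L}$ between excursions, and hence where the measure budget is spent. The paper never follows actual leaves: it concatenates closed curves of slope $p_k/q_k$ (the convergents of $\theta$), realized in the flat picture as straight segments joining integer lattice points, with small loops around the lattice points in between; each segment costs $|q_k\theta-p_k|<1/q_{k+1}$ by Theorem~\ref{thm:rational_approx}, so summability is immediate and the whole construction is explicit and algorithmic (Algorithm~\ref{alg:simple_word}). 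You instead follow genuine leaf segments (zero cost) and pay only at the peel-off and re-entry transversals, bounding their cost by minimality (leaves return arbitrarily close to $\partial R$) together with non-atomicity and compactness --- the same kind of soft estimate as Lemma~\ref{lm:jump}. What each buys: the paper's version is computable and its estimates come for free from continued fractions, but it is tied to the torus; yours is softer but would generalize to any cusped surface carrying a non-atomic minimal lamination, a case the paper only reaches through the separate train-track argument. Your two flagged worries are real but fixable, and they are precisely what the explicit flat construction sidesteps: the quantitative gadget lemma does follow from minimality, non-atomicity and compactness as you sketch; and convergence of the piecewise ray to a point of $S^1$ needs no quasigeodesic estimate --- the positive leaf words and the peripheral words concatenate without cancellation, so the ray traverses fundamental domains along an infinite reduced word, the corresponding nested tiles shrink to a single boundary point, and Proposition~\ref{prop:point_at_infinity} then applies. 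Finally, for uncountability you do not need the measure-separation trick from Theorem~\ref{mainprime}: varying the winding numbers of the excursions (as the paper does by varying subsequences) already produces words with visibly distinct tails, hence distinct endpoints.
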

Here, given a geodesic ray $r:[0,\infty)\to X$, we say it is \emph{bounded away} from $\mathcal{L}$ if
$$\limsup_{t\to\infty} d(r(t),\mathcal{L})>0.$$
On the other hand, it is easy to see that the construction given in the proof of Theorem~\ref{thm:tori} produces rays that are getting closer and closer to $\mathcal{L}$.
\begin{proof}
Again, we may assume $\theta>1$. For each $k\in\mathbb{N}$, let $p_k/q_k$ be the $k$-th convergent of $\theta$. Let $w_k$ be the $p_k/q_k$-word obtained by running Algorithm~\ref{alg:simple_word} with $l=s_k+t_k=q_k$. A representative curve of $w_k$ is a line segment of slope $p_k/q_k$ starting at a point slightly above the origin. Let $w=\overline{\boldb}\overline{\bolda}\boldb\bolda$. This corresponds to a loop around the cusp. Consider the word $w_1w^{k_1}w_2w^{k_2}w_3w^{k_3}w_4\cdots$, where $k_1,k_2,k_3,\ldots$ are positive integers. We may represent this word by segments connecting integral points (corresponding to $w_k$) with infinitesimally small loops around these integral points in between. The intersection number of this ray with $\mathcal{L}$ is bounded by $C\sum |q_k\theta-p_k|<\infty$. Moreover, it cannot be asymptotic to any leaf, as it contains $\bar{\bolda}$ and $\bar{\boldb}$ infinitely often. Or, as described above, it wraps around the cusp (an excursion into the cusp) infinitely often, and thus bounded away from $\mathcal{L}$. Uncountability again follows by considering subsequences of $\mathbb{N}$.
\end{proof}
\begin{rmk}
This construction was pointed out to us by C. McMullen, inspired by his paper \cite{local_connectivity}. There, a geodesic ray on $X$ is decomposed into simple segments with excursions into cusps in between.
\end{rmk}

\section{Complementary results}\label{sec:complementary}
In this section, we study general behavior of the intersection of a geodesic ray with $\mathcal{L}$ and prove two main Theorems \ref{thm:linear} and \ref{sublinear}.

Given a geodesic ray $r:[0,\infty)\to X$, let $I_{\mathcal{L},r}(t)$ be the intersection number of the geodesic segment $r([0,t])$ with $\mathcal{L}$. Clearly $I_{\mathcal{L},r}(t)$ is a nonnegative and increasing continuous function in $t$. Given two nonnegative functions $f,g$ defined on $[0,\infty)$, we write $f=O(g)$ if there exist constants $t_0,c>0$ so that $f(t)\le cg(t)$ for all $t\ge t_0$, and write $f\asymp g$ if $f=O(g)$ and $g=O(f)$. We have:
\begin{thm}\label{thm:linear}
For any measured lamination $\mathcal{L}$ and geodesic ray $r$ on $X$, $I_{\mathcal{L},r}(t)=O(t)$. Moreover, for almost every choice of initial vector $r'(0)$ with respect to the Liouville measure, $I_{\mathcal{L},r}(t)\asymp t$. More precisely, $\lim_{t \rightarrow \infty} I_{\mathcal{L},r}(t)/{t}= \ell(\mathcal{L})/(\pi^2(2g-2+n))$. 
\end{thm}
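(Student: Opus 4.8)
The plan is to treat the two assertions separately: the deterministic bound $I_{\mathcal{L},r}(t)=O(t)$ valid for every ray, and the almost-sure linear growth with its explicit slope. For the latter, the central idea is to exploit the \emph{additive cocycle} structure of $t\mapsto I_{\mathcal{L},r}(t)$ over the geodesic flow, which reduces the slope to a single ergodic average of a bounded function, and then to evaluate that average using the geodesic-current identities recalled in \S\ref{sec:background}. Throughout I write $r_v$ for the geodesic ray with $r_v'(0)=v$, set $A_t(v):=I_{\mathcal{L},r_v}(t)$, and let $\bar\lambda:=\lambda/\ell(\lambda)$ be the normalized Liouville current, a $\phi_t$-invariant probability measure on $T_1(X)$ with $\ell(\lambda)=\pi^2(2g-2+n)$.

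First I would establish a uniform per-unit-length bound. Since $\mathcal{L}$ is compact, it is contained in a compact set $K\subset X$ away from the cusps, and $K$ can be covered by finitely many flow boxes in each of which $\mathcal{L}$ has a local product structure of finite total transverse mass. A geodesic segment of length $\le 1$ meets only boundedly many such boxes and, within each, crosses at most its transverse mass; hence there is a constant $M$, depending only on $\mathcal{L}$, with $A_1(v)\le M$ for every $v\in T_1(X)$. By additivity $A_t(v)\le M(\lfloor t\rfloor+1)$, which gives $I_{\mathcal{L},r}(t)=O(t)$ for \emph{every} ray and, simultaneously, the integrability needed below.

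For the almost-sure statement, set $g:=A_1$, a nonnegative function bounded by $M$, hence in $L^1(\bar\lambda)$. The key structural fact is the cocycle identity $A_{m+n}(v)=A_m(v)+A_n(\phi_m v)$, which for integer times yields $A_n(v)=\sum_{j=0}^{n-1}g(\phi_1^j v)$: the growth of $A$ along the orbit is literally the Birkhoff sum of the bounded function $g$ for the time-one map $\phi_1$. Since the geodesic flow on a finite-area hyperbolic surface is mixing for $\bar\lambda$, the map $\phi_1$ is ergodic, so Birkhoff's theorem gives $A_n(v)/n\to\int g\,d\bar\lambda$ for $\bar\lambda$-a.e.\ $v$. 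Monotonicity of $t\mapsto A_t(v)$ upgrades this to the full real-time limit: for $n\le t\le n+1$ one sandwiches $A_n(v)/(n+1)\le A_t(v)/t\le A_{n+1}(v)/n$, and both ends converge to $\int g\,d\bar\lambda$. This gives $I_{\mathcal{L},r}(t)\asymp t$ almost everywhere and identifies the slope as $\int g\,d\bar\lambda$.

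It remains to compute $\int g\,d\bar\lambda$, and this is the step I expect to be the crux. I would recognize $A_1(v)=i(\sigma_v,\mathcal{L})$, where $\sigma_v$ is the current of the unit geodesic arc $r_v|_{[0,1]}$, namely the ($\iota$-symmetrized) measure $\int_0^1(\phi_s)_*\delta_v\,ds$ on $T_1(X)$. Averaging these arcs over all initial vectors collapses them onto the Liouville current: by flow-invariance of $\lambda$, $\int_{T_1(X)}\sigma_v\,d\lambda(v)=\int_0^1(\phi_s)_*\lambda\,ds=\lambda$. Because $i(\cdot,\mathcal{L})$ is linear in its first argument and is computed by integrating a fixed nonnegative transverse-measure kernel against the current, Tonelli's theorem permits interchanging the integrations, giving $\int_{T_1(X)}A_1\,d\lambda=i(\lambda,\mathcal{L})=\ell(\mathcal{L})$ via $i(C,\lambda)=\ell(C)$; dividing by $\ell(\lambda)=\pi^2(2g-2+n)$ yields the asserted slope $\ell(\mathcal{L})/(\pi^2(2g-2+n))$. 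The main obstacle overall is precisely what is hidden in this interchange: the functional $A_t$ is \emph{singular} — for a non-atomic lamination it is not $\int_0^t h(\phi_s v)\,ds$ for any continuous $h$, so it cannot be fed directly into an ergodic theorem for the flow — and one must justify realizing it as an intersection with a genuine current, that the oriented arcs $\sigma_v$ integrate (after symmetrization) to the \emph{unoriented} current $\lambda$, and that Fubini/Tonelli applies to the intersection kernel. Verifying these compatibilities, together with the measurability of $v\mapsto A_1(v)$ and the ergodicity of $\phi_1$, is where the current machinery of \S\ref{sec:background} does the real work.
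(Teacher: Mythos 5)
Your proposal is correct, and although its skeleton (a compactness bound, an ergodic theorem, and an intersection-number identity for the slope) parallels the paper's, both technical steps are carried out by genuinely different means. For the ergodic step, the paper applies the continuous-time Birkhoff theorem to $f(v)=A_1(v)$ and must then prove a separate claim that $\int_0^T f(\phi_t v)\,dt$ and $I_{\mathcal{L},r}(T)$ differ by a bounded amount, argued component by component using Radon--Nikodym and Fubini for the non-atomic part; your cocycle identity $A_n(v)=\sum_{j=0}^{n-1}A_1(\phi_1^j v)$ makes that comparison exact (up to a bounded correction when an integer time lands on an atomic leaf), and the sandwich argument finishes it, at the cost of needing ergodicity of the time-one map $\phi_1$ rather than of the flow --- a genuinely stronger input, but one that does follow from mixing, as you note. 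The more substantive divergence is in identifying the slope: the paper never computes $\int A_1\,d\lambda$ directly; instead it takes a generic recurrent vector $v$, uses the Anosov closing lemma to produce closed geodesics $\gamma_i$ shadowing $r_v([0,T_i])$, observes that $\gamma_i/T_i\to\lambda/(\pi^2(2g-2+n))$ as geodesic currents, and evaluates the limit using Bonahon's continuity theorem for $i(\cdot,\cdot)$ on $\mathcal{C}(X)\times\mathcal{C}_K(X)$, whereas you compute $\int A_1\,d\lambda=i(\lambda,\mathcal{L})=\ell(\mathcal{L})$ by a Crofton-type Tonelli interchange. Your route is more self-contained and quantitative --- it directly proves the ``Crofton formula for measured laminations'' which the paper only records as a remark \emph{after} the theorem --- while the paper's route outsources that computation to the cited continuity of the intersection pairing. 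The one place your write-up needs care is the one you yourself flag: a finite geodesic arc is not a geodesic current (the measure $\sigma_v$ is not flow-invariant), so $i(\sigma_v,\mathcal{L})$ is not literally defined by Bonahon's pairing; the clean implementation is to perform the Tonelli interchange in finitely many flow boxes covering $\mathcal{L}$, writing $A_1(v)$ as the integral, against the transverse measure on each box, of the number of crossings of $r_v([0,1])$ with the plaques, and then noting that the inner integral over $v\in T_1(X)$ of this crossing count equals the plaque length under the paper's normalization of $\lambda$; summing over boxes gives $\ell(\mathcal{L})$. Since you identify exactly this interchange as the crux and its verification is standard, this is a matter of detail rather than a gap, and the proof goes through.
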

Here $\ell(\mathcal{L})$ is the \emph{length} of $\mathcal{L}$; for precise definition see \S \ref{sec:background}. This theorem implies that the set of exotic vectors (i.e.\ initial vectors of exotic rays) has measure zero. However, recall that it is dense in $T_1(X)$ by Corollary~\ref{cor:dense}.

We also show that any growth rate between finite and linear is achieved:
\begin{thm}\label{sublinear}
Suppose the measured lamination $\mathcal{L}$ is not purely atomic. Given a continuous increasing function $f:\mathbb{R}_+\to\mathbb{R}_+$ such that $f(0)=0$ and $\lim_{t\to\infty}f(t)/t=0$, there exists a geodesic ray $r:\mathbb{R}_+\to X$ such that $I_{\mathcal{L},r}\asymp f$.
\end{thm}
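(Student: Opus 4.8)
The plan is to realize the prescribed growth rate by interleaving two kinds of geodesic building blocks—\emph{cheap} segments that travel a long way while crossing $\mathcal{L}$ very little, and \emph{expensive} segments that cross $\mathcal{L}$ at a definite positive rate—to build a piecewise geodesic ray $\rho$ with $I_{\mathcal{L},\rho}\asymp f$, and then to straighten $\rho$ to an honest geodesic ray. As in the proof of Theorem~\ref{mainprime}, since $\mathcal{L}$ is not purely atomic it has a nonatomic minimal component, and it suffices to work in a small neighborhood of this component carried by a train track $\tau_1$; so I may assume $\mathcal{L}$ is minimal and nonatomic. Corollary~\ref{cor:inadmissible} supplies, for each $n$, a segment $l_n$ carried by $\tau_1$ with transverse measure $<1/2^n$; since its transverse measure is tiny while it must return to $b_n$, its length tends to $\infty$, so concatenations of the $l_n$ give cheap blocks of essentially arbitrary length with arbitrarily small intersection rate. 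For the expensive blocks I would use long generic geodesic segments: by Theorem~\ref{thm:linear}, a generic segment of length $\ell$ accumulates intersection comparable to $c\ell$ with $c=\ell(\mathcal{L})/(\pi^2(2g-2+n))>0$, so these raise $I_{\mathcal{L},\cdot}$ at a definite rate.

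I would then build $\rho$ greedily, organized around \emph{value-doubling} windows: choose times $t_0<t_1<\cdots$ with $f(t_{k+1})=2f(t_k)$, and on the $k$-th window spend expensive length $\approx (f(t_{k+1})-f(t_k))/c=f(t_k)/c$ to raise the accumulated intersection from $f(t_k)$ to $f(t_{k+1})$, padding the remaining arclength with cheap blocks so that $\rho$ reaches arclength $t_{k+1}$ exactly. Because $f(t_{k+1})=2f(t_k)$, for every $t$ in the window both $f(t)$ and $I_{\mathcal{L},\rho}(t)$ lie in $[f(t_k),2f(t_k)]$, which forces $I_{\mathcal{L},\rho}\asymp f$ with explicit constants. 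The feasibility point to check is that the expensive length $f(t_k)/c$ does not exceed the window length $t_{k+1}-t_k$; this is exactly where $f(t)/t\to0$ enters, forcing the per-window rates to tend to $0$ as $k\to\infty$, so the requirement holds for all large $k$ (and the $\asymp$ conclusion is only claimed for $t\ge t_0$). Using value-doubling windows, rather than geometric windows in $t$, is essential: it bounds $f(t_{k+1})/f(t_k)$ by $2$ and thereby prevents the multiplicative ratio of $f$ across a window from blowing up, which would otherwise destroy the comparison.

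The final and most delicate step is to straighten $\rho$ to a geodesic ray $r$ while keeping the estimate. Insisting that each block have length bounded below and that turning angles stay away from $\pi$ makes $\rho$ a quasigeodesic in $\mathbb{H}^2$; hence it converges to a point $z\in S^1_\infty$ and, by the Morse lemma, fellow-travels the geodesic ray $r$ to $z$ within a uniform Hausdorff distance $D$. To compare intersection numbers I would argue as in Proposition~\ref{prop:point_at_infinity}: for each $t$, the lift $\widetilde{r}|_{[0,t]}$, the matching initial arc of $\widetilde{\rho}$, and two connecting arcs of length $\le D$ bound a region; since $\widetilde r$ is geodesic, each leaf of $\widetilde{\mathcal{L}}$ meets $\widetilde r$ at most once, so any leaf counted on one ray but not the other must cross a connecting arc. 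As the transverse measure of any arc of length $\le D$ is uniformly bounded, this gives $|I_{\mathcal{L},r}(t)-I_{\mathcal{L},\rho}(s(t))|=O(1)$, where $s(t)$ is the reparametrization between the two fellow-travelers. Combined with $I_{\mathcal{L},\rho}\to\infty$, the additive $O(1)$ error is negligible and yields $I_{\mathcal{L},r}(t)\asymp f(s(t))$.

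I expect the main obstacle to be precisely this straightening, and specifically controlling the reparametrization $s(t)$ well enough that $f(s(t))\asymp f(t)$. The additive discrepancy in $I$ is handled cleanly by the ``a leaf meets a geodesic at most once'' observation together with the bounded transverse measure of the connecting arcs, upgrading the global, finite-intersection statement of Proposition~\ref{prop:point_at_infinity} to a bound uniform in $t$. The subtler issue is that a concatenation of turning geodesic segments is only a $(\lambda,\epsilon)$-quasigeodesic with $\lambda>1$, so a priori $s(t)\asymp t$ rather than $s(t)=t+O(1)$; for a steep (though still sublinear) $f$, a purely multiplicative control $s(t)/t\to1$ need not imply $f(s(t))\asymp f(t)$. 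I would therefore arrange the block lengths to grow so that the turns become sparse and the total arclength defect stays bounded, giving $s(t)=t+O(1)$, after which the value-doubling structure makes $f(s(t))\asymp f(t)$ immediate. Establishing this uniform, additive control of the straightening—rather than merely asymptotic efficiency—is the crux of the argument; it is the generalization of the straightening step already used (via Proposition~\ref{prop:point_at_infinity}) in the proof of Theorem~\ref{thm:tori}.
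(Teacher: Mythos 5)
Your plan takes a genuinely different route from the paper's. The paper's ray travels along leaves of $\tilde{\mathcal{L}}$ (picking up no transverse measure at all) and jumps between nearby leaves along short orthogonal segments whose transverse measure lies in a fixed window $(\tau_1,\tau_2)$ (Lemma~\ref{lm:jump}), always jumping to the same side; consequently the leaves $\ell_1,\ell_2,\dots$ it meets form a nested family, each crossed exactly once, and the rate $f$ is encoded simply by placing the $n$-th jump at arclength $t_n$ with $f(t_n)=n$. Your cheap/expensive decomposition and value-doubling windows are a legitimately different scheme, but as written it has two genuine gaps.

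The most serious gap is the two-sided estimate $|I_{\mathcal{L},r}(t)-I_{\mathcal{L},\rho}(s(t))|=O(1)$. The Jordan-curve argument you invoke (as in Proposition~\ref{prop:point_at_infinity}) proves only one direction: each leaf meets the geodesic $\tilde r$ at most once, hence $I_{\mathcal{L},r}(t)\le I_{\mathcal{L},\rho}(s(t))+O(1)$. The reverse inequality --- the one your lower bound $I_{\mathcal{L},r}\gtrsim f$ actually needs --- fails in general: a leaf may cross the piecewise path $\tilde\rho$ many times, and every crossing counts toward $I_{\mathcal{L},\rho}$, while a leaf crossing $\tilde\rho$ an even number of times need not meet $\tilde r$ at all. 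Your construction specifically invites such recrossings: the cheap blocks $l_n$ of Corollary~\ref{cor:inadmissible} start and end on the same branch $b_n$, so consecutive expensive blocks are anchored in the same small region near the train track and can sweep across essentially the same set of leaves repeatedly, inflating $I_\rho$ with no corresponding gain for $I_r$. Note that the paper never derives its lower bound by comparing $I_r$ with $I_\rho$; it uses separation instead: the straightened ray must cross the nested leaves $\ell_n$ in order, every leaf met by the $n$-th connecting segment separates $\ell_n$ from $\ell_{n+1}$, and therefore the portion of $r$ between consecutive crossings carries measure at least $\tau_1$. To repair your argument you would need either to build such monotonicity into $\rho$ (no leaf crossed twice), or to prove a uniform bound on the multiplicity with which a leaf can cross your quasigeodesic and run the comparison block by block (for the expensive blocks themselves, which are geodesic, a one-sided fellow-traveling comparison does give $I(\text{block})\le I(\text{matching piece of }r)+O(1)$, and this is the germ of a correct fix).

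The second gap is in the straightening. Requiring turning angles bounded away from $\pi$ (as the Morse lemma needs) is incompatible with a bounded total arclength defect: two long geodesic segments meeting at interior angle $\theta<\pi$ satisfy a defect of roughly $\log\bigl(2/(1-\cos\theta)\bigr)>0$, so infinitely many corners force the defect to infinity; the best one can achieve is $s(t)=t-O(\mbox{number of corners})=t-o(t)$, which is exactly the order of error the paper's proof also carries. Relatedly, the steep-$f$ difficulty you devote the last part of your proposal to cannot be overcome by any construction: by Proposition~\ref{linr}, $I_{\mathcal{L},r}(t+1)-I_{\mathcal{L},r}(t)$ is uniformly bounded for every geodesic ray, so a continuous increasing sublinear $f$ that makes unit-time jumps of unbounded relative size admits no ray with $I_{\mathcal{L},r}\asymp f$ whatsoever. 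The theorem is implicitly about $f$ tame enough to exclude this (that is in effect what the paper's assertion ``$d_n\to\infty$'' encodes, an assertion which itself does not follow from sublinearity alone), and for such $f$ an $o(t)$ reparametrization error is harmless and your window feasibility condition does hold for large $k$; so you should drop the unattainable $s(t)=t+O(1)$ claim rather than try to prove it.
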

We remark that this does not imply Theorem~\ref{main1}, as here we only guarantee the existence of a geodesic ray with a specific growth rate, \emph{without} any restriction on its endpoint.

%we consider geodesic rays that have infinite intersection with $\mathcal{L}$, and prove Theorems~\ref{thm:linear} and \ref{sublinear}.

%Recall that given a geodesic ray $r:[0,\infty)\to X$, we define $\mu_r(t):=\mu_\mathcal{L}(r[0,t])$.
We start with the first part of Theorem~\ref{thm:linear}:
%{\color{red} keep it prop?}
\begin{prop}\label{linr}
For any geodesic ray $r:[0,\infty)\to X$, $I_{\mathcal{L},r}(t)=O(t)$.
\end{prop}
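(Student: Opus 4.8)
The plan is to exploit two facts: that $\mathcal{L}$ is by definition a \emph{compact} subset of $X$, so its support never enters the cusps, and that two distinct geodesics in $\mathbb{H}^2$ meet at most once. Together these reduce the estimate to a purely local, uniform bound in a neighborhood of the support of $\mathcal{L}$. Concretely, I would first fix a compact neighborhood $N$ of the support $|\mathcal{L}|$ and cover $N$ by finitely many flow boxes $U_1,\dots,U_m$ for the lamination, each carrying a product structure in which the plaques (local leaves) are indexed by a transversal of finite total transverse measure $m_i=I(\mathcal{L},U_i)$. Let $\delta>0$ be a Lebesgue number for this cover, chosen small enough that any subset of $X$ of diameter $<\delta$ meeting $|\mathcal{L}|$ is contained in a single $U_i$.

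The heart of the argument is the local bound. I would partition $[0,t]$ into $\lceil t/\delta\rceil$ intervals of length at most $\delta$, giving geodesic subsegments $\sigma_1,\dots,\sigma_{\lceil t/\delta\rceil}$ with $\operatorname{diam}(\sigma_j)\le\delta$. If $\sigma_j$ is disjoint from $|\mathcal{L}|$ then $I(\mathcal{L},\sigma_j)=0$; otherwise $\sigma_j$ lies in a single flow box $U_i$. Here is the key geometric input: since $\sigma_j$ is an arc of a geodesic and each plaque of $U_i$ is an arc of a leaf geodesic, lifting to $\mathbb{H}^2$ shows that $\sigma_j$ meets each plaque of $U_i$ \emph{at most once} (two distinct complete geodesics in $\mathbb{H}^2$ intersect at most once). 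Consequently the transverse measure picked up by $\sigma_j$ is bounded by the transverse measure of the whole flow box, namely $I(\mathcal{L},\sigma_j)\le m_i\le C:=\max_i m_i$. Summing by additivity of the transverse measure over the subarcs gives
$$
I_{\mathcal{L},r}(t)=\sum_j I(\mathcal{L},\sigma_j)\le C\left\lceil \tfrac{t}{\delta}\right\rceil\le \tfrac{C}{\delta}\,t+C,
$$
which is exactly $I_{\mathcal{L},r}(t)=O(t)$.

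I expect the main subtlety to be the flow-box bookkeeping rather than any deep estimate. In particular, one must resist the temptation to conclude a \emph{uniform} bound: although $\sigma_j$ crosses each plaque of $U_i$ at most once, the ray $r$ may re-enter the same flow box for many different indices $j$ over $[0,t]$, and on $X$ it genuinely meets a single leaf many times (once per lift it crosses in $\mathbb{H}^2$), which is precisely why the bound is linear in $t$ and not bounded. The at-most-once crossing is valid only within a single small, evenly-covered flow box, and it is this locality that I would emphasize. The remaining points to check carefully are that the $U_i$ can be chosen with finite transverse measure $m_i$ (immediate, as transversals of a measured lamination have finite mass), that the Lebesgue-number choice of $\delta$ genuinely forces each $\mathcal{L}$-meeting subsegment into one flow box, and that the additivity/subadditivity of the transverse measure over the partition is legitimate (endpoints form a finite set and can be taken off any atoms, or one simply uses subadditivity, which suffices for the upper bound).
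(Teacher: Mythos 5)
Your proof is correct, and it shares the paper's overall skeleton---partition the ray into segments of uniformly bounded length, bound the transverse measure of each segment by a uniform constant, and sum---but the way you obtain the uniform per-segment bound is genuinely different. The paper first pushes the lamination off the cusps (any segment inside a cuspidal neighborhood has transverse measure zero), then observes that the geodesic segments of length $a$ meeting the compact core $X^\circ$ form a compact family, identified with $T_1(X^\circ)$, and concludes that their transverse measures are bounded by a constant $b$, giving $I_{\mathcal{L},r}(t)<b\lceil t/a\rceil$. You instead argue locally: cover the compact support $|\mathcal{L}|$ by finitely many flow boxes, take a Lebesgue number $\delta$, and bound the measure of each subsegment of length $\le\delta$ by the transversal mass of the single flow box containing it, using that a geodesic arc crosses each plaque at most once (lift to $\mathbb{H}^2$, where distinct geodesics meet at most once). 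What your route buys is that the key uniform bound is \emph{proved} rather than asserted: the paper's compactness step tacitly uses that the transverse measure is bounded as a function on this compact family of segments, a point that itself requires justification (the function can fail to be continuous when $\mathcal{L}$ has atomic leaves and a segment endpoint lies on one), and your plaque-counting argument is exactly such a justification. What the paper's route buys is brevity, and the same compactness device reappears in the proofs of Proposition~\ref{prop:leaf} and Theorem~\ref{thm:generic}, so it is the natural tool in that section. One cosmetic slip: your flow boxes cover $|\mathcal{L}|$ rather than the whole neighborhood $N$, but since your Lebesgue-number statement only concerns sets meeting $|\mathcal{L}|$, nothing breaks; your closing remarks correctly identify why the bound is linear rather than uniform.
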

\begin{proof}
If $X$ has cusps, there exists a cuspidal neighborhood for each cusp so that the measured lamination is contained in $X^\circ$, the complement of these neighborhoods. Any geodesic segment contained in these neighborhoods has transverse measure zero. The set of geodesic segments on $X$ of length $a$ and intersecting $X^\circ$ is homeomorphic to $T_1(X^\circ)$ and is hence compact. Therefore, the transverse measure of such a segment is less than a constant $b$. If we split $r$ into segments of length $a$, then each segment is either contained in a cuspidal neighborhood or intersects $X^\circ$. Therefore
$$I_{\mathcal{L},r}(t) < b \lceil \frac{t}{a} \rceil.$$
This implies $I_{\mathcal{L},r}(t)=O(t)$, as desired.
\end{proof}

It would be interesting to see what initial vectors give exactly linear growth. It is obvious that if the image of $r$ is a closed curve, then the growth is linear (or constantly zero if the curve is disjoint from $\mathcal{L}$). Moreover we have
\begin{prop}\label{prop:leaf}
Assume $r:[0, \infty) \rightarrow X$ is an eventually simple geodesic ray; more precisely, there exits $t>0$ such that $r:[t, \infty) \to X$ does not intersect itself on $X$. Then $I_{\mathcal{L},r}(t)\asymp t$ or $I_{\mathcal{L},r}(t)\asymp 1$.
\end{prop}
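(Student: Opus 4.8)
The plan is to read off the dichotomy from the forward dynamics of the unit tangent orbit $s\mapsto r'(s)$, and to convert the quantitative (linear) statement into one about geodesic currents. First I would make two harmless reductions. The finite initial segment $r([0,t])$ contributes only $O(1)$ to $I_{\mathcal L,r}$, so I may assume $r$ is simple on all of $[0,\infty)$. If $r$ eventually leaves every compact set it enters a cusp neighborhood disjoint from $\mathcal L$ and stays there, so $I_{\mathcal L,r}$ is eventually constant; hence I may assume the orbit $\{r'(s):s\ge0\}$ has compact closure in $T_1X$ and nonempty $\omega$-limit set $\Omega\subset T_1X$. Since $\Omega$ is closed and flow-invariant, every complete geodesic tangent to a vector of $\Omega$ is a Hausdorff limit of subarcs of the \emph{single embedded} ray $r$; such limits are simple and pairwise disjoint, so $\Omega$ is the tangent lift of a geodesic lamination. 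By the standard structure theory (the forward end of a leaf of a geodesic lamination spirals onto a unique minimal sublamination; see \cite{com.tr}) this lamination is a single \emph{minimal} lamination $\Omega_0$, and $r'(s)\to\Omega$ as $s\to\infty$.

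The first key step is to show that $r$ is forward-asymptotic to a leaf of $\Omega_0$. Fix a lift $\tilde r$ with forward endpoint $\xi\in S^1_\infty$. For each large $s$ the vector $r'(s)$ is within $\epsilon(s)\to0$ of $\Omega$, so some lift $\tilde\ell_s$ of a leaf of $\Omega_0$ passes $\epsilon(s)$-close to $\tilde r(s)$ and is nearly tangent there; as $\tilde r(s)\to\xi$, the endpoints of $\tilde\ell_s$ converge to $\xi$. The set of endpoints of leaves of $\tilde\Omega_0$ is closed (the leaf space of the compact minimal lamination $\Omega_0$ is compact), so $\xi$ is the endpoint of an honest leaf $\ell$ of $\Omega_0$. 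Thus $\tilde r$ and $\tilde\ell$ share $\xi$, i.e. $r$ is asymptotic to $\ell$.

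I would then compare $\Omega_0$ with $\mathcal L$ in three cases:
\begin{enumerate}[topsep=0mm, itemsep=0mm]
    \item $\Omega_0$ is disjoint from $\operatorname{supp}\mathcal L$: then $r(s)\to\Omega_0$ stays in a neighborhood disjoint from $\mathcal L$, so $I_{\mathcal L,r}$ is eventually constant, giving $I_{\mathcal L,r}\asymp1$ (or $\equiv0$).
    \item $\Omega_0\subseteq\operatorname{supp}\mathcal L$ (forced whenever $\Omega_0$ and $\mathcal L$ meet but do not cross, since $\Omega_0$ is minimal): then $\ell$ is a leaf of $\mathcal L$, so $r$ is asymptotic to a leaf of $\mathcal L$, and Proposition~\ref{prop:point_at_infinity} gives $I_{\mathcal L,r}<\infty$, hence $I_{\mathcal L,r}\asymp1$.
    \item $\Omega_0$ crosses $\mathcal L$ transversally: here I use currents. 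Let $\mu_T=\tfrac1T\bigl(r|_{[0,T]}\bigr)$, symmetrized to a probability current on $T_1X$. Since $r|_{[0,T]}$ is simple apart from $O(T)$ crossings with the fixed initial arc, $i(\mu_T,\mu_T)=O(1/T)\to0$. Any weak-$*$ subsequential limit $\mu_\infty$ is supported on $\Omega$ (the orbit spends all but bounded time near $\Omega$) and, by continuity of the intersection number \cite{Bon.gc}, has $i(\mu_\infty,\mu_\infty)=0$; by the light-cone characterization \cite{Bon.gc} it is a measured lamination carried by $\Omega_0$. As $I_{\mathcal L,r}(T)/T=i(\mathcal L,\mu_T)$, and since $\Omega_0$ is minimal and crosses $\mathcal L$, every nonzero measured lamination on $\Omega_0$ has positive intersection with $\mathcal L$; the normalized such measures form a compact set on which $i(\mathcal L,\cdot)\ge c>0$. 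Hence $\liminf_T I_{\mathcal L,r}(T)/T\ge c>0$, and with Proposition~\ref{linr} this yields $I_{\mathcal L,r}\asymp t$.
\end{enumerate}

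I expect the main obstacle to be the structural input of the first two paragraphs, namely that the $\omega$-limit of an eventually simple ray is the tangent lift of a \emph{single minimal} lamination $\Omega_0$ onto a leaf of which $r$ is asymptotic; this is where the embeddedness of the tail is essential and must be used carefully. The second delicate point is the identification of weak-$*$ limits $\mu_\infty$ as measured laminations: one must verify that no mass escapes into the cusps (guaranteed by staying in the compact core) and that the self-intersection is genuinely sublinear. Once these are in place, the trichotomy is exhaustive and the two handled subcases give $\asymp1$ while the crossing case gives $\asymp t$, completing the proof.
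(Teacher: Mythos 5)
Your trichotomy and the reduction steps mirror the paper's proof, but your ``first key step'' contains a genuine gap, and it is the crux of the whole argument. You assert that the set of endpoints of leaves of $\tilde\Omega_0$ is closed in $S^1$ because ``the leaf space of the compact minimal lamination $\Omega_0$ is compact.'' Both claims are false. The space of lifted leaves is never compact (translates of a leaf exit every compact set of geodesics, with both endpoints degenerating to a single boundary point), and the endpoint set is never closed: it is a proper subset of $S^1$ (countable when $\Omega_0$ is a closed geodesic --- a case your statement must cover, since eventually simple rays can spiral onto closed geodesics --- and of measure zero in general), while its closure is all of $S^1$, because $\overline{\tilde\Omega_0}\cap S^1$ is a nonempty closed $\Gamma$-invariant subset of the limit set. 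Quantitatively, the leaves $\tilde\ell_s$ you extract pass within $\epsilon(s)$ of $\tilde r(s)$, nearly tangentially; in the upper half-plane model with $\xi=\infty$ and $\tilde r(s)=ie^{s}$, such a leaf has its second endpoint at Euclidean distance roughly $e^{s}\epsilon(s)$ from the origin, so the second endpoints stay bounded away from $\xi$ (which is what you need to extract an honest limiting leaf through $\xi$) only if $\epsilon(s)=O(e^{-s})$ along a subsequence --- and nothing in your setup provides that decay. Note also that your argument for this step uses only the tangential convergence $r'(s)\to\Omega$ and never the embeddedness of the tail; but mere convergence into small neighborhoods of a lamination cannot force asymptoticity to a leaf --- that is exactly the phenomenon (exotic rays) that Theorems~\ref{main1} and \ref{mainprime} establish, so any proof of your key step that does not use simplicity in an essential way is doomed.

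The correct route --- and essentially what the paper compresses into the sentence ``Otherwise, $r$ converges to a leaf of a lamination $\mathcal{L}_1$'' --- runs through embeddedness: every leaf of $\Omega$ is a $C^1$-limit of tails of $r$, so the tail of $r$ cannot cross any leaf of $\Omega$ transversally (such a crossing would produce a self-crossing of the tail); hence the tail either lies inside $\Omega$, making $r$ eventually a half-leaf of a minimal sublamination, or is disjoint from a minimal sublamination $\Omega_0$ and so lies in a single complementary region $U$ of $\Omega_0$. Complementary regions of geodesic laminations on finite-area surfaces are finite-sided, so the ideal boundary of a lift $\tilde U$ consists only of endpoints of boundary leaves and parabolic fixed points; the parabolic case is excluded since $\omega(r)\neq\varnothing$, whence $\xi$ is an endpoint of a boundary leaf and $r$ is asymptotic to that leaf. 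With this step repaired, your case analysis coincides with the paper's, and your treatment of the transverse case via currents (weak-$*$ limits $\mu_\infty$ of normalized segment measures, $i(\mu_\infty,\mu_\infty)=0$, the light-cone characterization, and a uniform lower bound for $i(\mathcal{L},\cdot)$ on normalized measured laminations supported on $\Omega_0$) is a legitimate alternative to the paper's more elementary argument (a compactness argument producing a length $C$ and a constant $c>0$ so that every length-$C$ segment of a leaf of $\mathcal{L}_1$ has transverse measure at least $c$); it just needs the routine extra care that $\mu_T$ is not itself flow-invariant, so Bonahon's continuity of the intersection number applies to the limiting current rather than literally to the $\mu_T$.
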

\begin{proof}
If $r$ converges to a closed curve then the growth rate is linear. Otherwise, $r$ converges to a leaf of a lamination $\mathcal{L}_1$. So we might as well assume $r$ is part of a leaf of a minimal lamination $\mathcal{L}_1$.

If $\mathcal{L}_1$ is a minimal component of $\mathcal{L}$, we already know $I_{\mathcal{L},r}(t)\asymp 1$. If $\mathcal{L}_1$ and $\mathcal{L}$ are disjoint, then clearly $I(\mathcal{L},r)<\infty$ so $I_{\mathcal{L},r}(t)\asymp1$ as well. Thus we may assume $\mathcal{L}$ and $\mathcal{L}_1$ intersect transversely.

We claim that there exists a constant $C$ so that for any segment $L$ of length $C$ on a leaf of $\mathcal{L}_1$, $I(\mathcal{L},L)>0$. Suppose otherwise. Then there exists $C_i\to\infty$ so that a segment $L_i$ of length $C_i$ on a leaf of $\mathcal{L}_1$ is contained in a complementary region. We may as well assume the starting point $p_i$ of $L_i$ lies on a leaf of $\mathcal{L}$. Taking a subsequence if necessary, we may assume $p_i\to p$. Moreover, since each $L_i$ lies on a leaf of $\mathcal{L}_1$ and the length of $L_i$ goes to $\infty$, $p$ lies on a leaf of $\mathcal{L}_1$ and the half leaf starting at $p$ is contained entirely in a complementary region of $\mathcal{L}$. So either this half leaf of $\mathcal{L}_1$ is asymptotic to a leaf of $\mathcal{L}$, or is contained entirely in a subsurface disjoint from $\mathcal{L}$. Either contradicts our assumption that $\mathcal{L}$ and $\mathcal{L}_1$ intersect transversely.

By compactness, there exists $c>0$ so that any segment $L$ of length $C$ on a leaf of $\mathcal{L}_1$ has $I(\mathcal{L},L)>c$. Dividing the geodesic ray $r$ into segments of length $C$, we conclude that $I_{\mathcal{L},r}(t)>ct$ with a possibly smaller constant $c>0$. Together with Proposition~\ref{linr}, we have $I_{\mathcal{L},r}(t)\asymp t$.
\end{proof}

It turns out that linear growth is typical with respect to the Liouville measure $\lambda$:

\begin{thm}\label{thm:generic}
For a.e.\ geodesic ray $r$, we have $I_{\mathcal{L},r}(t) \asymp t$. Moreover,
$$\lim_{t \rightarrow \infty}\frac{I_{\mathcal{L},r}(t)}t= \frac{1}{\pi^2(2g-2+n)}\ell(\mathcal{L}).$$
\end{thm}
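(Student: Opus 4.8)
The plan is to deduce the theorem from Birkhoff's ergodic theorem for the geodesic flow, combined with Bonahon's continuity of the intersection number and the normalization $i(C,\lambda)=\ell(C)$. Throughout write $\ell(\lambda)=\pi^2(2g-2+n)$ and let $\bar\lambda=\lambda/\ell(\lambda)$ be the normalized Liouville current, viewed simultaneously as a probability measure on $T_1(X)$ and as a geodesic current. Since a nonempty lamination has $\ell(\mathcal{L})>0$, the asserted limit is positive, so once it is established the relation $I_{\mathcal{L},r}(t)\asymp t$ follows from it together with Proposition~\ref{linr}.

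First I would recall that $\phi_t$ preserves $\lambda$ and is ergodic (indeed mixing), so I may fix the initial vector $v=r'(0)$ in a full $\lambda$-measure set. For such $v$ let $\gamma_t(v)=\{\pi(\phi_s v):0\le s\le t\}$ be the orbit segment of length $t$; by definition its transverse measure against $\mathcal{L}$ is exactly $I_{\mathcal{L},r}(t)$. To turn this \emph{segment} into a genuine geodesic current I would \emph{close it up}: for $\lambda$-a.e.\ $v$ the orbit returns to the compact core $X^\circ$ with positive frequency (Poincar\'e recurrence and ergodicity), so I can choose $t_j\to\infty$ with both $r(0)$ and $r(t_j)$ in $X^\circ$ and join them by a geodesic arc $\alpha_j$ of uniformly bounded length. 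The geodesic representative $\hat\gamma_j$ of the resulting loop is a bona fide closed geodesic, hence a current of length $t_j+O(1)$, and $i(\mathcal{L},\hat\gamma_j)$ differs from $I_{\mathcal{L},r}(t_j)$ only by the transverse measure of $\alpha_j$, which is $O(1)$ because $\mathcal{L}$ is compact and $\alpha_j$ can be kept in the core with bounded length.

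The central step is the weak-$*$ convergence $\tfrac{1}{t_j}\hat\gamma_j\to\bar\lambda$ in the space of currents $\mathcal{C}(X)$, valid for $\lambda$-a.e.\ $v$. This is where Birkhoff enters: a flow-invariant measure on $T_1(X)$ is precisely a geodesic current (disintegrate along flow lines), and for every fixed compactly supported continuous test function the time average of $v$ over $[0,t]$ converges to its $\bar\lambda$-average. The bounded closing arcs contribute only $O(1)$ mass, which vanishes after dividing by $t_j$, so the normalized closed-up currents have the same weak-$*$ limit as the orbit-segment empirical currents, namely $\bar\lambda$. Since $\mathcal{L}$ is compactly supported it lies in $\mathcal{C}_K(X)$ for a compact $K$, so the continuity theorem quoted in \S\ref{sec:background} applies to the functional $C\mapsto i(C,\mathcal{L})$; using bilinearity of $i$ and $i(\mathcal{L},\hat\gamma_j)=I(\mathcal{L},\hat\gamma_j)$ for the closed geodesic $\hat\gamma_j$, I obtain
$$\frac{I_{\mathcal{L},r}(t_j)}{t_j}=\frac{i(\hat\gamma_j,\mathcal{L})+O(1)}{t_j}=i\!\left(\frac{\hat\gamma_j}{t_j},\mathcal{L}\right)+o(1)\longrightarrow i(\bar\lambda,\mathcal{L})=\frac{i(\lambda,\mathcal{L})}{\ell(\lambda)}=\frac{\ell(\mathcal{L})}{\pi^2(2g-2+n)}.$$
To upgrade from the subsequence $(t_j)$ to the full limit $t\to\infty$, I would use that $I_{\mathcal{L},r}$ is monotone together with the linear upper bound $I_{\mathcal{L},r}(t)=O(t)$ from Proposition~\ref{linr}, which control the values between consecutive $t_j$.

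The hard part will be making the current-level equidistribution rigorous for \emph{a.e.} individual orbit and controlling the closing procedure in the cusps: Birkhoff gives convergence of scalar averages against each fixed test function, and one must promote this to weak-$*$ convergence in $\mathcal{C}(X)$ without loss of mass, while simultaneously guaranteeing that the closing arcs $\alpha_j$ stay in the compact core with bounded length (so that both their contribution to the current and their transverse measure against $\mathcal{L}$ are $O(1)$). Once the non-compactness from the cusps is handled in this way, the rest of the argument is the formal interplay of bilinearity, continuity, and the normalization $i(\lambda,\mathcal{L})=\ell(\mathcal{L})$.
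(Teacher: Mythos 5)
Your proposal is correct in substance and rests on the same pillars as the paper's proof---ergodicity and Birkhoff for the geodesic flow, closing long orbit segments into closed geodesics, weak-$*$ convergence of the normalized closed geodesics to the Liouville current, and Bonahon's continuity of $i(\cdot,\cdot)$ together with the normalization $i(C,\lambda)=\ell(C)$---but it decomposes the argument differently. The paper first applies Birkhoff to the auxiliary function $f(v)=$ transverse measure of the unit-length orbit segment at $v$, and proves (Claim~\ref{int.f}, via Radon--Nikodym and Fubini) that $\int_0^T f(\phi_t v)\,dt$ differs from $I_{\mathcal{L},r}(T)$ by $O(1)$; this yields existence of the full limit $\lim_{t\to\infty}I_{\mathcal{L},r}(t)/t$ at once, and the closed-geodesic approximation (Anosov closing lemma plus continuity of $i$) is used only to identify the constant. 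You bypass $f$ and Claim~\ref{int.f} entirely and work with currents throughout, which is a clean shortcut to the value of the limit, but it obliges you to upgrade from the return-time subsequence $(t_j)$ to all $t$, and that upgrade is the one place where your justification is imprecise: monotonicity of $I_{\mathcal{L},r}$ together with the $O(t)$ bound of Proposition~\ref{linr} do \emph{not} suffice; what is needed is $t_{j+1}/t_j\to 1$, i.e.\ that gaps between successive visits to the compact core are $o(t_j)$. This does follow from your own setup---Birkhoff equidistribution gives the set of core times a positive density, which forces sublinear gaps---but it must be said, since mere ``positive frequency'' of returns only yields $c/\Lambda\le\liminf\le\limsup\le c\Lambda$ with $\Lambda=\limsup t_{j+1}/t_j$. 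Similarly, your claim $i(\mathcal{L},\hat\gamma_j)=I_{\mathcal{L},r}(t_j)+O(1)$ deserves a word of proof: one inequality is free because geodesic representatives minimize intersection, while the other requires that $\hat\gamma_j$ stays within bounded Hausdorff distance of the orbit segment (stability of quasi-geodesics, or the Anosov closing lemma as the paper invokes), so that all but a bounded amount of transverse measure crossing the segment also crosses $\hat\gamma_j$.
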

\begin{proof}
Recall that we denote the geodesic flow on $T_1(X)$ by $\phi_t$, which is ergodic with respect to the Liouville measure $\lambda$ \cite{erg.g.f}. Moreover, $\lambda/(\pi^2(2g-2+n))$ is a probability measure on $T_1(X)$. Define a function $f$ on $T_1(X)$ as follows: $f(v)$ is the transverse measure (with respect to $\mathcal{L}$) of the geodesic segment of unit length with initial tangent vector $v$. By continuity of $f$ and Birkhoff's ergodic theorem, for almost every $v \in T_1(X)$ we have:
\begin{equation}\label{erg}
\lim_{T \rightarrow \infty} \frac{1}{T} \int_{0}^{T} f(\phi_t(v)) dt= c, 
\end{equation}
where $c$ is a constant equal to $1/(\pi^2(2g-2+n)) \int_{T_1(X)} f d\lambda$.

We write $f \simeq g$ if the difference between $f$ and $g$ is less than a constant.

\begin{clm}\label{int.f}
The integral of $f$ along a geodesic segment is roughly the total transverse measure of the segment. In other words, we have:
$$\int_{0}^{T} f(\phi_t(v)) dt \simeq I_{\mathcal{L},r}(T).$$
\end{clm}
\begin{proof}
It is enough to prove the claim for each minimal component of $\mathcal{L}$.

If a component is an atom, i.e.\ a simple closed geodesic $\gamma$, $I_{\mathcal{L},r}(T)$ gives the number of intersections of $r([0,T])$ with $\gamma$, while the integral can be larger if $r([T,T+1])$ also intersects $\gamma$. Since the number of intersections of a unit length geodesic segment with $\gamma$ is bounded above, we have the desire relation.

If a component is not an atom, then the transverse measure induced on $r$ is absolutely continuous with respect to the length measure $ds$. Therefore, by Radon-Nikodym theorem, there is a measurable function $\tau$ on $r$ such that
$$ I(\mathcal{L},r[t_1,t_2])= \int_{t_1}^{t_2} \tau(r(t)) dt.$$
In particular, we can write $f(v)$ as $\int_{0}^{1} \tau(\phi_t(v)) dt$.  
Therefore, we have:
$$
\int_{0}^{T} f(\phi_t(v)) dt = \int_{0}^{T} \int_{0}^{1} \tau(\phi_{t+s}(v)) ds dt= \int_{0}^{1} \int_{0}^{T} \tau(\phi_{t+s}(v)) dt ds
$$
by Fubini's Theorem. Now it is easy to see
$$
\int_{0}^{1} \int_{0}^{T} \tau(\phi_{t+s}(v)) dt ds \simeq \int_{0}^{1} \int_{0}^{T} \tau(\phi_{t}(v)) dt ds= \int_{0}^{T} \tau(\phi_{t}(v)) dt= I_{\mathcal{L},r}(T).
$$
Here we used the fact that the transverse measure of geodesic segments of length at most one is uniformly bounded.
\end{proof}

Claim~\ref{int.f} and Equation~\ref{erg} imply $\lim_{t\to\infty}I_{\mathcal{L},r}(t)/t=c$. In order to find $c$ we show for a sequence $T_i \rightarrow \infty$, $\lim \limits_{i \rightarrow \infty} I_{\mathcal{L},r}(T_i)/T_i=\ell(\mathcal{L})$.

Let $\alpha_v^T=\frac1T(\phi_t(v))_*dt$ be $1/T$ of the pushforward of the Lebesgue measure $dt$ on $[0,T]$ via $\phi_t(v)$. From Equation~\ref{erg}, we can see that for a generic $v\in T_1(X)$, $\alpha_v^T$ converges to $\lambda/(\pi^2(2g-2+n))$ when $T \rightarrow \infty$. Moreover, if $v$ is generic, the geodesic ray $\phi_t(v)$ is recurrent, and so there exists a sequence $T_i\to\infty$ such that $\phi_{T_i}(v)\to v$. By Anosov closing lemma \cite{erg.g.f}, there is a closed geodesic $\gamma_i$ very close to the geodesic segment from $v$ to $\phi_{T_i}(v)$. Therefore, $\frac{1}{T_i}\gamma_i$ converges to $\lambda/(\pi^2(2g-2+n))$ as a sequence of geodesic currents. Moreover, we have
$$\frac{1}{T_i} \int_{0}^{T_i} f(\phi_t(v)) dt \simeq \frac{1}{T_i} i(\gamma_i,\mathcal{L})$$
when $T_i$ is large enough.
On the other hand, by continuity of the intersection number we have: 
$$
\lim_{T_i \rightarrow \infty} \frac{1}{T_i}i(\gamma_i, \mathcal{L})=i(\lambda, \mathcal{L})/(\pi^2(2g-2+n))= \ell(\mathcal{L})/(\pi^2(2g-2+n)).
$$
Therefore $I_{\mathcal{L},r}(t) \simeq t\ell(\mathcal{L})/(\pi^2(2g-2+n))$.
\end{proof}
\begin{rmk}
The statement of Theorem~\ref{thm:generic} can be interpreted as an extension of Crofton formula to measured laminations. Crofton formula relates the expected number of times a random line interesting a curve $\gamma$ on Euclidean plane to the length of $\gamma$ (see \cite{intgeobook}). 
\end{rmk}
Proposition~\ref{linr} and Theorem~\ref{thm:generic} imply Theorem~\ref{thm:linear}. From this result, we conclude the collection of exotic vectors has measure zero, even though it is dense by Cor.~\ref{cor:dense}.

%Recall that any measured lamination $(\mathcal{L},\mu)$ on $X$ gives a $\pi_1(X)$-invariant measured lamination $(\tilde{\mathcal{L}},\tilde\mu)$ on the universal cover $\mathbb{H}^2$. We first prove the following basic facts about rays with finite total transverse measure.

Finally, we give a proof of Theorem~\ref{sublinear}, using some elementary hyperbolic geometry. We construct $r$ in the following way. Start from a point on $\tilde{\mathcal{L}}$ and go along the leaves. At some points we take a small jump (by taking a segment transverse to $\tilde{\mathcal{L}}$) to the right of the leaf which we are on, then we land on another point of $\tilde{\mathcal{L}}$ and continue going along the leaves. If we choose the times to jump and segments properly then we can obtain a ray with the growth rate of transverse measure $\asymp f$. In preparation for a rigorous argument, we need the following lemma:
\begin{lm}\label{lm:jump}
\begin{enumerate}[label=\normalfont{(\arabic*)}, topsep=0mm, itemsep=0mm]
    \item\label{item: constants} For any $M>0$, there exists positive constants $d_1<d_2,\tau_1<\tau_2$ so that For any point $x$ on a leaf of $\tilde{\mathcal{L}}$, there exists a point $x_0$ on the same leaf with $d(x_0,x)\le M$ so that on either side of the leaf, there exists a geodesic segment $s$ satisfying:
    \begin{enumerate}[label=\normalfont{(\alph*)}, topsep=0mm, itemsep=0mm]
        \item Segment $s$ starts at $x_0$, orthogonal to the leaf and ends on another leaf;
        \item Length of $s$ is in $(d_1,d_2)$;
        \item Transverse measure of $s$ is in $(\tau_1,\tau_2)$.
    \end{enumerate}
    \item\label{item: cross_ratio} Furthermore, for each segment $s$ in \emph{(1)}, the end points of the two leaves containing $\partial s$ has cross ratio bounded away from $0$ and $\infty$.
\end{enumerate}
\end{lm}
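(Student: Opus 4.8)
The plan is to prove both parts by descending to the finite-area quotient $X$ and combining compactness with the minimality and non-atomicity of the lamination. First I would reduce to the case where $\mathcal{L}$ is minimal and non-atomic: since (in the setting of Theorem~\ref{sublinear}) $\mathcal{L}$ is not purely atomic, it has a non-atomic minimal component, and all jumps will be made inside a small neighborhood of it, exactly as in the reduction opening the proof of Theorem~\ref{mainprime}. Every quantity in the statement --- the leaf through $x$, an orthogonal segment $s$ on a prescribed side, its length, its transverse measure, and the cross ratio of the two endpoint-pairs --- is $\Gamma$-invariant, so it suffices to produce constants uniform over the compact picture on $X$. Here I use that the support of $\mathcal{L}$ is compact and bounded away from the cusps, that the set $V\subset T_1(X)$ of unit vectors tangent to leaves is compact, that the transverse measure is non-atomic with complementary gaps of zero measure and no isolated leaves, and that the transverse measure $m_v(t)$ of the length-$t$ geodesic orthogonal to the leaf through $v\in V$ is jointly continuous and nondecreasing in $t$ (the continuity already used in Proposition~\ref{linr}).

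Next I would fix the transverse measure by a \emph{level-$\tau^\ast$ jump}. Choose a small $\tau^\ast>0$ and set $\tau_1=\tau^\ast/2$, $\tau_2=2\tau^\ast$. For $v\in V$ and a chosen side, extend the orthogonal geodesic until the accumulated transverse measure first equals $\tau^\ast$, and let $L(v)$ be the length at which this occurs. Because $m_v(\cdot)$ is continuous and is constant across gaps, the first time the measure reaches $\tau^\ast$ is realized at a point lying on a leaf $\ell'$; this is the point I would verify carefully, showing that $L(v)=\inf\{t:m_v(t)\ge\tau^\ast\}$ is attained on $\mathcal{L}$. The resulting segment $s$ then has transverse measure exactly $\tau^\ast\in(\tau_1,\tau_2)$, giving (c). The lower length bound in (b) is then automatic: by compactness and non-atomicity the modulus $\omega(\delta):=\sup_{v\in V}m_v(\delta)$ tends to $0$ as $\delta\to0$ (a limiting vector would otherwise carry a transverse atom), so a segment of transverse measure $\ge\tau_1$ must have length at least some $d_1>0$.

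The heart of the argument is the uniform upper bound $d_2$ on $L$, together with the requirement that a good basepoint be found within distance $M$ of \emph{every} $x$, for \emph{every} $M>0$. The difficulty is that $L(v)$ can be large or infinite at \emph{bad} vectors, whose orthogonal ray escapes into a complementary region or a cusp, or becomes asymptotic to a leaf, before collecting measure $\tau^\ast$. I would first note that $G:=\{v\in V:L(v)<d_2\text{ on both sides}\}$ is open, by upper semicontinuity of $L$ coming from continuity of $m_v$. Then I would argue that $G$ is nonempty and, for $d_2$ chosen large, that its complement is nowhere dense: a bad vector can be perturbed transversally so that its ray crosses the next leaf and resumes accumulating measure, so the bad set has empty interior. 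Finally, minimality says every half-leaf is dense in $\mathcal{L}$, so every leaf's orbit enters the open set $G$; combined with $G^{c}$ being nowhere dense this makes the good basepoints \emph{dense} along every leaf, which delivers a good $x_0$ within distance $M$ for all $M>0$ and all $x$, establishing (a) and the upper bound in (b). I expect this to be the main obstacle, since for small $M$ mere syndeticity of $G$ (which would follow from recurrence alone) is not enough --- one genuinely needs density along individual leaves, and one must control the non-generic escaping and leaf-asymptotic directions.

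For part (2), I would deduce the cross ratio bound from (1) by compactness. The data $(v,v',L(v))$, where $v$ is tangent to $\ell$ at $x_0$, $v'$ is tangent to $\ell'$ at the far end of $s$, and $L(v)\in[d_1,d_2]$, range over a compact set on which the cross ratio of the four endpoints of $\ell$ and $\ell'$ is continuous. Since $|s|<d_2$, the common perpendicular distance between $\ell$ and $\ell'$ is bounded above, ruling out the degeneration at infinite separation. The remaining point is to exclude the degeneration where $\ell$ and $\ell'$ share an ideal endpoint (where the cross ratio collapses to $0$ or $\infty$), i.e.\ to show the landing leaf is never asymptotic to $\ell$; this is the one delicate point of part (2), which I would handle using the positive lower bounds $\tau_1$ and $d_1$ together with a genericity choice of $\tau^\ast$ so that the configuration locus is a compact set disjoint from the degenerate locus. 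Compactness then confines the cross ratio to a compact subset of $(0,\infty)$, uniformly in $x$.
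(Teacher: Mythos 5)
Your overall scheme (descend to the compact picture on $X$, fix the transverse measure of the jump, get $d_1$ from a modulus-of-continuity/compactness argument, and get part (2) by a compactness/degeneration argument) is reasonable, and several individual steps are sound. But the step you yourself flag as the main obstacle --- producing a good basepoint $x_0$ within distance $M$ of \emph{every} $x$, with a uniform upper bound $d_2$ --- has a genuine gap. You argue that the good set $G$ is open, that its complement is nowhere dense (via transversal perturbation), and that minimality then makes good basepoints dense along every leaf. This last implication is a non sequitur: in a minimal, non-atomic lamination no leaf is isolated, so leaf arcs (plaques) have empty interior in the lamination; consequently an open dense subset $G$ of $V$ need not meet a given arc of a given leaf at all, let alone be $M$-dense along every leaf. (Density of $G$ in $V$ only guarantees that $G$ meets every nonempty \emph{open} subset of $V$, and leaf arcs are not open.) Transversal perturbation is of no help here, since it moves you to a different leaf, whereas the lemma requires a good point on the \emph{same} leaf within distance $M$. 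So your argument delivers neither density along individual leaves nor, what is actually needed, the quantitative statement that every point of every leaf lies within distance $M$ of a point where both perpendiculars behave well, with constants independent of the point.

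What the paper does instead, and what your proposal is missing, is a concrete geometric finiteness statement: the only truly bad basepoints are those whose perpendicular ray stays forever inside a complementary region (necessarily converging to an ideal vertex of that region), and on the quotient surface there are only \emph{finitely many} such points, since there are finitely many complementary regions, each with finitely many spikes. One then excises intervals of length less than $2M$ (suitably shrunk so overlaps cause no trouble) around these finitely many points; on the compact complement, the distance along the perpendicular until the lamination is hit again is finite and upper semicontinuous, hence uniformly bounded, which produces $d_2$; compactness then gives $\tau_1,\tau_2$, and trimming each segment so that it terminates on a leaf gives $d_1$ from the lower measure bound. Separately, in part (2) your plan follows the same compactness scheme as the paper, but the decisive point --- why a limiting configuration cannot consist of two leaves sharing an endpoint at infinity --- is left to an unexplained ``genericity choice of $\tau^\ast$.'' The actual reason is that a segment joining two asymptotic leaves is contained in a complementary region and so has transverse measure zero, contradicting the uniform lower bound $\tau_1$, which survives in the limit by semicontinuity of the transverse measure. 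Without the finiteness statement in (1) and this zero-measure fact in (2), the proof does not go through.
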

\begin{proof}
For \ref{item: constants}, if $x$ lies on a leaf that is not the boundary of a component of $\mathbb{H}^2-\tilde{\mathcal{L}}$, then any segment starting at $x$ and perpendicular to the leaf has a positive transverse measure. Otherwise, the segment may lie completely inside a complementary region.

There are only finitely many points like $x'$ where the geodesic orthogonal to $x'$ lie completely inside a complementary region.
%For points bounded away from these finitely many points, the geodesic orthogonal to $x$ going into the region will intersects the lamination again.
Thus we may exclude some intervals on the boundary of these regions containing these finitely many points of length less than $2M$, so that the geodesic orthogonal to points outside of these intervals 
%going into the region will intersects the lamination again; as a matter of fact, it 
intersects the lamination in distance bounded above by a uniform constant.

% Now since the complementary region is lifts of finitely many components of $X-\mathcal{L}$,
We conclude that there exists $d_2>0$ so that for any point $x$ on a leaf, we can choose a point $x_0$ on the same leaf distance $\le M$ away from $x$ so that any geodesic segment starting at $x_0$, perpendicular to the leaf, and of length $d_2$ has positive transverse measure.

By compactness, orthogonal geodesic segments of length $d_2$ have positive transverse measure between $\tau_1$ and $\tau_2$.

Finally, by cutting off the end part contained completely in a complementary region, we 
%get geodesic segments for each starting point with the same transverse measure and end points on a leaf. 
assume the endpoint is on a leaf. The lengths of these modified segments are bounded below by $d_1>0$, as the transverse measure is bounded below. This concludes \ref{item: constants}.

For \ref{item: cross_ratio}, suppose otherwise. Then there exists a sequence of points $x_i$ with geodesic segments $L_i$ starting at $x_i\in\tilde{\mathcal{L}}$ and ending at $y_i\in\tilde{\mathcal{L}}$ so that the cross ratio tends to $0$ or $\infty$. Passing to a subsequence and replace $x_i$ by an element in the orbit $\pi_1(X)\cdot x_i$ if necessary, we may assume $x_i\to x_0\in\mathcal{L}$, and $y_i\to y_0\in\mathcal{L}$. It is clear the corresponding cross ratio also tends to the limit cross ratio. In particular, by assumption, the leaves which $x_0$ and $y_0$ lie on share a point at infinity. But this means that the geodesic segment $x_0y_0$ is contained in a complementary region and thus has transverse measure $0$. This contradicts the fact that the transverse measure of these segments is bounded below.
\end{proof}
For simplicity, we will call a geodesic segment described in the previous lemma a \emph{connecting segment}. We are now in a position to give a proof of Theorem~\ref{sublinear}:
\begin{proof}(of Theorem~\ref{sublinear})
Let $\{d_n\}$ be a sequence of positive numbers so that $f(d_1+\cdots+d_n)=n$. By assumption $d_n\to\infty$ as $n\to\infty$. Start with any point $p$ on a leaf of the lamination $\tilde{\mathcal{L}}$. Choose a direction and go along the leaf for a distance of $d_1$, and then turn to the left and then go along a connecting segment predicted by Lemma~\ref{lm:jump}. One may need to go forward or back along the leaf for a distance $\le M$ first, as per Lemma~\ref{lm:jump}. At the end of the segment, turn right and go along the new leaf for a distance of $d_2$, and repeat the process; See the red trajectory in Figure~\ref{fig:construction} for a schematic picture.
\begin{figure}[ht!]
    \centering
    \includegraphics{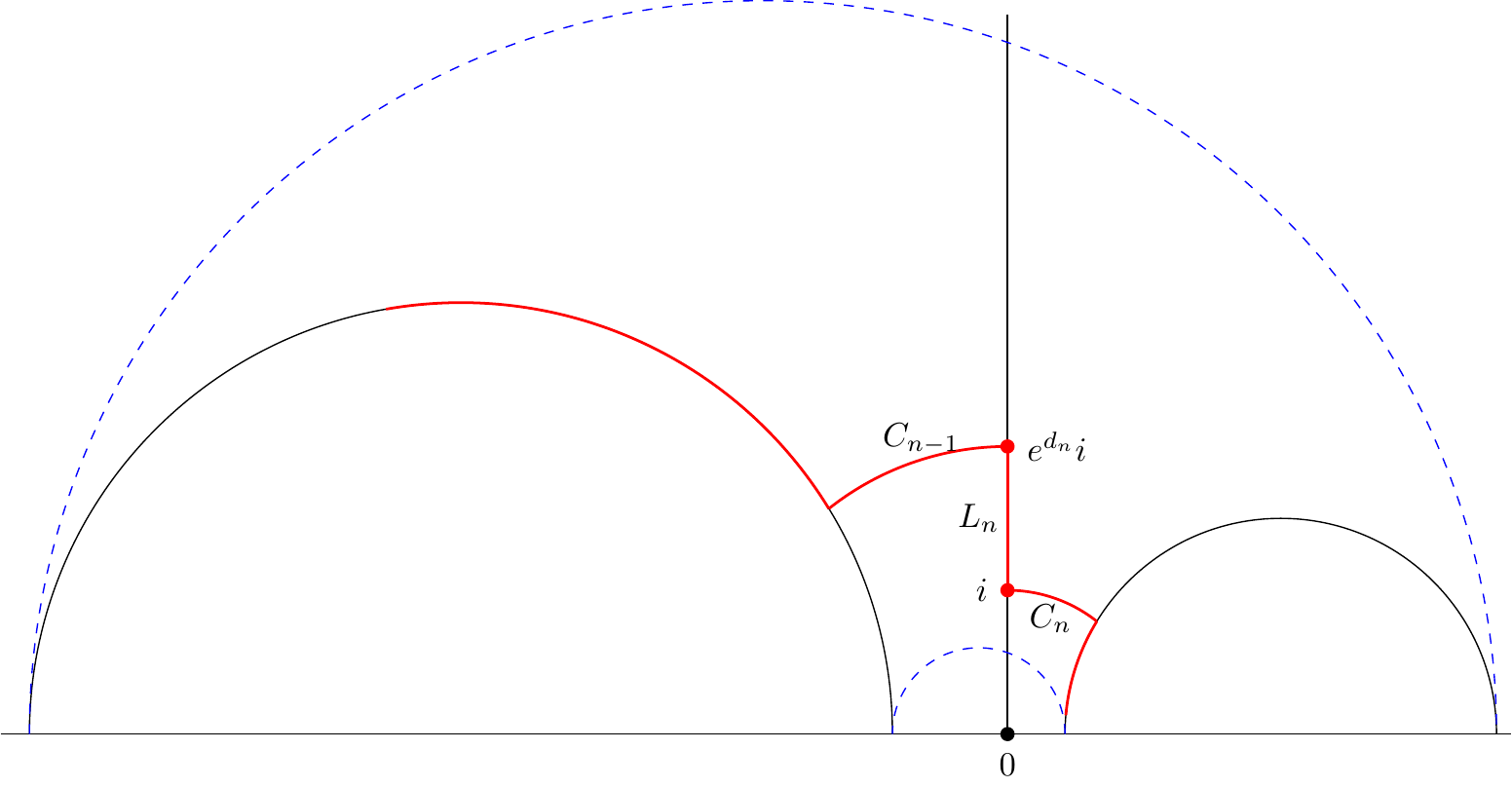}
    \iffalse
    \begin{tikzpicture}[scale=1.5]
    \draw (-7,0)--(3.5,0);
    \draw (0,0)--(0,5);
    \draw (0.4,0) arc (180:0:1.5);
    \draw (-0.8,0) arc (0:180:3);
    \draw[thick, red] (0,1) arc (90:51.607:1);
    \draw[thick, red] (0,1)--(0,2);
    \draw[thick, red] (0,2) arc (90:128.393:2);
    \draw[thick, red] (0.6210526315789474,0.7837688618520541) arc (148.499:175:1.5);
    \draw[thick, red] (-1.24211,1.56754) arc (31.5009:100:3);
    \draw[dashed, blue] (-6.8,0) arc (180:0:5.1);
    \draw[dashed, blue] (0.4,0) arc (0:180:0.6);
    
    \node[circle,fill, inner sep=1.5pt, label=below:$0$] at (0,0) {};
    \node[circle, fill, inner sep=1.5pt, label=left:$i$, red] at (0,1) {};
    \node[circle, fill, inner sep=1.5pt, label=right:$e^{d_n}i$, red] at (0,2) {};
    \node at (-0.2,1.5) {$L_n$};
    \node at (0.3,0.8) {$C_n$};
    \node at (-0.6,2.05) {$C_{n-1}$};
    \end{tikzpicture}
    \fi
    \caption{Construction of the geodesic ray}
    \label{fig:construction}
\end{figure}

It is clear that the transverse measure of this piecewise geodesic ray has the desired growth rate. It remains to show that straightening the geodesic does not change this growth rate.

We use upper half plane model to show it. Assume a leaf is the imaginary axis, and one end point of a connecting segment is at $i$, then the leaf containing the other end point is a half circle whose end points are bounded away from $0$ and $\infty$ by Part \ref{item: cross_ratio} of Lemma~\ref{lm:jump}, and also bounded away from each other as the length of the connecting segment is bounded above. %Similarly, if we put the end point of a connecting segment at $i$ and the corresponding leaf on the imaginary axis, the leaf containing the starting point is then a half circle whose end points are bounded away from $0$ and $\infty$, and are bounded apart. 
Choose constants $0<m_1<m_2$ so that the end points of the circles in either description above lies between distance $m_1$ and $m_2$ from the origin.

Now return to our construction. Denote the $n$-th segment on the leaf by $L_n$, and $n$-th connecting segment by $C_n$. Note that the length of $L_n$ is roughly $d_n$, up to a bounded constant $M$. Put $L_n$ on the imaginary axis, ending at $i$; see Figure~\ref{fig:construction}. The starting point of $L_n$ is at $e^{d_n}i$. Hence the circle on the left half plane (i.e. the leaf $L_{n-1}$ lies on) has both ends contained in the interval $[-m_2e^{d_n},-m_1e^{d_n}]$. After straightening, the geodesic will cross both circles in Figure~\ref{fig:construction}, and intersect the imaginary axis at $(0,y_n)$. The extreme cases are:
\begin{itemize}[topsep=0mm, itemsep=0mm]
\item The straightened geodesic intersects the circle on the left very far into the left end points, and the circle on the right very far into the right end points (see the larger dotted blue circle in Figure~\ref{fig:construction}), and hence $y_n\le\sqrt{e^{d_n}m_2\cdot m_2}\le e^{d_n}m_2$.
\item The straightened geodesic intersects the circle on the left very far into the right end points, and the circle on the right very far into the left end points (see the smaller dotted blue circle in Figure~\ref{fig:construction}), and hence $y_n\ge\sqrt{e^{d_n}m_1\cdot m_1}\ge m_1$.
\end{itemize}
Therefore the straightened geodesic intersects the leaf containing $L_n$ within a bounded distance ($\le\max\{|\log m_2|,|\log m_1|\}$) of $L_n$.

The leaves containing $L_n$'s and the connecting segments cut the straightened geodesic into segments. Each of these segments starts or ends on a leaf. We have the following possibilities
\begin{enumerate}[topsep=0mm, itemsep=0mm]
    \item The segment starts at a point on $L_n$ close to the end point of $L_n$. This segment then must end on $C_n$, or a point on the leaf containing $L_{n+1}$, outside $L_{n+1}$ in the backward direction along the leaf. Either way this segment has bounded length, so we can safely ignore the segment for the purpose of growth rates.
    \item The segment ends at a point on $L_n$ close to the starting point of $L_n$. Similarly, This segment then must start on $C_{n-1}$, or a point on the leaf containing $L_{n-1}$, outside $L_{n-1}$ in the forward direction along the leaf. Again this segment has bounded length.
    \item Otherwise, using triangle inequality, we conclude that this segment is $\asymp$ the portion on the piecewise geodesic ray we constructed between the same endpoints.
\end{enumerate}
From these observations we conclude that the straightened geodesic ray also has the prescribed growth rate, as desired.
\end{proof}

\appendix
\section{Appendix: Exotic rays via quadratic differentials}\label{sec:translation}
In this appendix, we give a proof of Theorem~\ref{main1} using quadratic differentials and their foliations. This approach is similar in spirit to the one adopted in Section~\ref{sec:general_proof}, but more closely resembles the discussions in the case of a punctured torus.
\subsection{Basics of quadratic differentials}
A \emph{half translation surface} is a Riemann surface $X$ with cusps together with a nonzero quadratic differential $q$ on $X$ so that $q$ has at most simple poles at the cusps of $X$. When $q=\omega^2$, where $\omega$ is a holomorphic $1$-from, the surface is called a \emph{translation surface}. Away from zeros and poles, $|q|^{1/2}$ gives a flat metric, i.e., given a path $\gamma:[0,1]\to X$ we may evaluate its length by $\int_\gamma|q|^{1/2}$. Equivalently, a half translation surface (resp. translation surface) is given by a collection of polygons in $\mathbb{C}$ with parallel sides identified in pairs via maps of the form $z\mapsto\pm z+c$ (resp. $z\mapsto z+c$). The quadratic form $dz^2$ on $\mathbb{C}$ descends to a quadratic form on the surface, and the flat metric comes from the Euclidean metric on $\mathbb{C}$. 

Given a half translation surface $(X,q)$, consider the form $\impart q^{1/2}$, which is only well defined locally, away from zeros and poles in a simply connected domain, up to the choice of a branch of the square root. Consider the foliation of $X$ by curves tangent to vectors in the kernel of $\impart q^{1/2}$, denoted by $\mathcal{F}_q$. Moreover, $|\impart q^{1/2}|$ gives a transverse measure for $\mathcal{F}_q$. In a coordinate chart so that $q=dz^2$, the leaves of $\mathcal{F}_q$ are horizontal lines, while the transverse measure is given by $|dy|$. This is the \emph{horizontal foliation} associated to $q$. Similarly, leaves of $\mathcal{F}_{-q}$ are vertical lines in the same coordinate chart, so it is the \emph{vertical foliation} of $q$. Conversely, given a pair of transverse foliations on a topological surface, there exists a complex structure and a quadratic differential realizing the pair as its horizontal and vertical foliations.

Given a complete hyperbolic surface $X$ of finite area and a measured geodesic lamination $\mathcal{L}$ on $X$, let $\mathcal{F}$ be the corresponding measured foliation. Uniformization gives a unique complex structure on $X$. By \cite[Main Theorem]{qd&foliations}, there exists a quadratic differential $q$ on $X$ so that $\mathcal{F}_q=\mathcal{F}$. So we may as well assume our geodesic lamination comes from a quadratic differential. Of course, as deforming the hyperbolic structure on $X$ has no effect on our discussion, we may simply choose a foliation transverse to $\mathcal{F}$ and construct a quadratic differential as explained above.

Given a half translation surface $(X,q)$, consider its \emph{orientation double cover} $f:\hat X\to X$ so that $f^*q=\omega^2$ for some holomorphic $1$-form on $\hat X$. In particular, $(\hat X,\omega)$ is a translation surface. The double cover $f$ is ramified at zeros of $q$ of odd order and the simple poles. Note that any pole of $q$ pulls back to a removable singularity of $\omega$. Some other structures (flat metrics, horizontal and vertical foliations) can also be lifted to $\hat X$, but not hyperbolic metrics, as near ramification points, the map cannot be an isometry. However, if we can construct an exotic ray upstairs that is bounded away from the singularities, its image under $f$, straightened in the hyperbolic metric downstairs, also gives an exotic ray. So for our purpose, we may assume the geodesic lamination comes from a holomorphic $1$-from. In particular $\mathcal{F}_q$ is orientable; by choosing an orientation for the leaves of $\mathcal{F}_q$, we may then talk about the horizontal flow without ambiguity.

By a \emph{cylinder} in $(X,q)$ we mean a maximal embedded flat cylinder. If leaves of $\mathcal{F}_q$ divide $X$ into a collection of cylinders (which we call \emph{horizontal} cylinders), the corresponding geodesic lamination is a multicurve, consisting of simple closed geodesics homotopic to the core geodesics of the cylinders. For Theorem~\ref{main1}, the assumption excludes this possibility. Let $C$ be the union of all the horizontal cylinders in $(X,q)$. Each connected component of $X\backslash C$ corresponds to a minimal component of $\mathcal{L}$. Instead of working with $(X,q)$, we focus on a connected component $X'$ of $X\backslash C$, and note that we may represent it as a collection of polygons with some edges identified in parallel pairs (these are called \emph{inner edges}), and the others forming the boundary of $X'$ (called \emph{boundary edges}). With this in mind, for simplicity, we will refer to $X'$ as $X$.

Finally, it is easy to see that we may represent $X$ as a collection of polygons in $\mathbb{C}$, so that the vertices of the polygons are precisely the zeros of $q$ and the cusps of $X$. Indeed, this is achieved by cutting the surface along flat geodesic representatives of curves starting and ending at these points.

A \emph{saddle connection} of $\mathcal{F}_q$ is a leaf starting and ending at a zero of $q$. There are at most finitely many saddle connections on $(X,q)$. As a matter of fact, we have
\begin{lm}\label{lm:saddle}
There exists a zero $x$ of $q$ so that a leaf ending at $x$ is not a saddle connection.
\end{lm}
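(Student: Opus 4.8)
My plan is to argue by contradiction, assuming the conclusion fails and deducing that $X$ decomposes completely into horizontal cylinders, which is incompatible with $X$ being a minimal, non-cylinder component. So suppose that for every zero $x$ of $q$, every leaf ending at $x$ is a saddle connection; equivalently, every horizontal separatrix (every prong emanating from a zero) closes up on another zero. The first thing I would record is the finiteness already noted above: the horizontal separatrices are exactly the prongs at the zeros, and their total number is finite (it is the sum, over the zeros, of the order plus two); since each horizontal saddle connection consumes two prongs, there are only finitely many of them. Under our assumption, the union $\Gamma$ of all horizontal saddle connections is then a finite embedded graph whose vertices are the zeros of $q$ and whose edges are saddle connections.

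Next I would analyze the complementary regions. Each connected component $R$ of $X\setminus\Gamma$ contains no zeros in its interior (they are all vertices of $\Gamma$) and, by the standing assumption, no separatrices either, so the horizontal foliation restricts to a \emph{nonsingular} measured foliation of $R$. The key step is to show that $R$ is a horizontal cylinder. For this I would fix a short vertical transversal $J\subset R$ and consider the first-return map of the horizontal flow to $J$; since the flow preserves the area form $|q|$, Poincar\'e recurrence returns almost every leaf to $J$, and since $R$ contains no separatrices the return map has no discontinuities and is therefore a single translation of $J$. This forces the leaves through $R$ to be closed of a common length, so $R$ is an open horizontal cylinder. Hence $X$ is a finite union of horizontal cylinders, and the associated lamination is a multicurve carried by their core curves. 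This contradicts the choice of $X$: it was taken to be a component of $X\setminus C$ with $C$ the union of all horizontal cylinders, equivalently a minimal, non-cylinder component of $\mathcal{L}$, which by minimality contains no closed leaf and so cannot be a union of cylinders.

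The step I expect to be the main obstacle is the passage from ``no infinite separatrices'' to ``complete cylinder decomposition.'' This is the classical statement that the horizontal direction is \emph{completely periodic} precisely when all horizontal separatrices are saddle connections, and it is underpinned by the structure theory of measured foliations (equivalently, interval exchange transformations): a minimal component must contain a dense half-leaf, so its defining interval exchange has a discontinuity point with infinite forward orbit, i.e.\ an infinite separatrix. I would either invoke this structure theorem directly or make the recurrence argument of the previous paragraph fully rigorous --- in particular justifying that an interior regular leaf neither meets the boundary saddle connections transversally nor fails to return, so that the return map is genuinely a single interval translation rather than a nontrivial exchange. Once the cylinder decomposition is in place the contradiction, and hence the lemma, is immediate.
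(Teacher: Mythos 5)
Your proposal is correct and takes essentially the same route as the paper's proof: argue by contradiction, cut along the (finitely many) horizontal saddle connections, show each complementary piece is a horizontal cylinder, and contradict the standing assumption that $X$ is a minimal, non-cylinder component; your recurrence/first-return argument simply fills in the cylinder step that the paper asserts in one terse sentence. The one point you label ``equivalently'' --- passing from ``every leaf \emph{ending} at a zero is a saddle connection'' to ``every separatrix closes up'' --- is not automatic and is exactly the step the paper justifies by noting that each zero has equally many incoming and outgoing prongs, so that a counting argument forces every outgoing prong to be consumed by a saddle connection as well.
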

\begin{proof}
Suppose otherwise. Since there are equal amount of leaves emanating and stopping at a zero of $q$, we must have that every leaf starting or stopping at a zero is a saddle connection. Cut the surface along these saddle connections. Each connected component we obtain is compact, supports a flat metric, and foliated by leaves homotopic to its boundary, and so is a cylinder. But we have assumed that this is not the case.
\end{proof}

\subsection{Symbolic coding and construction of exotic rays}
\paragraph{Symoblic coding.}
Recall that we view $X$ as a collection of polygons with inner edges identified in parallel pairs. Label these inner edges by letters from $E=\{e_0, \ldots, e_n, \bar{e}_0, \dots, \bar{e}_n\}$, so that each pair of parallel edges are labeled by $e_i$ and $\bar{e}_i$ for some $i$. An infinite (resp. finite) path which avoids any of the singularities gives an infinite (resp. finite) word with letters in $E$, by writing down the edges it intersects in order.

Choose an edge $e$ of the polygons that is not horizontal. Given a point $P\in e$, let $T(P)$ be the first return point on $e$ along the horizontal flow. We can define $w_T(P)$ as the word corresponding to this ray starting at $P$ and ending at $T(P)$. Similarly, we can define $w_{T^n}(p)$ and $w_{T^{\infty}}(P)$, where $T^n(P)$ is the $n$-th returning point. Note that $w_{T^n}(P)$ is not well-defined if the horizontal flow starting at $P$ hits a vertex before it intersects $e$ $n$-times. There are only finitely many points at which $w_{T^n}$ is not well-defined, and they divide $e$ into a collection $\mathcal{I}_n$ of disjoint open intervals. We have

\begin{lm} \label{split}
\begin{enumerate}[label=\normalfont{(\arabic*)}, topsep=0mm, itemsep=0mm]
    \item Each interval in $\mathcal{I}_n$ is a level set for $w_{T^n}$; that is, $w_{T^n}$ gives a constant word $w$ on an interval $I$ in $\mathcal{I}_n$, and $w_{T^n}^{-1}(w)=I$;
    \item The maximal length of the intervals in $\mathcal{I}_n$ tends to zero as $n \to \infty$.
\end{enumerate}  
\end{lm}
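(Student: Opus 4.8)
The plan is to recognize $T$ as an \emph{interval exchange transformation} and to read both statements off from the dynamics of its iterates. Parametrize the transversal $e$ by the transverse measure of $\mathcal{F}_q$ (that is, by $\int|\impart q^{1/2}|$ along $e$), a quantity preserved by the horizontal flow; in this coordinate the first return map $T$ is a piecewise translation, i.e.\ an interval exchange, whose finitely many discontinuities are exactly the points of $e$ whose forward trajectory runs into a zero of $q$ before returning. The iterate $T^n$ is again an interval exchange, and by construction its maximal intervals of continuity — equivalently, the maximal intervals on which $w_{T^n}$ is defined — are precisely the members of $\mathcal{I}_n$. Their endpoints, together with the endpoints of $e$, form a finite set $E_n$, namely the points whose forward trajectory meets a singularity within the first $n$ returns, and clearly $E_n\subset E_{n+1}$.

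For part (1), the constancy of $w_{T^n}$ on each $I\in\mathcal{I}_n$ is immediate, since all points of $I$ follow the same combinatorial path, crossing the same inner edges in the same order. For the converse — that distinct intervals carry distinct words — I would \emph{develop} the trajectory into $\mathbb{C}$. Starting on the chosen copy of $e$ and flowing in the positive horizontal direction, the word $w$ prescribes, letter by letter, which gluing map to apply, and hence determines a single connected union $\Sigma_w$ of polygon copies in the plane through which the trajectory runs as a horizontal segment. The set of starting heights on the developed copy of $e$ whose horizontal line traverses all of $\Sigma_w$, crossing exactly the edges named by $w$ and meeting no vertex, is an \emph{interval}, cut out by the extreme horizontal lines through the vertices bounding $\Sigma_w$. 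Undeveloping, $w_{T^n}^{-1}(w)$ is therefore a single interval, which by maximality coincides with the member of $\mathcal{I}_n$ on which $w_{T^n}\equiv w$. Thus the word determines the interval, giving (1).

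Part (2) is the crux, and it is here that the hypothesis that $X$ contains no horizontal cylinder — equivalently, that we have passed to a minimal component, on which $\mathcal{F}_q$ is minimal — enters decisively. By the nesting $E_n\subset E_{n+1}$ it suffices to show that $E_\infty:=\bigcup_n E_n$ is dense in $e$: granting this, cover $e$ by finitely many subintervals of length $<\varepsilon$, pick a point of $E_\infty$ in each, let $N$ be the largest level at which these finitely many points appear, and observe that $E_N$ then meets every subinterval, so every member of $\mathcal{I}_N$ has length $<\varepsilon$. To see $E_\infty$ is dense, note that it contains every point of $e$ lying on a horizontal \emph{separatrix}, i.e.\ a half-leaf running into a zero of $q$ in forward time. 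By Lemma~\ref{lm:saddle} there is a zero $x$ such that the leaf into $x$ is not a saddle connection; traversed backward from $x$, this is a genuine infinite half-leaf of the minimal foliation $\mathcal{F}_q$, hence dense in $X$, and so it crosses the transversal $e$ in a dense subset — all of which lies in $E_\infty$. The main obstacle is precisely this density statement, where minimality is indispensable: a non-shrinking interval in $\mathcal{I}_n$ would develop into an embedded horizontal strip of definite width and unbounded length, which on a finite-area surface can only be a horizontal cylinder, contradicting our standing assumption.
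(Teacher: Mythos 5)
Your proof is correct, but it takes a genuinely different and far more explicit route than the paper's, which disposes of the lemma in a few lines. For part (1), the paper flows a subinterval $PQ$ of $e$ until it returns: if no vertex is hit the word is constant, and if a vertex is hit it simply \emph{asserts} that $w_T(P)\neq w_T(Q)$ (and reduces $w_{T^n}$ to $w_T$). You instead prove the equivalent statement that each level set of $w_{T^n}$ is connected, by developing the corridor of polygons determined by the word into $\mathbb{C}$; this is a different mechanism, and arguably more robust, though your phrase ``cut out by the extreme horizontal lines'' deserves one more line of justification since the polygons need not be convex: one needs a short Jordan-curve argument that the region between two developed trajectories carrying the same word stays inside the corridor, after which every intermediate height inherits that word. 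For part (2), where the paper says only ``by recurrence, any interval must hit a vertex after flowing sufficiently long,'' you identify exactly what that recurrence consists of: density in $e$ of the points lying on incoming separatrices, obtained from Lemma~\ref{lm:saddle} together with minimality of the component --- a nice reuse of a lemma that the paper proves but only deploys later, in Proposition~\ref{prop:inadmissible_general} --- backed up by the dichotomy that a never-subdivided interval sweeps out a horizontal strip of definite width and infinite length, hence a horizontal cylinder, contradicting the standing hypothesis. That dichotomy is the cleanest justification of the paper's one-liner and makes visible why the no-cylinder assumption is indispensable. What the paper's version buys is brevity; what yours buys is that each step reduces to standard translation-surface facts, with the role of minimality explicit rather than hidden in the word ``recurrence.'' Two small points to tighten: in the covering step the subintervals should have length $<\varepsilon/2$ (an interval of length $\geq\varepsilon$ need not wholly contain one of your length-$<\varepsilon$ subintervals), and your strip is a priori immersed rather than embedded, so the cylinder should be extracted by the usual argument that overlapping translates of the interval under some $T^m$ force that translation to be trivial, yielding periodic orbits.
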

\begin{proof}
Note that for (1) it suffices to prove the lemma for $w_T$; the general case then follows. Let $P,Q$ be two different points on $e$. Flow the interval $PQ$ until it returns to $e$. If it does not hit any of the vertices then $w_T$ is constant on $PQ$. Otherwise, $w_T$ is different at $P$ and $Q$. This gives (1).

For (2), note that by recurrence, any interval $PQ$ as above must hit a vertex after flowing along the horizontal foliation for a sufficiently long time. This gives (2).
\end{proof}

\paragraph{Inadmissible words.}
As in the case of punctured tori, we call a word ($\mathcal{F}_q$-)inadmissible if it does not appear in the word of any half leaf. The key idea is that there is an abundance of inadmissible words; as a matter of fact, we show that there are inadmissible words corresponding to curves with arbitrarily small transverse measure (recall that in the setting of translation surfaces with horizontal foliation, the transverse measure is given by vertical distance):
\begin{prop}\label{prop:inadmissible_general}
For all $k \in \mathbb{N}$, there exists an inadmissible word $A_k$ so that $A_k$ has a representative closed curve $C_k$ of transverse measure $<c/2^k$, where $c$ is a constant independent of $k$.
\end{prop}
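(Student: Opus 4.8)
The plan is to transport the train-track argument behind Corollary~\ref{cor:inadmissible} into the language of the first-return map $T$ on the transversal $e$, which here plays the role of the train track. Recall that there the key dichotomy was that leaf factors (admissible words) grow only polynomially, whereas closed curves carried by the track grow exponentially, so that inadmissible words must appear; moreover a minimal-length inadmissible word decomposes as an admissible prefix followed by one admissible return block, so that its entire transverse measure can be localized at a single thin junction. I would reproduce exactly this mechanism, with honest horizontal excursions of the flow replacing first-return train paths, and with a short sub-transversal replacing the thin branch $b_n$ of weight $<1/2^n$.

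Concretely, for each $k$ I first fix a short sub-interval $J_k\subset e$ of transverse width $<c/2^k$ (arbitrarily short transversals exist by Lemma~\ref{split}(2)), chosen near a discontinuity of the return map associated to the zero $x$ furnished by Lemma~\ref{lm:saddle}, so that the first-return map $T_{J_k}$ to $J_k$ is again a minimal interval-exchange with at least two continuity intervals; let $b_1,\ldots,b_r$ with $r\ge2$ be the corresponding first-return words. Each block $b_i$ is admissible, being realized by a genuine horizontal excursion, and every concatenation $b_{j_1}\cdots b_{j_s}$ is realized by \emph{some} closed curve obtained by gluing the excursions with short vertical segments inside $J_k$. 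There are at least $2^s$ such concatenations, whereas the admissible ones --- those realizing a genuine $T_{J_k}$-orbit itinerary --- number only polynomially in $s$, by the same polynomial-complexity estimate as in Proposition~\ref{prop:ent.poly} (equivalently, the linear block-complexity of an interval-exchange coding). Hence for large $s$ there is an inadmissible concatenation; I take one of minimal length, $A_k=b_{j_1}\cdots b_{j_s}$, so that both its prefix $b_{j_1}\cdots b_{j_{s-1}}$ and its final block $b_{j_s}$ are admissible.

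To build the representative $C_k$, I realize the admissible prefix by a single honest horizontal trajectory $\gamma_1$ and the block $b_{j_s}$ by a single honest trajectory $\gamma_2$; since leaves are horizontal, both $\gamma_1$ and $\gamma_2$ carry zero transverse measure. All four endpoints lie on $J_k$, so I close up by joining $\gamma_1$ to $\gamma_2$ and $\gamma_2$ back to $\gamma_1$ by two vertical segments inside $J_k$. Because vertical motion along $e$ crosses no labelled edge, the resulting closed curve has edge-word exactly $A_k$, while its entire transverse measure comes from the two gluing segments and is therefore $<2(c/2^k)$, as required. The one genuinely delicate point --- and the step I expect to be the main obstacle --- is upgrading the combinatorial inadmissibility of the itinerary $(j_1,\ldots,j_s)$ to the statement that the edge-word $A_k$ occurs in no half-leaf: a priori a leaf realizing $A_k$ might cross $J_k$ (or the rest of $e$) at times not aligned with the block boundaries. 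I would resolve this via recognizability of the first-return coding of a minimal interval-exchange, choosing $J_k$ adapted to the singular orbit of $x$ so that the $J_k$-crossings are forced by the word, which guarantees that any leaf carrying $A_k$ must follow the itinerary $(j_1,\ldots,j_s)$ --- contradicting its inadmissibility. The remaining verifications (that $r\ge2$, using that $X$ is not a single cylinder by the hypothesis of Theorem~\ref{main1} together with Lemma~\ref{lm:saddle}, and the polynomial count of admissible itineraries) are routine adaptations of Section~\ref{sec:carry}.
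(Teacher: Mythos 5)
Your architecture --- an exponential count of block concatenations against a polynomial count of admissible words, a minimal-length inadmissible concatenation, and a representative built from two honest trajectories glued by short segments --- is the mechanism of Corollary~\ref{cor:inadmissible} transplanted to a sub-transversal $J_k\subset e$, and the obstacle you flag at the end is a genuine gap, not a routine verification. The coding only records crossings of the labelled edges, and a crossing of $J_k$ produces the same letter as a crossing of $e\setminus J_k$. Consequently (i) distinct index sequences $(j_1,\dots,j_s)$ may a priori yield the same edge-word, so the number of \emph{distinct words} of the form $b_{j_1}\cdots b_{j_s}$ need not be exponential and the pigeonhole against linear block-complexity does not go through as stated; and (ii) even given an index sequence that is not a genuine $T_{J_k}$-itinerary, a leaf could still contain the word $b_{j_1}\cdots b_{j_s}$ with its $e$-crossings falling partly outside $J_k$ or with block boundaries shifted, so non-realizability of the itinerary does not imply inadmissibility of the word. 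Your proposed cure (recognizability, with $J_k$ adapted to the singular orbit) is precisely what must be proved, and it is delicate exactly where you need it: even if one takes $J_k$ to be a level-set interval of $\mathcal{I}_n$, so that membership in $J_k$ is detected by the next $n$ return words, that detection uses \emph{forward} context, which is unavailable for the last blocks of $A_k$ --- and minimality places the inadmissibility precisely at the final block.

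The paper's proof avoids counting altogether and thereby never meets this issue. It keeps the full edge $e$ as transversal, whose crossings \emph{are} marked by its own letter, so alignment is automatic. Using Lemma~\ref{lm:saddle} it chooses a discontinuity point $P$ of the return map $T$ not lying on a saddle connection, with adjacent level-set intervals $I,I'$ of $\mathcal{I}_1$; it takes $Q\in I$ with $|PQ|<1/2^k$, flows for $n$ returns, where $n$ is chosen so that $T^n(Q)$ lands between $P$ and $Q$ at distance $<a=|PQ|/3$ from $Q$ and all intervals of $\mathcal{I}_n$ have length $<a$, then jumps across $P$ into $I'$, flows again, and closes up; the total transverse measure is $<3c/2^k$. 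Inadmissibility is then immediate from the level-set property of Lemma~\ref{split}(1): any leaf containing $w_{T^n}(Q)$ must cross $e$ in the interval $w_{T^n}^{-1}(w_{T^n}(Q))$, whose length is $<a$ and whose image under $T^n$ therefore lies inside $I$, so the continuation is forced to be $w_T(I)$; but in $A_k$ the continuation is $w_T(I')\neq w_T(I)$. In short, the paper certifies a single explicitly constructed ``wrong turn at a discontinuity'' rather than extracting an unspecified inadmissible word by entropy comparison; if you try to repair your route, the alignment statement you need is exactly this level-set argument, at which point you have reproduced the paper's proof.
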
 
\begin{proof}
By Lemma~\ref{split}, $e$ is split into intervals $I_1, \dots, I_m$ by points $P_1,\ldots,P_{m-1}$, so that each $I_j$ is a level set for $w_T$ on $e$. Note that we may choose $P=P_j$ so that it does not lie on a saddle connection; indeed, Lemma~\ref{lm:saddle} guarantees the existence of a leaf ending at a zero of $q$ but not a saddle connection. Flow along this leaf backwards until it first intersects $e$, which must be among $P_j$'s. This gives a desired point.

For simplicity, let $I=I_j$ and $I'=I_{j+1}$ be the two intervals with $P$ as one of their endpoints. Let $Q$ be a point of distance $<1/2^{k}$ from $P$ inside $I$. We construct $C_k$ as follows. Start from $Q$, flow along the leaves of $\mathcal{F}_q$ until we get to the point $T^{n}(Q)$, where $n \in \mathbb{N}$ is large enough so that
\begin{enumerate}[topsep=0mm, itemsep=0mm]
    \item Intervals in $\mathcal{I}_n$ (see Lemma~\ref{lm:saddle}) have length less than $a=|PQ|/3$;
    \item $T^n(Q)$ lies between $P$ and $Q$ and the distance between $T^n(Q)$ and $Q$ is less than $a$. 
\end{enumerate} 
Then move along $e$ and stop at a point of distance less than $1/2^k$ from $P$, inside $I'$. Flow along the horizontal flow and stop at a point of distance less than $1/2^k$ from $Q$. Move horizontally to $Q$. This gives a closed curve $C_k$.

The transverse measure $C_k$ is less than 
$$c\left(a+\frac{1}{2^{k}}+\frac{1}{2^{k}}\right) < \frac{3c}{2^{k}},$$
where $c$ depends on the slope of $e$. This closed curve corresponds to a word $A_k$. In order to prove $A_k$ is inadmissible, we show that any admissible word starting with $w_{T^n}(Q)$ is followed by $w_T(Q)=w_T(I)$ afterwards. Indeed, note that the interval $w^{-1}_{T^n}(w_{T^n}(Q))$ has length less than $a$, so its image under $T^n$ lies inside $I$. On the other hand, in $A_k$ after $w_{T^n}(Q)$ the word $w_T(I')$ follows. Therefore $A_k$ is inadmissible.   
\end{proof}

\paragraph{Constructing exotic rays.}
We are in a position to prove Theorem~\ref{main1}, and the construction itself is similar to that in the case of punctured tori. However, there are some issues arise, see Lemma~\ref{lm:close} and Proposition~\ref{prop:tail}.

\begin{proof}(of Theorem ~\ref{main1}) Proposition~\ref{prop:inadmissible_general} gives for each $k$ an inadmissible word $A_k$ represented by a curve $C_k$ based a point $Q=Q_k$ of distance $<1/2^k$ from $P$. Consider the concatenation $w=A_1A_2\cdots$. This infinite word is represented by a piecewise ray starting at $Q_1$. The ray goes along $C_1$ until it gets back to $Q_1$ and then moves along $e$ to $Q_2$ (the distance moved is $<1/2$), and then continues along $C_2$ and so on.

The transverse measure of this ray is $\leq$ transverse measures of $C_k$'s + transverse measures of the moves along $e$, which is
$$< c\sum _{i=1}^{\infty} \frac{1}{2^k}+ c'\sum_{i=1}^{\infty} \frac{1}{2^k}<\infty.$$
Furthermore, since any tail of $w$ contains an inadmissible word, Lemma~\ref{lm:close} and Proposition~\ref{prop:tail} imply that the ray is not asymptotic to any leaf. To get uncountably many, a similar argument to that in \S\ref{sec:general_proof} suffices.
\end{proof}
In the case of punctured tori, the unit square with four vertices removed gives a fundamental domain whose boundary is geodesic both in the flat metric and the hyperbolic metric. Moreover, straightening an infinite ray with respect to the hyperbolic metric does not change its word, as the corresponding homotopy does not pass through the vertices, as the cusps are infinite distance away. These are no longer true for the general case, so we need careful analysis of our construction.

First, note that after straightening, each zero of $q$ blows up to an ideal quadrilateral. Choose an point in each quadrilateral, and connect these points with hyperbolic geodesic segments, so that these segments are homotopic rel end points to the flat geodesics we cut up to obtain the polygonal representation of $(X,q)$. Cutting the surface along these hyperbolic geodesic segments, we obtain a fundamental domain for the hyperbolic surface; hyperbolic elements corresponding to gluing pairs of sides of this fundamental domain give essentially the same coding system as polygons in flat metric. Moreover, the word for a leaf of $\mathcal{F}_q$ with respect to the horizontal flow in the polygonal representation is exactly the same as the word for the straightened leaf with respect to this new fundamental domain. We will call these chosen points and the cusps of $X$ \emph{singularities}.

In the proof of Theorem~\ref{main1} above, the constructed ray consists of segments along the horizontal flow and along $e$. We may thus reconstruct this ray in the hyperbolic setting by going accordingly along straightened leaves and along the geodesic segment corresponding to $e$. We have:
\begin{lm}\label{lm:close}
\begin{enumerate}[topsep=0mm, itemsep=0mm, label=\normalfont{(\arabic*)}]
    \item The straightened ray is eventually in the $\epsilon$-neighborhood of the geodesic lamination $\mathcal{L}$ for any $\epsilon>0$;
    \item Straightening the ray in the proof above does not change the tail of its word.
\end{enumerate}
\end{lm}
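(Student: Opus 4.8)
The plan is to prove the two assertions in order, deriving (2) from (1); the quantitative content of (1) is the heart of the matter. Write $\rho$ for the piecewise-flat ray built in the proof above and $\gamma$ for its straightening, i.e.\ the hyperbolic geodesic ray with the same endpoint at infinity. By Proposition~\ref{prop:point_at_infinity}, $\gamma$ also has finite transverse measure. The structural input I would use is the geometry of the construction: for large $k$ the block $C_k$ has transverse measure $<c/2^k$, so the corresponding portion of $\rho$ is trapped in a flat horizontal band $B_k$ whose total transverse measure tends to $0$. In particular $\rho$ runs along leaves of $\mathcal{F}_q$ that stay within vanishing transverse distance of the fixed recurrent leaf through $P$ (recall $P$ was chosen off every saddle connection via Lemma~\ref{lm:saddle}), and each such leaf straightens to a leaf of $\mathcal{L}$. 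Since the whole construction takes place inside the component $X'$, which is bounded away from the cusps and from the other minimal components, neither $\rho$ nor $\gamma$ makes an excursion toward a cusp.

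The crux of (1) is to convert \emph{small transverse measure} into \emph{small hyperbolic distance to $\mathcal{L}$}. The mechanism is that $\gamma$ can stray a hyperbolic distance $\ge\epsilon$ from $\mathcal{L}$ only by cutting deeply across one of the finitely many complementary ideal polygons, that is, by rounding a zero of $q$ along a loop of definite size; but a band $B_k$ of transverse measure $<c/2^k$ forces the leaves it follows to skirt each zero ever more tightly as $k\to\infty$, so the straightening can only shave shallower and shallower corners off those polygons. Making this precise — bounding the depth of such a corner by the transverse width of the band near a blown-up zero, uniformly over the finitely many singularity types — is the main technical obstacle. I expect to handle it by a compactness argument on the flat structure together with the Morse (fellow-traveling) lemma applied to the leaves of $\mathcal{F}_q$, regarded as quasigeodesics whose quasigeodesic constants improve to $(1,0)$ as the band narrows. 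This yields $d(\gamma(t),\mathcal{L})\to 0$, which is exactly (1).

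For (2) I would then argue that straightening cannot change the tail of the word. The word of a ray is the sequence of inner edges it crosses, and these edges join the \emph{singularities}, namely the marked points in the blown-up ideal quadrilaterals together with the cusps of $X$. Each marked point sits in the interior of a complementary polygon and each cusp is infinitely far, so every singularity lies at a definite positive distance $\epsilon_0$ from $\mathcal{L}$. By part (1), both $\rho$ and $\gamma$ eventually lie in the $\epsilon$-neighborhood of $\mathcal{L}$ for every $\epsilon$; taking $\epsilon<\epsilon_0$ and lifting to $\mathbb{H}^2$, the two rays $\widetilde\gamma$ and $\widetilde\rho$ converge to the common point $\xi$ and hug $\widetilde{\mathcal{L}}$, so in their tails they cross exactly the same leaves of $\widetilde{\mathcal{L}}$ and no lift of a singularity can lie in the region swept by the straightening homotopy between them. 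A homotopy disjoint from the singularities preserves the sequence of inner-edge crossings, so $\rho$ and $\gamma$ record the same tail of the word, proving (2).

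Finally, I would observe that (2) is precisely the hypothesis needed to pass the $\mathcal{F}_q$-inadmissibility of every tail of $w$ (Proposition~\ref{prop:inadmissible_general}) to the straightened geodesic, so that Proposition~\ref{prop:tail} applies and shows $\gamma$ is asymptotic to no leaf; combined with its finite transverse measure this exhibits $\gamma$ as exotic. I expect the single genuinely delicate point of the whole argument to be the depth-versus-transverse-width estimate near a blown-up zero in the proof of (1); everything else is bookkeeping once that uniform bound is in hand.
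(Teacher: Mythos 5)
Your skeleton agrees with the paper's at two points: you obtain finite transverse measure for the straightened ray $\gamma$ from Proposition~\ref{prop:point_at_infinity}, and your derivation of (2) from (1) (singularities lie at a definite distance $\epsilon_0$ from $\mathcal{L}$, so once the rays hug $\mathcal{L}$ the edge-crossing sequence is preserved) is essentially the paper's. But for part (1), which you yourself call the heart of the matter, you have not given a proof: the ``depth-versus-transverse-width estimate'' you defer to a hoped-for compactness argument plus a Morse lemma \emph{is} the content of the lemma, and the mechanism you propose is not on track to deliver it. To apply the Morse lemma you would need the piecewise ray $\rho$ (long leaf segments joined by small jumps) to be a uniform quasigeodesic; that is not automatic for concatenations of geodesic segments with short jumps, and in this paper such statements are exactly what require the cross-ratio control of Lemma~\ref{lm:jump}, part (2), and the endpoint analysis in the proof of Theorem~\ref{sublinear}. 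Moreover, fellow-traveling at a fixed constant $D$ would only place $\gamma$ in a fixed $D$-neighborhood of $\mathcal{L}$, whereas (1) asserts containment in the $\epsilon$-neighborhood for \emph{every} $\epsilon>0$; your claim that the quasigeodesic constants ``improve to $(1,0)$ as the band narrows'' is precisely what is unproved.

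Your band-narrowing heuristic also misses the failure mode the paper isolates as the genuine difficulty. When a complementary region of $\mathcal{L}$ is an ideal polygon with more than three sides, a geodesic can cross it deeply by running close to a diagonal, meeting the boundary leaves at arbitrarily small angles and hence picking up arbitrarily little transverse measure; so neither finiteness of $I(\mathcal{L},\gamma)$ nor narrowness of your bands $B_k$ excludes such excursions. The paper's proof has two ingredients your proposal lacks: for triangle regions, a deep excursion forces a definite angle with a boundary leaf, which by the compactness argument of Proposition~\ref{prop:leaf} forces a definite amount of transverse measure, so finite total measure permits only finitely many excursions (an angle-to-measure conversion you never make); for larger polygons, the diagonal case is excluded using the structure of the construction --- since $P$ avoids saddle connections (Lemma~\ref{lm:saddle}), all crossings of complementary regions are eventually tip crossings, and because $\rho$ travels an arbitrarily long distance along a leaf before each jump, one ideal endpoint of the straightened ray stays bounded away from the endpoints of the dangerous diagonal. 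A smaller gap in (2): you argue no lifted singularity lies in the region swept by the straightening homotopy because the two rays share an ideal endpoint and both eventually lie $\epsilon$-close to $\widetilde{\mathcal{L}}$; but since $\rho$ is not geodesic, sharing an endpoint at infinity does not make the tails uniformly close, so this step also needs the paper's tip-crossing analysis rather than proximity alone.
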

\begin{proof}
We first prove (1) assuming $\mathcal{L}$ is maximal, and $X$ does not have any boundary component (recall that we restrict our attention to a subsurface containing $\mathcal{L}$, so $X$ may have boundary components). In this case, each complementary region is an ideal triangle, and the complement of $\epsilon$-neighborhood of the geodesic lamination consists of a finite number of domains like the gray region in Figure~\ref{fig:epsilon_neighborhood}.
\begin{figure}[ht!]
\centering
\begin{minipage}[b]{0.48\textwidth}
    \centering
    \includegraphics{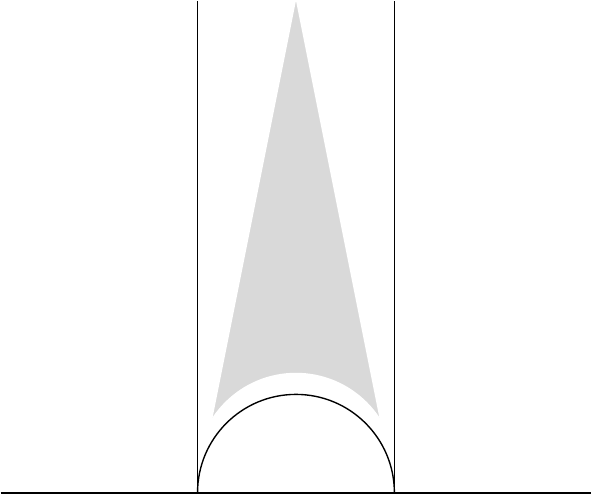}
    \iffalse
    \begin{tikzpicture}
    \begin{scope}
    \clip (-1,0)--(1,0)--(1,5)--(-1,5)--cycle;
    \fill[gray, opacity=0.3] (-1,0)--(1,0)--(1,5)--(-1,5)--cycle;
    \fill[white] (-1,0)--(0,5)--(-2,5.1)--(-2,0)--cycle;
    \fill[white] (1,0)--(0,5)--(2,5.1)--(2,0)--cycle;
    \fill[white] (0,0.2) circle (1.01980390272);
    \end{scope}
    \draw (-3,0)--(3,0);
    \draw (-1,0)--(-1,5);
    \draw (1,0)--(1,5);
    \draw (1,0) arc (0:180:1);
    \end{tikzpicture}
    \fi
    \caption{Complement of $\epsilon$-neighborhood of $\mathcal{L}$}
    \label{fig:epsilon_neighborhood}
\end{minipage}
\begin{minipage}[b]{0.48\textwidth}
    \centering
    \includegraphics{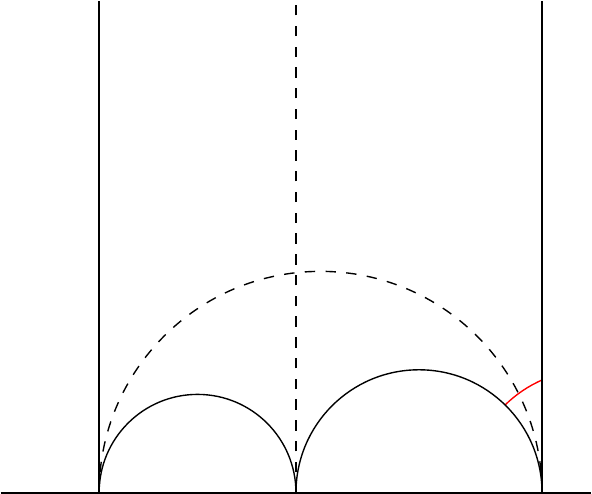}
    \iffalse
    \begin{tikzpicture}
    \begin{scope}
    \clip (-1,0)--(3.5,0)--(3.5,5)--(-1,5)--cycle;
    \draw[red] (4,0) circle (1.25);
    \fill[white] (0,0) circle (1);
    \fill[white] (2.25,0) circle (1.25);
    \end{scope}
    \draw (-2,0)--(4,0);
    \draw (-1,0)--(-1,5);
    \draw[dashed] (1,0)--(1,5);
    \draw (1,0) arc (0:180:1);
    \draw (1,0) arc (180:0:1.25);
    \draw[dashed] (-1,0) arc (180:0:2.25);
    \draw (3.5,0)--(3.5,5);
    \end{tikzpicture}
    \fi
    \caption{A quadrilateral complementary region}
    \label{fig:quadrilateral}
\end{minipage}
\end{figure}

If a geodesic ray passes through the gray area, it then intersects two of edges of the ideal triangle. Moreover, at least one of the nonobtuse angles formed by the ray and the edges is bounded below by a positive constant depending only on $\epsilon$. A compactness argument as in the proof of Proposition~\ref{prop:leaf} then implies that the segment of length $L$ outside the ideal triangle on the geodesic ray with one end at the vertex of that angle has transverse measure bounded below by a positive constant depending only on $\epsilon$ and $L$. Since the ray we constructed has finite transverse measure, it must eventually avoid the gray region.

In general, a component of $X\backslash\mathcal{L}$ may be an ideal polygon with more than $3$ edges. The argument above only implies that the geodesic ray eventually either stays in an $\epsilon$-neighborhood of $\mathcal{L}$ or get closer to a diagonal of the polygon (see Figure~\ref{fig:quadrilateral}; a geodesic ray very close to a diagonal there intersects the edges in angles that can be arbitrarily small). To exclude such a possibility, we look at the construction in the proof of Theorem~\ref{main1}.

Recall that the ray constructed in Theorem~\ref{main1} consists of (usually very long) segments along the leaves and (relatively short) jumps along $e$. From the flat picture to the hyperbolic picture, each point where $w_{T^\infty}$ or $w_{T^{-\infty}}$ is undefined ``blows up" to a segment cross a complementary region. For simplicity, we will call such a point \emph{singular}. The length of these segments $\to 0$ for any infinite sequence of them. In particular, they must eventually be a segment across a tip of a ideal polygon (e.g. the red segment in Figure~\ref{fig:quadrilateral}). Moreover, recall that the fixed point $P$ is chosen so that it does not lie on a saddle connection, so it also blows up to a segment across a tip (indeed, the backward flow at $P$ does not hit any singularities). Thus by discarding a starting portion, we may assume that all singular points the ray crosses on $e$ blow up to segments across a tip.

Straightening the ray does not change how it intersects each complementary region. For each tip crossing segment, the two edges forming the tip have two end points not shared by both of them, and the geodesic determined by those two end points is the only diagonal the crossing segment could possibly be close to after straightening. But this is not possible, as one of the end points of the straightened geodesic lies bounded distance away from the two end points mentioned above. Indeed, Figure~\ref{fig:bounded_away} illustrates this: going from left to right in the , we flow along a segment $L$ on a leaf then jump along the edge $e$, and then go along a leaf again. Note that the length of $L$ tends to infinity, and each jump along $e$ has length bounded above. So the starting point on the left of the picture is bounded away from the complementary region $R$. In particular, even after straightening, the geodesic ray cannot be close to the diagnal of $R$.
\begin{figure}[ht!]
\centering
\includegraphics{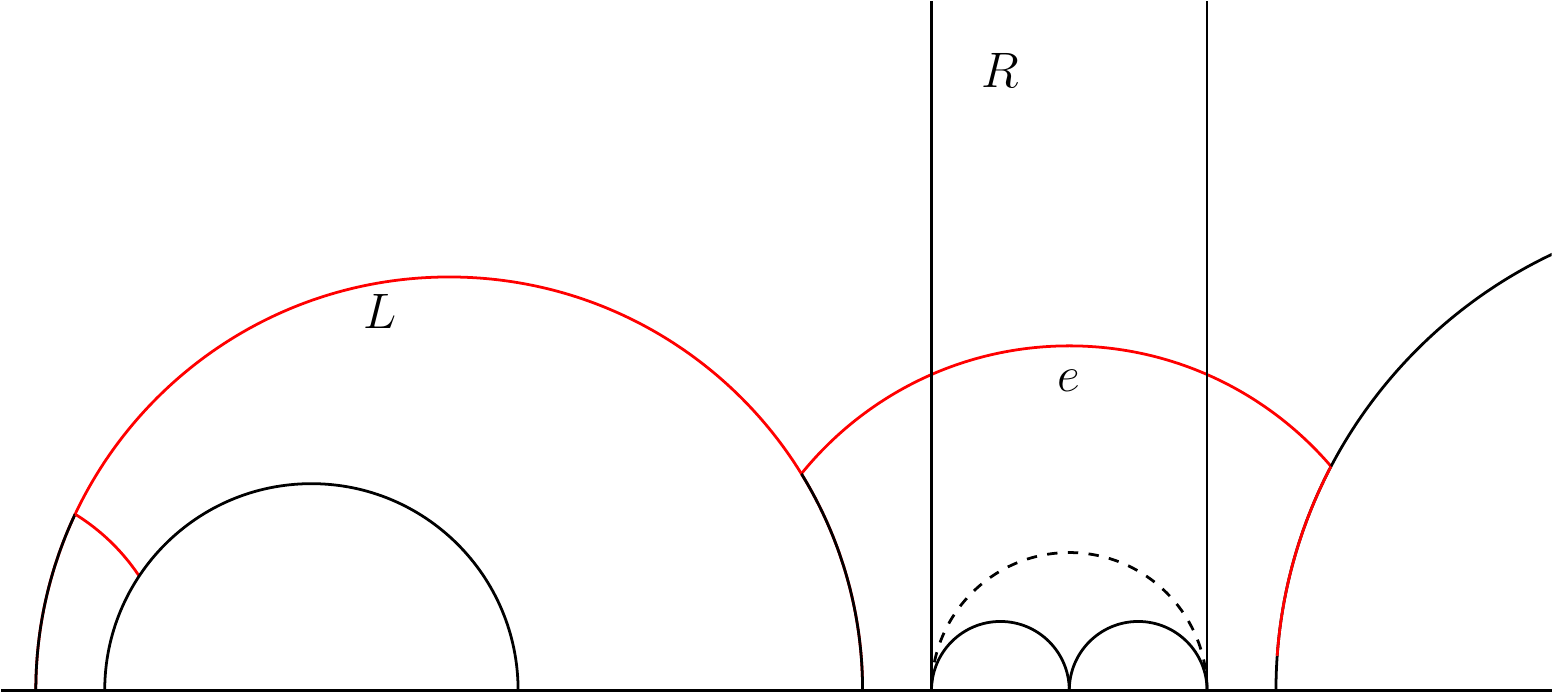}
\iffalse
\begin{tikzpicture}[scale=0.7, thick]
\begin{scope}
\clip (-13.5,0)--(9,0)--(9,10)--(-13.5,10);
\draw[red] (2,0) circle (5);
\draw[fill=white] (12,0) circle (7);
\draw[red, fill=white] (-7,0) circle (6);
\begin{scope}
\clip (2,0) circle (5);
\draw (-7,0) circle (6);
\end{scope}
\begin{scope}
\clip (-14,0) circle (3);
\draw (-7,0) circle (6);
\end{scope}
\begin{scope}
\clip (-7,0) circle (6);
\draw[red] (-14,0) circle (3);
\draw [fill=white] (-9,0) circle (3);
\end{scope}
\begin{scope}
\clip (2,0) circle (5);
\draw [red] (12,0) circle (7);
\fill [white] (5,0) circle (0.5);
\end{scope}
\begin{scope}
\clip (5,0) circle (0.5);
\draw (12,0) circle (7);
\end{scope}
\end{scope}
\draw (-13.5,0)--(9,0);
\draw (0,0)--(0,10);
\draw (4,0)--(4,10);
\draw (0,0) arc (180:0:1);
\draw (2,0) arc (180:0:1);
\draw[dashed] (4,0) arc (0:180:2);

\node at (2,4.5) {\Large $e$};
\node at (-8,5.5) {\Large $L$};
\node at (1,9) {\Large $R$};
\end{tikzpicture}
\fi
\caption{After straightening, the ray is still far way from the dashed diagonal}
\label{fig:bounded_away}
\end{figure}
The argument is similar when $X$ has a boundary component. This gives (1).

For (2), note that any crossing stays in an $\epsilon$-neighborhood of $\mathcal{L}$ after straightening, so it doesn't cross any singularities.
\end{proof}

\begin{prop}\label{prop:tail}
Suppose two geodesic rays are bounded away from any singularities. If they are asymptotic then the corresponding words have the same tail.
\end{prop}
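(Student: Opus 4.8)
The plan is to lift both rays to $\mathbb{H}^2$ so that they converge to a common ideal point $Q$, and then to analyze the thin region they bound near $Q$. Write $\tilde r_1,\tilde r_2$ for lifts converging to $Q$. If they lie on the same complete geodesic, then one is eventually a sub-ray of the other and the conclusion is immediate, so assume they are distinct. Two distinct complete geodesics of $\mathbb{H}^2$ sharing the ideal endpoint $Q$ are disjoint, hence $\tilde r_1$ and $\tilde r_2$ are disjoint. Let $P_i=\tilde r_i(0)$ and let $\sigma=\overline{P_1P_2}$ be the geodesic segment joining the base points; then $\tilde r_1$, $\tilde r_2$ and $\sigma$ bound a region $T$ whose only ideal point is $Q$ (a ``triangle'' with two finite vertices and one ideal vertex). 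Because both rays are bounded away from the singularities, there is a $\delta>0$ with $d(\tilde r_i,s)\ge\delta$ for every lift $s$ of a singularity; in particular $Q$ is not a cusp.

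Next I would show that, outside a compact set, $T$ contains no singularity and meets no edge that touches $\sigma$. Every singularity lying in $T$ is at distance $\ge\delta$ from both $\tilde r_1$ and $\tilde r_2$; since asymptotic rays satisfy $d(\tilde r_1(t),\tilde r_2)\to0$, the subset of $T$ at distance $\ge\delta$ from both boundary rays is bounded, hence compact, and therefore contains only finitely many singularity lifts. Likewise, by local finiteness of the tiling only finitely many edges meet the compact segment $\sigma$, and each such edge is itself bounded. Thus there is a compact set $K$ so that the ``thin'' end $T'=T\setminus K$ reaching toward $Q$ contains no singularity lift and no point of any edge that crosses $\sigma$.

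The key step is then to observe that in $T'$ every edge crossing one boundary ray must cross the strip completely. Let $E$ be an edge whose crossing with $\tilde r_1$ occurs in $T'$. Since an edge and a geodesic meet at most once, $E$ cannot recross $\tilde r_1$; its endpoints are lifts of singularities (finite ones, or ideal cusp points), and since the only ideal point of $T$ is $Q$, which is not a cusp, no endpoint of $E$ lies in the closure of $T'$, so $E$ does not terminate inside $T'$; and $E$ meets no point of $\sigma$ in $T'$. Hence, following $E$ from its crossing point into $T$, it must exit through $\tilde r_2$, so $E$ crosses $\tilde r_2$ as well, and symmetrically for edges crossing $\tilde r_2$. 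Consequently, past a suitable parameter, the edges crossed by $\tilde r_1$ are exactly the edges crossing $T'$ from $\tilde r_1$ to $\tilde r_2$, which are precisely the edges crossed by $\tilde r_2$.

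Finally I would match the order of the crossings. The edges crossing $T'$ are pairwise disjoint there, since distinct edges meet only at vertices and $T'$ contains none; they thus form a family of disjoint arcs, each joining $\tilde r_1$ to $\tilde r_2$ across the topological strip $T'$. Such a family is linearly ordered, and both boundary curves meet the arcs in this same order, so $\tilde r_1$ and $\tilde r_2$ cross these edges in the same order. This is exactly the assertion that their words share a common tail. The main obstacle is the third step, ruling out an edge that lingers inside the strip without crossing it: this is where boundedness away from the singularities (to keep edge endpoints and cusps out of $T'$), the at-most-one-intersection property of geodesics, and the convergence of asymptotic rays (to confine the singularities of $T$ to a compact set) must all be combined, and where the presence of edges with ideal endpoints and the possibly non-convex shape of $T$ demand the care described above.
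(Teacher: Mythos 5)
Your proposal is correct and follows essentially the same route as the paper's (much terser) proof: lift both rays to a common ideal endpoint, use the hypothesis that they are bounded away from singularities together with the asymptotic convergence to confine singularity lifts between the rays to a compact set, and conclude the crossed edges coincide past that point. Your write-up simply fills in the details (why edges crossing the thin strip must traverse it completely, and why the crossing orders agree) that the paper leaves implicit.
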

\begin{proof}
Indeed, if two geodesic rays are asymptotic, some lifts of them share an end point. Since they are bounded away from singularities, there are at most finitely many lifts of singularities between the two lifts. They must thus share the same word after certain point.
\end{proof}
\begin{rmk}\label{rmk:same}
The approach described in this section produces very much the same family of exotic rays as in Section~\ref{sec:general_proof}. Indeed, since we showed that the exotic rays constructed in this section eventually fall into $\epsilon$-neighborhoods of the lamination, they are carried by the corresponding train tracks. The approach with translation surfaces is constructive and easily programmable, as in the punctured tori case, but we need to show that the straightened geodesic ray is bounded away from the singularities. For the approach with train tracks, this is baked into the construction: a train track carrying the geodesic lamination $\mathcal{L}$ can be homotoped into a neighborhood of $\mathcal{L}$. However, the inadmissible words are chosen in a nonconstructive way.
\end{rmk}

\bibliographystyle{math}
\bibliography{biblio}
\end{document}